%\NeedsTeXFormat{LaTeX2e}
%\documentclass{amsart}
%\usepackage{amssymb}
%\usepackage{amscd}
%\usepackage[latin1]{inputenc}
%\usepackage{amsmath}
%\usepackage{amsfonts}
%\usepackage{latexsym}
%\usepackage{verbatim}
%\usepackage{graphicx}
%\usepackage{epsfig}
%%\usepackage{draftwatermark}

\documentclass[notheoremnums]{dpreprint}

\newtheorem{theorem}{Theorem}
\newtheorem{lemma}{Lemma}[section]
\newtheorem{corollary}[lemma]{Corollary}

\newtheorem{proposition}[lemma]{Proposition}

\theoremstyle{definition}
\newtheorem{definition}[lemma]{Definition}
\newtheorem{example}[lemma]{Example}
\newtheorem{remark}[lemma]{Remark}

\newtheorem*{claim}{Claim}

\usepackage{xcolor}
\usepackage{pgf, tikz}
\usetikzlibrary{arrows, automata}

\newcommand{\dens}{\overline{d}}

\newcommand{\len}[1]{| #1 |}

\newtheorem*{Theorem2}{Theorem \ref{disc}}
\newtheorem*{Theorem3}{Theorem \ref{example}}

\begin{document}

%\dtitle{Substitutive structure and eigenvalues of subshifts of low complexity}
\dtitle{Low complexity subshifts have discrete spectrum}

\dauthorone[Darren~Creutz]{Darren Creutz}{creutz@usna.edu}{US Naval Academy}{}
\dauthortwo[Ronnie~Pavlov]{Ronnie Pavlov}{rpavlov@du.edu}{University of Denver}{The second author gratefully acknowledges the support of a Simons Foundation Collaboration Grant.}
\datewritten{\today}

\keywords{Symbolic dynamics, word complexity, discrete spectrum, weak mixing}
\subjclass{Primary: 37B10; Secondary 37A25}

\dabstract{We prove results about subshifts with linear (word) complexity, meaning that $\limsup \frac{p(n)}{n} < \infty$, where for every $n$, $p(n)$ is the number of $n$-letter words appearing in sequences in the subshift. Denoting this limsup by $C$, we show that when $C < \frac{4}{3}$, the subshift has discrete spectrum, i.e. is measurably isomorphic to a rotation of a compact abelian group with Haar measure. We also give an example with $C = \frac{3}{2}$ which has a weak mixing measure. This partially answers an open question of Ferenczi, who asked whether $C = \frac{5}{3}$ was the minimum possible among such subshifts; our results show that the infimum in fact lies in $[\frac{4}{3}, \frac{3}{2}]$.
All results are consequences of a general S-adic/substitutive structure proved when $C < \frac{4}{3}$.}

\makepreprint

\dsectionstar{Introduction}

%\rp{(all of intro new, pretend colored in blue)}

%\rp{General changes: changed to bi-infinite and bi-special throughout to match left-/right-infinite. Trying out a new title for discussion. Changed $\mathcal{A}_X$ to $\mathcal{A}$ throughout; there isn't risk of ambiguity. Unified notation for $p(.)$, settling on $p(n)$ for abstract/intro (talking about growth as fcn. of $q$ seemed strange) and $p(q)$ for rest of document (to avoid ambiguity with $n$ from substitutions). Expanded info about substitutions in definitions. Unified $0 < m < n$ instead of $1 \leq m < n$ throughout. Changed Theorem 2 to topologically weak mixing, and added a definition. Added adjectives to clarify type of weak mixing/eigenstuff throughout. Changed notation for upper density to more standard $\dens$. Reformatted Remark 5.1 to avoid proof environment within a remark.}\\

The main objects of study in symbolic dynamics are \textbf{subshifts}, which are dynamical systems defined by a finite alphabet 
$\mathcal{A}$, a closed shift-invariant set of sequences $X \subset \mathcal{A}^\mathbb{Z}$, and the left-shift map $\sigma$. We sometimes speak of subshifts as measure-theoretic dynamical systems by associating a measure $\mu$; in this case $\mu$ is always assumed to be a Borel probability measure invariant under $\sigma$. One of the most basic ways to measure the `size' of a subshift $X$ is the \textbf{word complexity function} $p(n)$, which measures the number of finite words of length $n$ which appear within points of $X$. In addition to being intimately connected with the fundamental notion of topological entropy (the entropy $h(X)$ is just the exponential growth rate of $p(n)$ when $p(n)$ grows exponentially), many recent works prove that slow growth of $p(n)$ forces various strong structural properties of $X$. 

The well-known Morse-Hedlund theorem \cite{HM} implies that if $X$ is infinite, then $p(n) \geq n+1$ for all $n$. There are subshifts which achieve this minimal value (i.e. $p(n) = n+1$ for all $n$), which are called \textbf{Sturmian} subshifts. We do not give a full treatment here, but briefly say that Sturmian subshifts are defined by symbolic codings of orbits for irrational circle rotations, and in fact are measure-theoretically isomorphic to these rotations (associated with Lebesgue measure). 

Slightly above the minimum possible complexity is the property of linear complexity, meaning that $\limsup p(n)/n = C < \infty$. This implies a great deal about $X$; a full list is beyond this work, but we list a few such results here. In the following, $X$ is \textbf{transitive} when there exists $x \in X$ whose \textbf{orbit} $\{\sigma^n x\}$ is dense in $X$, and 
\textbf{minimal} when every $x \in X$ has dense orbit.

\begin{enumerate}
\item If $X$ is transitive, then the number of ergodic measures on $X$ is bounded from above by $\lfloor C \rfloor$. 
If $C < 3$, then in fact there is only one $\sigma$-invariant measure on $X$, in which case $X$ is said to be
\textbf{uniquely ergodic}. (\cite{boshernitzan}, \cite{DMP})
\item For all $X$, the number of nonatomic generic measures on $X$ is bounded from above by $\lfloor C \rfloor$ (\cite{CK2})
\item If $X$ is minimal, then the automorphism group of $X$ is virtually $\mathbb{Z}$ (in particular, there are at most $\lfloor C \rfloor$ cosets once one mods out by the shift action) (\cite{CK1}, \cite{DDMP})
\item If $X$ is minimal, then $X$ has finite topological rank (\cite{DDMP2})
\item $X$ cannot have any nontrivial strongly mixing measure (\cite{ferenczirank})
\item If $X$ is transitive and $C < 1.5$, then $X$ is minimal (\cite{ormespavlov})
\end{enumerate}

(In fact, the weaker condition $\liminf p(n)/n < \infty$ is sufficient for some of the structure above, but as our results don't involve this quantity, we don't comment on it further here.)
The final item above is one of surprisingly few results proved about subshifts with $C$ close to $1$, and understanding more about the structure of such shifts was a main motivation of this work. In a sense, we show that for $C$ sufficiently close to $1$, a subshift must have structure more and more similar to the Sturmian subshifts, which achieve minimal possible complexity. Recall that Sturmian subshifts are measure-theoretically isomorphic to a (compact abelian) group rotation; this property is called 
\textbf{discrete spectrum}. In fact this property is equivalent to $L^2(X)$ being spanned by the measurable eigenfunctions of $\sigma$ (i.e. $f$ for which $f(\sigma x) = \lambda f(x)$ for some $\lambda$). When $X$ has no eigenfunctions at all, it is said to be 
\textbf{weak mixing}, which is in a sense an opposite property to discrete spectrum.
%When $X$ has no continuous eigenfunctions, it is said to be \textbf{topologically weak mixing}.

Ferenczi (\cite{ferenczirank}) proved that the %stronger
property of \textbf{strong mixing} (which means that $\mu(A \cap \sigma^{-n} B) \rightarrow \mu(A) \mu(B)$ for all measurable $A,B$) cannot hold for any nontrivial measure on a linear complexity subshift. He also gave an example of $X$ with a strongly mixing measure and $p(n)$ quadratic and asked whether this complexity was the lowest possible. This was proved not to be the case in \cite{Creutz2022} and \cite{CPR}, which provided examples first on the order of $n \log n$, and then below any possible superlinear growth rate, establishing linear complexity as the `threshold' for existence of such a measure. In a different work, Ferenczi (\cite{ferenczichacon}) examined the same question for weakly mixing measures, where it is known that linear complexity can occur via the well-known Chacon subshift. He there gave an example of $X$ with a nontrivial weakly mixing measure and $C = 5/3$, and again asked whether this was minimal. This was shown not to be the case in \cite{Creutz2022b}, where examples were given of $C$ arbitrarily close to (but above) $3/2$. 

Our main results are the following.

%\begin{Theorem1}
%If $X$ is an infinite transitive subshift with $\limsup \frac{p(q)}{q} < \frac{4}{3}$, then $(X, \sigma)$ is uniquely ergodic with unique measure which is not topologically weak mixing (and therefore also not weak mixing with respect to its unique measure).
%\end{Theorem1}

\begin{Theorem2}
If $X$ is an infinite transitive subshift with $\limsup \frac{p(q)}{q} < \frac{4}{3}$, then $X$ is uniquely ergodic with unique measure which has discrete spectrum.
\end{Theorem2}

\begin{Theorem3}
There exists an infinite transitive subshift $X$ which is uniquely ergodic, has unique measure which is weak mixing, and 
for which $\limsup \frac{p(q)}{q} = \frac{3}{2}$.
\end{Theorem3}

In \cite{Creutz2022b}, it was also suggested that perhaps a subshift $X$ having a nontrivial weakly mixing measure forces $\limsup \frac{p(q)}{q} > \frac{3}{2}$; Theorem~\ref{example} answers this negatively.  In fact,
the examples from Theorem~\ref{example} satisfy $\lim p(q) - 1.5q = -\infty$,
in contrast to Theorem C from \cite{Creutz2022b}, which showed that for rank-one subshifts, even total ergodicity implies 
$\limsup p(q) - 1.5q = \infty$.  The examples also satisfy $\liminf \frac{p(q)}{q} = 1$ and for any $f(q) \to \infty$, there exist examples such that $p(q) < q + f(q)$ infinitely often.

The proof of Theorem \ref{disc} depends on proving a substitutive structure for subshifts with $C < \frac{4}{3}$.
In fact, for any $C < 2$, Corollary 5.28 from \cite{ps} already implies that $X$ can be generated by a sequence of substitutions $\tau_k$ on the alphabet $\{0,1\}$; this is known as having \textbf{alphabet rank two}. Similar results from
\cite{DDMP2} prove that even $\liminf p(n)/n < \infty$ implies finite alphabet rank. However, in general it is not so easy to prove dynamical properties of a subshift purely from such a structure; the key of our arguments is that when $C$ is closer to $1$, these substitutions come from a very restricted class.  We would like to note that subshifts with $p(n) \leq 4n/3 + 1$ were also studied in \cite{aberkane}, where the author proved a substitutive structure and gave some interesting examples.

Specifically, our Proposition~\ref{decomp} shows that any such subshift is induced by a sequence of substitutions of the form
$\tau_{m_k,n_k}: 0 \mapsto 0^{m_k-1} 1, 1 \mapsto 0^{n_k-1} 1$ where $n \leq 2m$ for $m > 1$ and $n \leq 3$ for $m = 1$. %(the case $m = 1, n = 3$ is not possible when $C < \frac{5}{4}$).
This is related to the well-known Pisot conjecture for subshifts, which states that a subshift generated by iterating a single substitution $\tau$ should have discrete spectrum if the associated matrix (in which the $(a,b)$ entry is the number of occurrences of $b$ in $\tau(a)$) has largest eigenvalue which is a Pisot number (i.e. a complex number with modulus greater than $1$ all of whose conjugates have modulus less than $1$). 

The Pisot conjecture has been proved in some settings, including when $|\mathcal{A}| = 2$ (\cite{MR1947456}, \cite{2pisot}) and whenever the so-called balanced pair algorithm terminates (\cite{balpair}). Our proof of Theorem~\ref{disc} is in fact based on this algorithm.

In our case, the substitutive structure comes from a sequence of substitutions and not a single one; this is sometimes called the S-adic Pisot conjecture, based on the often-used term `S-adic' (among other references, see \cite{sadic}) to refer to sequences obtained by a sequence of substitutions on a fixed alphabet. This is much more difficult. The strongest result is due to \cite{bertheetal}, which is too long to state formally here, but which proves discrete spectrum in a fairly general S-adic setting. They do require, however, that the sequence of substitutions $(\tau_n)$ be recurrent, meaning that for every $k$, there exists $L$ so that $\tau_i = \tau_{i+L}$ for $1 \leq i \leq k$. 

We cannot enforce any such condition on our substitutions, as it's quite possible to have low complexity for $\tau_{m_k, n_k}$ all distinct (for instance, consider Sturmian subshifts, which can be generated by an infinite sequence of distinct substitutions if the digits of its continued fraction expansion are distinct). Nevertheless, due to the extremely simple form of $\tau_{m_k, n_k}$ (in which both $0$ and $1$ are mapped to words of the form $0^i 1$), we are able to prove discrete spectrum. 

We note that indeed our substitutive structure is in some sense Pisot; the associated matrix for $\tau_{m,n}$ is $\left(\begin{smallmatrix}m-1 & 1\\ n-1 & 1 \end{smallmatrix}\right)$, whose eigenvalues are $\frac{\sqrt{m^2 + 4(n-m)}\pm m}{2}$. This matrix is Pisot when $m < n \leq 2m$. Our Proposition~\ref{decomp} implies $m < n \leq 2m$, with the possible exception $m = 1, n = 3$. Though this substitution is not Pisot, Proposition~\ref{decomp} implies that when it occurs, the previous substitution has $n = m+1$, and the composition of those substitutions has matrix $\left(\begin{smallmatrix}0 & 1\\ 2 & 1 \end{smallmatrix}\right) \left(\begin{smallmatrix}m-1 & 1\\ m & 1 \end{smallmatrix}\right) = \left(\begin{smallmatrix}m & 1\\ 3m-2 & 3 \end{smallmatrix}\right)$, which is always Pisot. %it cannot appear when $C < 5/4$ (the case where we prove discrete spectrum), and for $C < 4/3$ (the case where we prove existence of an eigenvalue), it cannot occur consecutively.

One of course should not expect that simply assuming each $\tau_i$ to be Pisot should guarantee discrete spectrum; informally, if the second eigenvalues have moduli each less than $1$ but which converge to $1$ quickly, then the `average behavior' will be that of a non-Pisot number. This is essentially the construction of our example from Theorem~\ref{example}, which not only does not have discrete spectrum, but is weak mixing (i.e. has no eigenvalue at all). 

\section{Definitions and preliminaries}\label{defs}

%\rp{Again, all new, pretend all colored blue.}

Let $\mathcal{A}$ be a finite subset of $\mathbb{Z}$; the \textbf{full shift} is the set $\mathcal{A}^\mathbb{Z}$ associated with the product topology. We use $\sigma$ to denote the left shift homeomorphism on $\mathcal{A}^\mathbb{Z}$. 
A \textbf{subshift} is a closed $\sigma$-invariant subset $X \subset \mathcal{A}^\mathbb{Z}$. The \textbf{orbit} of $x \in X$ is the 
set $\{\sigma^n x\}_{n \in \mathbb{Z}}$.
 A subshift $X$ is \textbf{transitive} when it is the closure of the orbit of a single sequence 
$x$, and \textbf{minimal} when it is the closure of the orbit of every $x \in X$. For a minimal subshift $X$, in a slight abuse of notation, we sometimes refer to $X$ as the orbit closure of a one-sided sequence $y \in \mathcal{A}^\mathbb{N}$; this simply means that 
$X$ is the orbit closure of a two-sided sequence $x \in X$ containing $y$.

A \textbf{word} is any element of $\mathcal{A}^n$ for some $n \in \mathbb{N}$, referred to as its \textbf{length} and denoted by
$\len{w}$. We denote $\mathcal{A}^* = \bigcup_{n \geq 1} \mathcal{A}^n$. We represent the concatenation of words $w_1, w_2, \ldots, w_n$ by $w_1 w_2 \ldots w_n$. %A \textbf{substitution} is a map $\tau: \mathcal{A} \rightarrow \mathcal{A}^*$, which induces a map from $\mathcal{A}^*$ to itself via concatenation (i.e. $\tau(w) := \tau(w_1) \ldots \tau(w_n)$.

The \textbf{language} of a subshift $X$ on $\mathcal{A}$, denoted $L(X)$, is the set of all finite words appearing as subwords of points in $X$. For any $n \in \mathbb{N}$, we denote $L_n(X) = L(X) \cap \mathcal{A}^n$, the set of $n$-letter words in $L(X)$.
For a subshift $X$, the \textbf{word complexity function} of $X$ is defined by $p(n) := |L_{n}(X)|$. 
For a subshift $X$ and word $w \in L(X)$ we denote by $[w]$ the clopen subset in $X$ consisting of all $x \in X$ such that $x_{0}\ldots x_{|w|-1} = w$. %By an \textit{$X$-cylinder set} we mean any set of the form $[w]$ for some $w \in L(X)$. Note that we use the term $X$-cylinder set instead of the more commonly used 'cylinder set' to refer to such sets, since we use the term cylinder set to refer to a different class of objects (see Definition~\ref{def:cylinderset}).

One way to generate subshifts is via substitutions. 
A \textbf{substitution} (sometimes called a morphism) is a map 
$\tau: \mathcal{A} \rightarrow \mathcal{B}^*$ for finite alphabets $\mathcal{A}$ and $\mathcal{B}$. An example is the well-known Thue-Morse substitution $\{ 0, 1 \} \to \{ 0, 1 \}^{*}$ given by $0 \mapsto 01$ and $1 \mapsto 10$.

Substitutions can be composed when viewed as homomorphisms on the monoid of words under composition, i.e. if $\tau: \mathcal{A} \rightarrow \mathcal{B}^*$ and
$\rho: \mathcal{B} \rightarrow \mathcal{C}^*$, then $\rho \circ \tau: \mathcal{A} \rightarrow \mathcal{C}^*$ can be defined by
$(\rho \circ \tau)(a) = \rho(b_1) \rho(b_2) \ldots \rho(b_k)$, where $\tau(a) = b_1 \ldots b_k$.  %For example, the Thue-Morse substitution can be extended to all of $\{ 0, 1 \}^*$ by e.g.~$001 \mapsto 010110$.

When a sequence of substitutions $\tau_k: \mathcal{A} \rightarrow \mathcal{A}^*$ shares the same alphabet, and when there exists $a \in \mathcal{A}$ for which $\tau_k(a)$ begins with $a$ for all $k$, clearly
$(\tau_1 \circ \cdots \circ \tau_k)(a)$ is a prefix of $(\tau_1 \circ \cdots \circ \tau_{k+1})(a)$ for all $k$. In this situation one may then speak of the (right-infinite) limit of $(\tau_1 \circ \cdots \circ \tau_k)(a)$. For example, if all $\tau_k$ are equal to the Thue-Morse substitution and $a = 0$, the limiting sequence is $.0110100110010110 \ldots$, and the orbit closure of this sequence is called the Thue-Morse substitution subshift. 

For any subshift $X$, there is a convenient way to represent the $n$-language and possible transitions between words in points of $X$ by a directed graph called the Rauzy graph.

\begin{definition}
For a subshift $X$ and $n \in \mathbb{N}$, the \textit{$n$th Rauzy graph of $X$} is the directed graph $G_{X, n}$ with vertex
set $L_{n}(X)$, and directed edges from $w_1 \ldots w_{n}$ to $w_2 \ldots w_{n+1}$ for all $w_1 \ldots w_{n+1} \in L_{n+1}(X)$.
\end{definition}

\begin{example}
If $X$ is the golden mean subshift consisting of bi-infinite sequences on $\{0,1\}$ without consecutive $1$s, and $n = 3$, then 
$G_{X, 3}$ is the following directed graph:\\

\begin{center}
\tikzset{every loop/.style={min distance=10mm,in=150,out=210,looseness=10}}
\begin{tikzpicture}[
            > = stealth, % arrow head style
            shorten > = 1pt, % don't touch arrow head to node
            auto,
            node distance = 3cm, % distance between nodes
            thick % line style
        ]

        \tikzstyle{every state}=[
            draw = black,
            thick,
            fill = white,
            minimum size = 4mm
        ]

        \node[state] (000) {$000$};
        \node[state] (100) [above right of=000] {$100$};
        \node[state] (010) [right of=100] {$010$};
        \node[state] (001) [below right of=000] {$001$};
        \node[state] (101) [right of=001] {$101$};

        \path[->] (000) edge [loop left] node {0000} (000);
        \path[->] (000) edge [bend right=10] node[swap] {0001} (001);
        \path[->] (100) edge [bend right=10] node[swap] {1001} (001);
        \path[->] (100) edge [bend right=30] node[swap] {1000} (000);
        \path[->] (010) edge [bend right=5] node[swap] {0100} (100);
        \path[->] (001) edge [bend left=5] node {0010} (010);
        \path[->] (101) edge [bend left=15] node {1010} (010);
        \path[->] (010) edge [bend left=20] node {0101} (101);
    \end{tikzpicture}
    \end{center}
\end{example}

There is a natural association from bi-infinite paths on the Rauzy graph to sequences in $\mathcal{A}^{\mathbb{Z}}$; a sequence of vertices
$(v_k)$ corresponds to the sequence $x \in A^{\mathbb{Z}}$ defined by $x(k) \ldots x(k+n-1) = v_k$ for all $k$.
The main usage of the Rauzy graph is that every point of $X$ corresponds to a bi-infinite path in the Rauzy graph. However, the opposite is not necessarily true; if $X$ has restrictions/forbidden words of length greater than $n+1$, then there may be paths in the Rauzy graph whose associated sequences are not in $X$. However, when $X$ has low word complexity function, the set of paths in the Rauzy graph is sufficiently restrictive to give us useful information about (but not necessarily a complete description of) $X$.

We note that when $X$ is transitive, $G_{X,n}$ is strongly connected for all $n$, i.e. there is a path between any two vertices. Rauzy graphs are particularly useful for working with so-called left/right special words in $L(X)$.

\begin{definition}
A word $w \in L(X)$ is \textbf{left-special} (resp. \textbf{right-special}) if there exist $a \neq b \in \mathcal{A}$ so that
$aw, bw \in L(X)$ (resp. $wa, wb \in L(X)$). A word is \textbf{bi-special} if it is both left- and right-special.
\end{definition}

For a given $n$, the left- and right-special words in $L_n(X)$ correspond to vertices of $G_{X,n}$ with multiple incoming/outgoing 
edges respectively. When $G_{X,n}$ has relatively few such vertices, large portions of bi-infinite paths are `forced' in the sense that when such a path visits a vertex which is not right-special, there is only one choice for the following edge. %{The notation of the following sentence was actually never used, so I want to cut it.} \sout{For this reason, we refer to paths on $G_{X,n}$ not containing any left- or right-special vertex as \textbf{forced}.} 
Note that if $X$ contains no right-special words of some length $n$, then any edge of $G_{X,n}$ forces all subsequent edges, meaning that $G_{X,n}$ has only finitely many bi-infinite paths and $X$ is finite. Therefore every infinite subshift $X$ has right-special words of every length, and a similar argument shows that it has left-special words of every length as well.

A particularly simple case that we deal with repeatedly is when $p(n+1) - p(n) = 1$; this means that $G_{X, n}$ has exactly one more edge than the number of vertices, which means that it has a single vertex $r$ with two outgoing edges and a single vertex $\ell$ with two incoming edges ($\ell$ and $r$ may be the same vertex), which correspond to the unique right- and left-special words in $L_n(X)$. It's not hard to show that when $X$ is transitive and $p(n+1) - p(n) = 1$, the structure of the Rauzy graph $G_{X, n}$ must be a (possibly empty) path from $\ell$ to $r$ and two edge-disjoint paths from $r$ to $\ell$. 

We will frequently make use of the following standard lemma, essentially contained in \cite{HM}, for estimating word complexity.

\begin{lemma}\label{RSlem}
Let $X$ be a subshift on alphabet $\mathcal{A}$, for all $n$ let $RS_n(X)$ denote the set of right-special words of length $n$ in the language of $X$, and for all right-special $w$, let
$F(w)$ denote the set of letters which can follow $w$, i.e. $\{a \ : \ wa \in L(X)\}$. Then, for all $q > r$,
\[
p(q) = p(r) + \sum_{i = r}^{q-1} \sum_{w \in RS_i(X)} (|F(w)| - 1).
\]
\end{lemma}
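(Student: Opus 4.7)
The plan is to establish the statement by a short telescoping argument, reducing the claim to a per-level identity
\[
p(i+1) - p(i) = \sum_{w \in RS_i(X)} (|F(w)| - 1)
\]
for each $i \geq 1$, and then summing from $i = r$ to $i = q-1$.

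To prove the per-level identity, I would observe that every $(i+1)$-letter word $u \in L_{i+1}(X)$ is uniquely determined by the pair $(w,a)$ where $w = u_1 \ldots u_i \in L_i(X)$ is its length-$i$ prefix and $a = u_{i+1} \in F(w)$ is the letter extending it. Conversely, for every $w \in L_i(X)$ and every $a \in F(w)$, the concatenation $wa$ lies in $L_{i+1}(X)$ by definition of $F(w)$. This yields the identity
\[
p(i+1) = \sum_{w \in L_i(X)} |F(w)|.
\]
Since every $w \in L_i(X)$ occurs as a factor of some bi-infinite sequence in $X$, it has at least one right extension, so $|F(w)| \geq 1$ for all $w$. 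Splitting the sum and subtracting $p(i) = \sum_{w \in L_i(X)} 1$ gives
\[
p(i+1) - p(i) = \sum_{w \in L_i(X)} (|F(w)| - 1),
\]
and the summands with $|F(w)| = 1$ vanish, so only $w \in RS_i(X)$ contribute, yielding the desired per-level identity.

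Finally, I would telescope: summing $p(i+1) - p(i) = \sum_{w \in RS_i(X)} (|F(w)| - 1)$ from $i = r$ to $i = q - 1$ collapses the left-hand side to $p(q) - p(r)$, and rearranging gives the claimed formula. There is no real obstacle here; the only subtlety worth checking is the standing assumption that $|F(w)| \geq 1$ for every $w \in L_i(X)$, which uses that $L(X)$ is the language of a nonempty subshift and hence every factor extends to the right inside some bi-infinite point.
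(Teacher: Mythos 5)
Your proposal is correct and is essentially the same argument as the paper's: the identity $p(i+1) = \sum_{w \in L_i(X)} |F(w)|$ is exactly the paper's observation that the final-letter-removal map $L_{i+1}(X) \to L_i(X)$ is surjective with fibers of size $|F(w)|$, and your telescoping sum is the paper's induction. No further comment needed.
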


\begin{proof}
Consider the map $f: L_{r+1}(X) \rightarrow L_r(X)$ obtained by removing the final letter, i.e. $f(wa) = w$. It's clear that $f$ is surjective and that $|f^{-1}(w)| = 1$ for $w$ which is not right-special and $|f^{-1}(w)| = |F(w)|$ for $w \in RS_r(X)$. The result for $q = r+1$ follows immediately, and the general case follows by induction.
\end{proof}

The following corollary is immediate.

\begin{corollary}\label{RScor}
If $X$ is an infinite subshift and $T \subset \mathbb{N}$ denotes the set of lengths $n$ for which $|RS_n(X)| > 1$, then for all $q > r$,
\[
p(q) \geq p(r) + (q-r) + |T \cap \{r, \ldots, q-1\}|.
\]
If $|RS_i(X)| \leq 2$ for all $m \leq i < n$ and $|F(w)| = 2$ for all right-special $w$ with lengths in $[r,q)$, then the inequality above is an equality.
\end{corollary}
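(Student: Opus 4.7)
The plan is to deduce the corollary directly from Lemma~\ref{RSlem}, which gives the exact identity
\[
p(q) - p(r) = \sum_{i=r}^{q-1} \sum_{w \in RS_i(X)} (|F(w)| - 1).
\]
So I only need to bound the inner sum from below by $1 + \mathbf{1}_{i \in T}$ for each $i$, and to check that this bound is tight under the stated hypotheses.

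For the inequality, I would argue as follows. Fix $i$ with $r \leq i < q$. Because $X$ is infinite, the discussion just above Lemma~\ref{RSlem} shows that $RS_i(X) \neq \emptyset$, so there is at least one right-special $w$ of length $i$, and such a $w$ contributes $|F(w)| - 1 \geq 1$ to the inner sum. If moreover $i \in T$, then $|RS_i(X)| \geq 2$, and summing $|F(w)| - 1 \geq 1$ over at least two right-special words gives an inner sum of at least $2$. In either case the inner sum is at least $1 + \mathbf{1}_{i \in T}$. Summing over $i \in \{r, \dots, q-1\}$ and invoking Lemma~\ref{RSlem} yields
\[
p(q) - p(r) \geq (q-r) + |T \cap \{r,\ldots,q-1\}|,
\]
which is the desired inequality.

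For the equality clause, assume $|RS_i(X)| \leq 2$ for all $i$ in the relevant range and $|F(w)| = 2$ for every right-special $w$ of length in $[r,q)$. Then every term $|F(w)| - 1$ equals exactly $1$, so the inner sum for length $i$ equals $|RS_i(X)| \in \{1, 2\}$, which coincides with $1 + \mathbf{1}_{i \in T}$. Substituting back into the identity from Lemma~\ref{RSlem} gives equality throughout.

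There is essentially no obstacle here: the corollary truly is an immediate consequence of Lemma~\ref{RSlem} together with the observations that (i) an infinite subshift has at least one right-special word of every length and (ii) every right-special word $w$ satisfies $|F(w)| \geq 2$ by definition. The only minor care needed is to recognize that membership in $T$ provides a second right-special word contributing an additional $+1$ to the relevant telescoping sum.
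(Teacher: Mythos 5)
Your proof is correct and is exactly the intended argument: the paper states the corollary as an immediate consequence of Lemma~\ref{RSlem}, and your derivation (inner sum at least $|RS_i(X)| \geq 1 + \mathbf{1}_{i \in T}$ since every infinite subshift has a right-special word of each length and $|F(w)| \geq 2$, with equality under the stated hypotheses) is precisely that deduction. No issues.
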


\section{Structure of subshifts with \texorpdfstring{$C < \nicefrac{4}{3}$}{C < 4/3}}%complexity below \texorpdfstring{$\nicefrac{4}{3}$}{4/3}}

As mentioned above, our results rely on a substitutive/S-adic structure for subshifts with sufficiently low complexity. The substitutions in question all have the same form. Namely, for all positive integers $m<n$, define the substitution
\[
\tau_{m,n}: 
\begin{cases} 0 \mapsto 0^{m-1} 1\\ 1\mapsto 0^{n-1} 1. \end{cases}
\]
When $m_1, \ldots, m_k$ and $n_1, \ldots, n_k$ are understood, we use the shorthand notation
\[
\rho_k = \tau_{m_1, n_1} \circ \cdots \circ \tau_{m_k, n_k}.
\]

\begin{proposition}\label{decomp}
If $X$ is an infinite transitive subshift with $\limsup \frac{p(q)}{q} < \frac{4}{3}$, then there exists a substitution 
$\pi: \{0,1\} \rightarrow \mathcal{A}^*$ where $\pi(0), \pi(1)$ begin with different letters and $|\pi(0)| < |\pi(1)| < 2|\pi(0)|$ and sequences $(m_k), (n_k)$ satisfying $0 < m_k < n_k$ so that $X$ is the orbit closure of 
\[
x^{(m_k), (n_k)} = \lim_{k} (\pi \circ \tau_{m_1, n_1} \circ \cdots \circ \tau_{m_k, n_k})(0) = \lim_k \pi(\rho_k(0)).
\]
In addition,
\begin{itemize}
\item $n_{k} \leq 2m_{k}$ whenever $m_{k} > 1$;
\item $n_{k} < 1.9m_{k}$ whenever $m_{k} > 4$;
\item $n_{k} \leq 3$ whenever $m_{k} = 1$; 
\item if $m_{k+1} = 1, n_{k+1} = 3$ then $n_{k} = m_{k} + 1$; and
\item every right-special word of length at least $\len{s(\pi(0))^{m_{1}-1}}$, where $s$ is the maximal common suffix of $(\pi(0))^{\infty}$ and $(\pi(0))^{\infty}\pi(1)$, is a suffix of a concatenation of $\pi(0)$ and $\pi(1)$.
\end{itemize}
\end{proposition}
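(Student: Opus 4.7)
My approach is a descending return-word construction on $X$, in which the initial substitution $\pi$ is the coding of $X$ by return words to an early bispecial word in its language, and each $\tau_{m_k,n_k}$ arises as the coding substitution of the next return-word coding of an induced subshift $X^{(k)}$ on $\{0,1\}$. The complexity bound $\limsup p(q)/q<4/3$ will be shown to survive each coding in a sufficient form to force a simple Rauzy-graph structure at every level.

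I would begin by using Lemma \ref{RSlem} to show that $\limsup p(q)/q<4/3$ forces the sum $\sum_{i<q}\sum_{w\in RS_i(X)}(|F(w)|-1)$ to grow with slope less than $1/3$, so for a set of $n$ of lower density strictly greater than $2/3$ we have $p(n+1)-p(n)=1$, meaning a unique right-special vertex $r$ of out-degree $2$ and a unique left-special vertex $\ell$ of in-degree $2$. As discussed in Section \ref{defs}, transitivity then pins $G_{X,n}$ down as a (possibly empty) spine path from $\ell$ to $r$ together with two edge-disjoint paths from $r$ back to $\ell$. Tracking how $r$ and $\ell$ evolve with $n$ through their forced extensions, I would extract an increasing sequence of bispecial lengths $b^{(0)}<b^{(1)}<\cdots$ at which the unique bispecial word $w^{(k)}$ admits exactly two return words $r_0^{(k)}, r_1^{(k)}\in L(X)$, with $|r_0^{(k)}|<|r_1^{(k)}|$ and starting with different letters, because they correspond to the two distinct edges leaving $r$.

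Setting $\pi(0)=r_0^{(0)}$ and $\pi(1)=r_1^{(0)}$, the coding of $X$ by returns to $w^{(0)}$ yields a subshift $X^{(1)}$ on $\{0,1\}$. The recursive claim is that after $k$ such codings, the next pair of return words of $X^{(k)}$ at its own next bispecial is forced to take the form $0^{m_k-1}1$ and $0^{n_k-1}1$ for some $1\leq m_k<n_k$: in the Rauzy graph of $X^{(k)}$ at the relevant length, the two edges out of the right-special vertex are labeled $0$ and $1$, and reading a return loop around the theta-like structure produces some run of $0$'s followed by the first $1$ that rejoins the spine, which by the graph structure must terminate the loop. The length conditions $|\pi(0)|<|\pi(1)|<2|\pi(0)|$ and the numerical bounds on $(m_k,n_k)$ will then follow from the complexity bookkeeping of the next step.

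For the constraints $n_k\leq 2m_k$ (when $m_k>1$), $n_k<1.9m_k$ (when $m_k>4$), $n_k\leq 3$ (when $m_k=1$), and the compatibility $(m_{k+1},n_{k+1})=(1,3)\Rightarrow n_k=m_k+1$, my plan is to argue by contradiction: if one of them failed at stage $k$, I would push the extra right-special structure that it forces in $L(X^{(k)})$ back to $L(X)$ through the composed substitution $\pi\circ\rho_{k-1}$, then invoke Lemma \ref{RSlem} at a length $q$ of the order of $|\pi(\rho_k(0))|$ to obtain $p(q)\geq 4q/3$. Each stage contributes a predictable number of bispecials and extra followers at scales determined by the $|\pi(\rho_j(0))|$, and the forbidden configurations are precisely those that cause the running total to exceed $4/3$. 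This tallying --- particularly at the borderline $n_k=2m_k$ vs.\ $n_k<1.9m_k$ for large $m_k$, and the non-Pisot exception $(m_{k+1},n_{k+1})=(1,3)$ which must force $\tau_{m_k,m_k+1}$ at the previous stage so that the two-step composition $\tau_{m_k,m_k+1}\circ\tau_{1,3}$ recovers a Pisot matrix --- is where I expect the main difficulty. Finally, once a word in $L(X)$ has length at least $|s(\pi(0))^{m_1-1}|$, the distinct first letters of $\pi(0)$ and $\pi(1)$ let me uniquely recover its right-to-left parse into $\pi$-blocks, so any right-special word of that length or longer is a suffix of a concatenation of $\pi(0)$ and $\pi(1)$, establishing the final bullet.
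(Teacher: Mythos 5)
Your overall architecture — code by return words to bispecial words, iterate, and use Lemma \ref{RSlem}/Corollary \ref{RScor} to constrain which substitutions can appear — is the same as the paper's (Lemma \ref{ab} for the base case, Lemma \ref{induction} for the inductive step). But the plan defers everything to ``complexity bookkeeping'' without identifying the one technical obstruction that the bookkeeping actually hinges on, and that obstruction is not routine. When you push a right-special word of the induced system $X^{(k)}$ back into $L(X)$ through $\pi\circ\rho_{k-1}$, the resulting right-special word of $X$ is not just the image of the level-$k$ word: it carries a forced left overhang $s_k$ (the maximal common suffix of $v_k^{\infty}$ and $v_k^{\infty}u_k$) and a forced right overhang $p_k$ (the maximal common prefix of $u_k$ and $v_k$). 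Two distinct level-$k$ right-special words therefore agree in $L(X)$ on a common suffix of length at least $|s_k|+|p_k|$ more than you would naively expect, so the range of lengths over which you can count \emph{two} right-special words is shortened by $|s_k|+|p_k|$, while the lengths at which they live are lengthened by the same amount. If $|s_k|+|p_k|$ were allowed to grow like a large multiple of $|v_k|$, the resulting lower bound on $p(q)/q$ would drop below $4/3$ and no contradiction would be obtained. The paper's proof carries the inductive hypothesis $|p_k|+|s_k|<\min(|u_k|+|v_k|,\,3|v_k|)$, establishes the recursions $p_{k+1}=v_k^{m_k-1}p_k$ and $s_{k+1}=s_k v_{k+1}$, and shows this bound propagates — and crucially, the propagation uses the very inequalities $n_k\le 2m_k$, etc., that are being proved, so the numerical constraints and the overhang control must be run as a single intertwined induction. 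Your plan contains no analogue of this, and without it the contradiction at each stage (and in particular the delicate $(1,3)$-followed-by-$(m,m+1)$ case, which needs $|p_{k-1}|+|s_{k-1}|<3|v_{k-1}|$ explicitly) does not close. Relatedly, you assert that only two return words of the form $0^{m-1}1,0^{n-1}1$ appear at each level from the shape of a single Rauzy graph; ruling out a third run-length of $0$'s is itself a complexity argument over a long range of lengths (the $x<y<z$ contradiction in Lemma \ref{induction}), again requiring the overhang bound.

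A second, smaller gap: your justification of the final bullet is not correct as stated. Unique parsability of points of $X$ into $\pi$-blocks does not imply that a long right-special word is a suffix of a concatenation of $\pi(0)$ and $\pi(1)$ — a priori such a word could sit at an arbitrary offset relative to the block boundaries. The paper's argument instead shows that $s\,(\pi(0))^{m_1-1}$ is the \emph{unique} right-special word of its length, so every longer right-special word must end with it, and then uses that the return words of $s\,\pi(0)$ are $\pi(0)$ and $\pi(1)(\pi(0))^{m_1-1}$ to pin the word to block boundaries. You would need this (or an equivalent) identification of where right-special words sit relative to the $\pi$-decomposition; ``distinct first letters give a right-to-left parse'' does not supply it. Finally, the base case $|\pi(0)|<|\pi(1)|<2|\pi(0)|$ is not automatic from taking the two shortest return words to a bispecial word — the paper's Lemma \ref{ab} needs genuine casework (replacing $w$ by a shorter bispecial $w_{00}$ when $|w|\ge 3|v|$, and rewriting $u=u^{\star}v^{s}$ to produce $b$) to arrange it, which your sketch also elides.
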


%\begin{remark}
%The case $m_{k} = 1, n_{k} = 3$ cannot be avoided as there exist subshifts with $\limsup \frac{p(q)}{q} < \frac{4}{3}$ generated by sequences of $\tau_{1,2}$ and $\tau_{1,3}$ substitutions, see Proposition \ref{13}.
%\end{remark}

\begin{definition}
A word $v$ is a \textbf{root} of $w$ if $\len{v} \leq \len{w}$ and $w$ is a suffix of the left-infinite word $v^{\infty}$.  The \textbf{minimal root} of $w$ is the shortest $v$ which is a root of $w$.  %(Here and throughout, $|w|$ represents the length of a word $w$.)

\end{definition}

Every word $w$ has a unique minimal root since it is a root of itself (and all roots of $w$ are suffixes of $w$).

\begin{lemma}[\cite{Creutz2022} Lemma 5.7]\label{B}
If $w$ and $v$ are words with $\len{v} \leq \len{w}$ such that $wv$ has $w$ as a suffix then $v$ is a root of $w$.
\end{lemma}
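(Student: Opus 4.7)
The plan is to compare $w$ letter-by-letter with its copy sitting at the right end of $wv$. Let $n = \len{w}$ and $k = \len{v}$, so $k \leq n$ by hypothesis, and write $w = w_1 \cdots w_n$ and $v = v_1 \cdots v_k$. The hypothesis that $w$ is a suffix of $wv$ says the last $n$ letters of the length-$(n+k)$ word $w_1 \cdots w_n v_1 \cdots v_k$ agree with $w_1 \cdots w_n$; reading indices, those last $n$ letters are $w_{k+1} w_{k+2} \cdots w_n v_1 v_2 \cdots v_k$.

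I would then split the comparison into two blocks. Matching the final $k$ positions gives $v_j = w_{n-k+j}$ for $1 \leq j \leq k$, i.e.\ $v$ is precisely the length-$k$ suffix of $w$. Matching the preceding $n-k$ positions gives $w_i = w_{i+k}$ for $1 \leq i \leq n-k$, i.e.\ $w$ has period $k$. Having these two facts in hand is the heart of the argument: together they say that $w$ ends with $v$ and is $k$-periodic all the way across.

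To finish, I need to convert these into the statement that $w$ is a suffix of the left-infinite word $v^\infty$, which is the definition of $v$ being a root of $w$. Given any index $i$ with $1 \leq i \leq n$, iterating the period relation $w_i = w_{i+k}$ produces the unique $t \geq 0$ for which $i + tk \in \{n-k+1, \ldots, n\}$; by periodicity $w_i = w_{i+tk}$, and by the suffix identity this equals $v_{i + tk - (n-k)}$. Since the residues $i + tk - (n-k) \pmod k$ trace out exactly the alignment of the length-$n$ suffix of $\cdots vvv$ with $w$, this exhibits each letter of $w$ as the correct letter read off from $v^\infty$. The whole argument is elementary index bookkeeping, which is the only place to go astray; this is consistent with the lemma being cited here as a routine technical fact from \cite{Creutz2022}.
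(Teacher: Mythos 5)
Your argument is correct: the index comparison correctly extracts that $v$ is the length-$\len{v}$ suffix of $w$ and that $w$ is $\len{v}$-periodic, and these two facts do exactly yield that $w$ is a suffix of $v^{\infty}$. The paper itself gives no proof (the lemma is imported by citation from \cite{Creutz2022}), but your letter-by-letter periodicity argument is the standard one for this fact and matches the cited source in substance.
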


\begin{lemma}[\cite{Creutz2022} Lemma 5.8]\label{B1}
If $uv = vu$ then $u$ and $v$ are powers of the same word, i.e.~there exists a word $v_{0}$ and integers $t,s > 0$ such that $u = v_{0}^{t}$ and $v = v_{0}^{s}$.
\end{lemma}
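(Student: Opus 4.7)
The plan is to prove this classical commuting-words lemma (a very special case of the Fine and Wilf theorem) by strong induction on $|u|+|v|$, assuming both words are nonempty since otherwise the conclusion with $t,s>0$ is impossible. By symmetry of the hypothesis $uv=vu$ in $u$ and $v$, I may assume $|u|\leq |v|$.

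The key observation is that both $u$ and $v$ are prefixes of the common word $uv=vu$, and since $|u|\leq |v|$ this forces $u$ to be a prefix of $v$. Write $v=uw$ for some (possibly empty) word $w$. Substituting into $uv=vu$ gives $u(uw)=(uw)u$, i.e.~$uuw=uwu$, and canceling the leading $u$ yields $uw=wu$. If $w$ is the empty word, then $u=v$ and the lemma holds trivially with $v_0=u$ and $t=s=1$. Otherwise $|u|+|w|=|v|<|u|+|v|$, so the inductive hypothesis applied to the pair $(u,w)$ produces a word $v_0$ and positive integers $t,r$ with $u=v_0^t$ and $w=v_0^r$; then $v=uw=v_0^{t+r}$, so the conclusion holds with $s=t+r$.

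The base case $|u|+|v|=2$ reduces to $u=v$ of length one, where $v_0=u$ works. There is no genuine obstacle here: the only points requiring any care are justifying the cancellation step (valid because $u$ is nonempty, and the first $|u|$ letters of $uuw$ and $uwu$ both equal $u$) and separating out the degenerate case $w=\varepsilon$ so that the inductive hypothesis is invoked only when the total length strictly decreases. The whole argument is essentially forced once one writes $v=uw$.
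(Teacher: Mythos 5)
Your proof is correct: this is the standard inductive argument for the commuting-words lemma (reduce via $v=uw$, cancel, and recurse on the strictly shorter pair), and all the delicate points — the cancellation, the degenerate case $w=\varepsilon$, and the strict decrease of $|u|+|w|$ — are handled properly. Note that the paper itself gives no proof of this statement; it is imported as Lemma 5.8 of \cite{Creutz2022}, so there is no in-paper argument to compare against, but your self-contained proof is exactly the classical one and would serve as a valid substitute for the citation.
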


\begin{lemma}\label{s}
Let $u$ and $v$ be words with $\len{v} < \len{u}$. Let $s$ be the maximal common suffix of $v^{\infty}$ and $v^{\infty}u$. If $\len{s} \geq \len{vu}$ then $u$ and $v$ are powers of the same word.
\end{lemma}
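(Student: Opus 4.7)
The plan is to use the hypothesis to force the length-$(|v|+|u|)$ suffix of $v^\infty$ to be exactly $vu$, transfer the $|v|$-periodicity of $v^\infty$ to $vu$, and thereby extract two competing expressions for $u$ in terms of $v$ whose comparison collapses to Lemma \ref{B1}.

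The first step is to identify $vu$ as a suffix of $v^\infty$. Since $|s|\geq |u|$ and $s$ is a suffix of $v^\infty u$, write $s = tu$ where $t$ is the length-$(|s|-|u|)$ suffix of $v^\infty$; the bound $|s|\geq |vu|$ gives $|t|\geq |v|$, so $t$ ends in $v$. Reading the length-$(|v|+|u|)$ suffix of $s$ in two ways --- once using that $s$ itself is a suffix of $v^\infty$, once using $s = tu$ --- shows that $vu$ equals the length-$(|v|+|u|)$ suffix of $v^\infty$, so in particular $vu$ inherits period $|v|$ from $v^\infty$.

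Next I would unpack this periodicity. The period condition asserts that the $i$-th letter of $vu$ agrees with the $(i+|v|)$-th letter for $0 \leq i < |u|$; separating into the cases $i < |v|$ and $i \geq |v|$ yields that $v$ is a prefix of $u$ and that $u$ itself has period $|v|$. These force $u = v^q p$, where $|u| = q|v| + r$ with $0 \leq r < |v|$ (so $q \geq 1$, since $|u| > |v|$) and $p$ is the prefix of $v$ of length $r$. In parallel, $u$ is the length-$|u|$ suffix of $vu$ and hence of $v^\infty$, giving $u = s'v^q$ where $s'$ is the length-$r$ suffix of $v$.

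Finally I would compare the two representations. If $r = 0$ then $u = v^q$ directly, so suppose $r\geq 1$. Comparing the first $|v|$ letters of $v^q p = s' v^q$ (using $q\geq 1$) yields $v = s'\alpha$, where $\alpha$ denotes the prefix of $v$ of length $|v|-r$; combined with the tautological decomposition $v = \alpha s'$ this gives $\alpha s' = s'\alpha$, and Lemma \ref{B1} then produces a common root $v_0$ of $\alpha$ and $s'$, hence of $v = \alpha s'$ and $u = s' v^q$. The step requiring the most care is pinning down both representations of $u$ simultaneously from the single periodic structure; once that is in place, the reduction to Lemma \ref{B1} is routine.
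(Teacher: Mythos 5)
Your proof is correct. It shares the paper's starting point: the hypothesis $\len{s}\geq\len{vu}$ forces $vu$ to be a suffix of the $\len{v}$-periodic word $v^{\infty}$, and everything then reduces to Lemma \ref{B1}. Where you diverge is in how you extract the commutation relation. The paper notes that $u$, being a suffix of $v^{\infty}$, makes $uv$ a suffix of $v^{\infty}$ as well, and since $\len{s}\geq\len{uv}$ both $uv$ and $vu$ are suffixes of $s$ of equal length; hence $uv=vu$ and Lemma \ref{B1} applies directly to $u$ and $v$, with no case split. You instead unpack the $\len{v}$-periodicity of $vu$ into two decompositions $u=v^{q}p=s'v^{q}$, compare first blocks to get $s'\alpha=\alpha s'$ for the complementary prefix/suffix pieces of $v$, and apply Lemma \ref{B1} to $\alpha$ and $s'$ (correctly noting both are nonempty when $r\geq 1$, as Lemma \ref{B1} requires). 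Your route is a few lines longer and needs the $r=0$ case handled separately, but it makes the primitive root of $v$ explicit and is a self-contained periodicity argument; the paper's version is the slicker observation that the same word $s$ witnesses both orderings $uv$ and $vu$.
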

\begin{proof}
If $\len{s} \geq \len{vu}$ then $s$ has $vu$ as a suffix.  Since $v$ is a root of $s$, $v$ is a root of $u$  so $u = u^{\prime}v^{t}$ for some $t \geq 1$ and suffix $u^{\prime}$ of $v$.  Then $s$ has $u^{\prime}v^{t}v$ as a suffix since that is a suffix of $v^{\infty}$ and $\len{s} \geq \len{u^{\prime}v^{t}v}$.  Then $uv$ is a suffix of $s$ so $uv = vu$ as they are both suffixes of $s$ and have the same length so Lemma \ref{B1} gives the claim.
\end{proof}

\begin{lemma}\label{s2}
Let $v$ and $u$ be words with $\len{v} < \len{u}$ which are not powers of the same word and where $v$ is a suffix of $u$.  Let $s$ be the maximal common suffix of $v^{\infty}$ and $v^{\infty}u$ (which must be finite by Lemma~\ref{s}).  Then $s$ is a suffix of any left-infinite concatenation of $u$ and $v$.
\end{lemma}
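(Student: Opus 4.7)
The plan is to reduce the lemma to a statement about finite words and then induct. Specifically, I will establish the auxiliary claim that for any finite word $w$ which is a concatenation of $u$ and $v$, the suffix of length $\len{s}$ of $v^{\infty}w$ equals $s$. Once this is in hand, the lemma follows immediately: for a left-infinite concatenation $x$ of $u$ and $v$, its last $\len{s}$ characters lie entirely within the rightmost $\ceil{\len{s}/\len{v}}$ blocks of $x$ (since each block has length at least $\len{v}$), so prepending $v^{\infty}$ to those blocks yields another left-infinite concatenation of $u$ and $v$ with the same suffix of length $\len{s}$, and the auxiliary claim applied to the finite word consisting of those rightmost blocks gives the result.

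For the auxiliary claim I induct on the number of blocks of $w$, with base case $w$ empty given directly by the definition of $s$ as a suffix of $v^{\infty}$. The key observation driving the induction is that since $s$ is a suffix of $v^{\infty}$, every suffix of $s$ of length $k \leq \len{s}$ coincides with the suffix of $v^{\infty}$ of length $k$. For the inductive step, write $w = w' b$ with $b \in \{u, v\}$ and assume the suffix of length $\len{s}$ of $v^{\infty}w'$ is $s$. When $\len{b} \geq \len{s}$ the suffix of length $\len{s}$ of $v^{\infty}w'b$ lies inside $b$, and the claim is immediate: for $b = v$, $s$ is a suffix of $v^{\infty}$ of length $\leq \len{v}$, hence a suffix of $v$; for $b = u$, $s$ is a suffix of $v^{\infty}u$ of length $\leq \len{u}$, hence a suffix of $u$. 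When $\len{b} < \len{s}$, the suffix of length $\len{s}$ of $v^{\infty}w'b$ equals (suffix of length $\len{s}-\len{b}$ of $v^{\infty}w'$) followed by $b$; by the inductive hypothesis this equals (suffix of length $\len{s}-\len{b}$ of $s$) followed by $b$, which by the key observation equals (suffix of length $\len{s}-\len{b}$ of $v^{\infty}$) followed by $b$. This is precisely the suffix of length $\len{s}$ of $v^{\infty}b$, which equals $s$ (using the definition of $s$ when $b = u$ and the identity $v^{\infty}v = v^{\infty}$ when $b = v$).

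I do not anticipate any serious obstacle; the argument is a straightforward induction with routine case analysis on whether the appended block has length $\geq \len{s}$ or not. The main conceptual move is the reduction to tracking $v^{\infty}w$ for finite $w$, which replaces the task of comparing two arbitrary left-infinite concatenations by the simpler task of iteratively checking that appending one block preserves the suffix of length $\len{s}$. Lemma~\ref{s} enters only indirectly, ensuring that $s$ is finite so that $\len{s}$ is a meaningful integer.
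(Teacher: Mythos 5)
Your proof is correct, but it takes a genuinely different route from the paper's. The paper's argument uses Lemma~\ref{s} quantitatively, obtaining $\len{s} < \len{vu}$ so that only the terminal patterns $uv^{q}$ ($q \geq 1$) and $uu$ need to be checked, and then verifies those cases by decomposing $u$ in terms of its root $v$ (writing $u = u^{\prime}v^{t}$ when $\len{s} \geq \len{u}$ and $u = u_{0}s^{\prime}v^{t}$ when $\len{s} < \len{u}$). You instead prove the stronger auxiliary claim that the length-$\len{s}$ suffix of $v^{\infty}w$ equals $s$ for \emph{every} finite concatenation $w$, by induction on the number of blocks; the only thing you need from Lemma~\ref{s} is finiteness of $s$, and you never touch the root structure of $u$. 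Your reduction from a left-infinite concatenation to its rightmost $\lceil \len{s}/\len{v} \rceil$ blocks is valid, both branches of the inductive step are sound ($\len{b} \geq \len{s}$ is handled by $s$ being a suffix of $v$ resp.\ $u$, and $\len{b} < \len{s}$ by the observation that short suffixes of $s$ agree with those of $v^{\infty}$), and the two subcases $b = v$ and $b = u$ close correctly using $v^{\infty}v = v^{\infty}$ and the definition of $s$. What the paper's approach buys is brevity; what yours buys is a cleaner invariant (appending a block preserves the length-$\len{s}$ suffix) that sidesteps the case analysis on how $s$ overlaps $u$, and as a bonus it isolates an auxiliary statement about $v^{\infty}w$ for finite $w$ that is in the same spirit as how the lemma is later applied.
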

\begin{proof}
By Lemma \ref{s}, $\len{s} < \len{vu}$ so we need only verify that $s$ is a suffix of $uv^{q}$ for $q \geq 1$ and of $uu$.  Since $v$ is a suffix of $u$, $uu$ has $vu$ as a suffix hence has $s$ as a suffix.  If $\len{s} \geq \len{u}$ then $v$ is a root of $u$ so $u = u^{\prime}v^{t}$ and $uv^{q} = u^{\prime}v^{t}v^{q}$ is a suffix of $v^{\infty}$ so $s$ is a suffix of $uv^{q}$.  If $\len{s} < \len{u}$ then $u = u_{0}s^{\prime}v^{t}$ for some (possibly empty) suffix $s^{\prime}$ of $v$ and $t \geq 1$ (as $s = s^{\prime}v^{t}$ has $v$ as a root and $\len{s} \geq \len{v}$ as $v$ is a suffix of $u$).  Then $uv^{q} = u_{0}s^{\prime}v^{t+q}$ has $s = s^{\prime}v^{t}$ as a suffix.
\end{proof}

\begin{lemma}\label{s3}
Let $v$ and $u$ be words and $s$ be the maximal common suffix of $v^{\infty}$ and $v^{\infty}u$.  Let $y$ and $z$ be suffixes of some (possibly distinct) concatenations of $u$ and $v$, both of length at least $\len{s}$.  Then for any word $w$, the maximal common suffix of $yvw$ and $zuw$ is $sw$.
\end{lemma}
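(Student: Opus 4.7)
The plan is to peel off the shared suffix $w$ first and then compute the maximal common suffix of $yv$ and $zu$ directly. For any words $A, B, w$ the maximal common suffix of $Aw$ and $Bw$ is $w$ appended to the maximal common suffix of $A$ and $B$, since matching characters from the right past the common $w$ block simply continues into $A$ and $B$. So it suffices to show that the maximal common suffix of $yv$ and $zu$ equals $s$.

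For the lower bound that $s$ is a common suffix, I would invoke Lemma~\ref{s2}: extend the finite concatenations of $u$ and $v$ of which $y$ and $z$ are suffixes into left-infinite concatenations by prepending $v$'s. Lemma~\ref{s2} gives that $s$ is a suffix of each extension, and $\len{y},\len{z}\ge\len{s}$ then forces $s$ to be a suffix of $y$ and of $z$. From $s$ being a suffix of $y$, $sv$ is a suffix of $yv$, and since $s$ and $sv$ are both suffixes of $v^{\infty}$, $s$ is a suffix of $sv$ and hence of $yv$. For $zu$, a short case-split on whether $\len{s}\le\len{u}$, together with the fact that the last $\len{s}$ characters of $z$ are $s$ and form a suffix of $v^{\infty}$, yields that $s$ is a suffix of $zu$.

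For the upper bound, let $a_v$ and $a_u$ be the characters at position $\len{s}+1$ from the right in $v^{\infty}$ and $v^{\infty}u$; by maximality of $s$ we have $a_v\neq a_u$. I would verify that $yv$ has $a_v$ at this position and $zu$ has $a_u$. For $yv$: since $\len{v}\ge 1$, position $\len{s}+1$ from the right falls inside the $sv$ suffix, and because $sv$ is a suffix of $v^{\infty}$ the character there matches $v^{\infty}$, giving $a_v$. For $zu$: if $\len{s}+1\le\len{u}$ the position lies inside the trailing $u$ block and matches $v^{\infty}u$ directly; if $\len{s}+1>\len{u}$ the position is at distance $\len{s}+1-\len{u}$ from the right end of $z$, inside the $s$-suffix of $z$, and hence matches the corresponding character of $v^{\infty}$. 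Either way the character is $a_u$. Since $a_v\neq a_u$, the maximal common suffix of $yv$ and $zu$ is exactly $s$, and so the maximal common suffix of $yvw$ and $zuw$ is $sw$.

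The main obstacle I expect is the case $\len{s}+1>\len{u}$ of the upper bound: here the disagreement position between $v^{\infty}$ and $v^{\infty}u$ falls inside the $z$ portion of $zu$ rather than inside the final $u$ block, so one must use the hypothesis $\len{z}\ge\len{s}$ carefully to ensure $z$ is pinned to agree with $v^{\infty}$ at exactly that position. Once this alignment is established the rest is essentially bookkeeping about right-to-left matching.
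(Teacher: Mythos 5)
Your proposal is correct and follows essentially the same route as the paper's proof: both use Lemma~\ref{s2} to pin down $sv$ as a suffix of $yv$ and $su$ as a suffix of $zu$, then conclude that $yv$ agrees with $v^{\infty}$ and $zu$ agrees with $v^{\infty}u$ far enough back that the maximal common suffix is exactly $s$. Your version is somewhat more explicit about the upper bound (the character comparison at position $\len{s}+1$) and the peeling of $w$, but these are details the paper leaves implicit rather than a different argument.
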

\begin{proof}
Since $y$ is a suffix of a concatenation of $u$ and $v$, so is $yv$.  Then $yv$ has $sv$ as a suffix by Lemma \ref{s2}.  Likewise $zu$ has $su$ as a suffix.  As $s$ is a suffix of $v^{\infty}$, then so is $yv$.  Likewise, $zu$ is a suffix of $v^{\infty}u$.  Therefore the maximal common suffix of $yv$ and $zu$ is $s$ (as they are both at least as long as $s$).
\end{proof}

\begin{lemma}\label{6}
If $p(q + 1) - p(q) = 1$ then there exists a bi-special word which has length in $[q, q+p(q)]$, has exactly two successors, and is the unique right-special word of its length and also the unique left-special word of its length.
\end{lemma}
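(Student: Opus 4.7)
The plan is to use the theta-graph structure of $G_{X,q}$ noted in the preliminaries and to follow how the unique right- and left-special words evolve as the length grows.

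First, by Lemma~\ref{RSlem} the hypothesis $p(q+1)-p(q)=1$ forces a unique right-special word $r_q\in L_q(X)$ with exactly two successors; the symmetric statement for predecessors gives a unique left-special word $\ell_q\in L_q(X)$ with exactly two predecessors. The Rauzy graph $G_{X,q}$ therefore decomposes as a (possibly empty) path $P_0$ from $\ell_q$ to $r_q$ together with two edge-disjoint paths $P_1,P_2$ from $r_q$ to $\ell_q$, with $|P_0|+|P_1|+|P_2|=p(q)+1$. If $\ell_q=r_q$ then $|P_0|=0$ and $r_q$ is already bi-special with exactly two successors, so $n=q$ works. I assume $|P_0|\ge 1$ from now on.

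I would argue by induction on $n\ge q$ that as long as the unique right-special word $r_n$ differs from the unique left-special word $\ell_n$, we still have $p(n+2)-p(n+1)=1$, with $r_{n+1}=cr_n$ and $\ell_{n+1}=\ell_n d$ for specific letters $c,d$, and both updates inherit the two-successors and two-predecessors properties. Indeed, any right-special word $w$ of length $n+1$ must have $r_n$ as its length-$n$ suffix, because the successors of $w$ are contained in the successors of $w_{[2..n+1]}$. Since $r_n\ne\ell_n$, $r_n$ is not left-special, so it has a unique preceding letter $c$ in $L(X)$, every occurrence of $r_n$ in $X$ is preceded by $c$, and hence $cr_n$ inherits both successors of $r_n$; this shows $r_{n+1}:=cr_n$ is the unique right-special word of length $n+1$ and has exactly two successors. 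Symmetrically, $\ell_{n+1}:=\ell_n d$ where $d$ is the unique successor of $\ell_n$ (well-defined because $\ell_n\ne r_n$) is the unique left-special word of length $n+1$ with two predecessors.

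The key observation is that under these updates $r_n$ corresponds to a walk in $G_{X,q}$ that extends one step backwards along the unique in-edge at each stage, so starting from the one-vertex walk at $r_q$ it traces $P_0$ in reverse; dually, $\ell_n$ traces $P_0$ forwards from $\ell_q$. For $n-q\le|P_0|$ these walks are the last and first $n-q+1$ vertices of $P_0$ respectively, and at $n=q+|P_0|$ both walks are exactly $P_0$, so $r_n=\ell_n$ as words. This word is bi-special, is the unique right- and left-special word of its length (by the induction), and has exactly two successors inherited from $r_q$. Since $|P_1|,|P_2|\ge 1$ we have $|P_0|\le p(q)-1$, so $n=q+|P_0|$ lies in $[q,q+p(q)]$. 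The main subtlety is verifying that the inductive step preserves uniqueness and the two-successors condition; once that is in hand, the theta-graph picture makes the appearance of a bi-special word at $n=q+|P_0|$ transparent.
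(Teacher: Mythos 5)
Your proof is correct and takes essentially the same approach as the paper: both identify the desired bi-special word as the word read along the path in $G_{X,q}$ from the unique left-special word to the unique right-special word, using that all intermediate extensions are forced. The paper constructs this word in one step as $yz$ (with $z$ the path label) while you build it letter by letter via induction, but the underlying argument and the resulting word are the same.
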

\begin{proof}
Let $w$ be the unique right-special word of length $q$ (which must have exactly two successors) and $y$ be the unique left-special word and write $z$ for the label of the path from $y$ to $w$ in the Rauzy graph.  Then $\len{z} \leq p(\len{w})$.  The word $yz$ is left-special and right-special and $\len{yz} = \len{y} + \len{z} \leq q + p(q)$.

If $x$ is a word of the same length as $yz$ which is right-special then $x$ must have $w$ as a suffix.  Then $x = x_{0}w$ and $\len{x_{0}} = \len{z}$.  Since there is only one path in the Rauzy graph ending at $w$ of length $\len{z}$ (due to $y$ being the unique left-special word), we have that $x = yz$.
\end{proof}

\begin{lemma}\label{ab}
Let $X$ be an infinite transitive subshift with $p(q) \leq \frac{4}{3}q$ for all sufficiently large $q$.  Then there exist words $a$ and $b$ which begin with different letters with $\len{a} < \len{b} < 2\len{a}$ and $p(q) < \frac{4}{3}q$ for all $q \geq \len{a}$ and where $a$ is a root of $b$ such that every $x \in X$ can be written in exactly one way as a concatenation of $a$ and $b$. If we define $s$ to be the maximal common suffix of $a^{\infty}$ and $a^{\infty}b$, there exists $t \geq 0$ so $sa^{t}$ is the unique right-special and left-special word of its length.
\end{lemma}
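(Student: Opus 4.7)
My strategy is to produce $a$ and $b$ as the two return words to a carefully chosen bi-special word $w$, read off from the Rauzy graph $G_{X,|w|}$, and extract every stated property from the rigid structure that low complexity forces on this graph.

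First I would choose a large $n_0$ at which $p(n_0+1)-p(n_0)=1$ and $p(q)<\tfrac{4}{3}q$ for all $q\ge n_0$. This is possible because, since $X$ is infinite, $p(q+1)\ge p(q)+1$ always; combined with $p(q)\le\tfrac{4}{3}q$ and Corollary~\ref{RScor} the set $\{q:p(q+1)-p(q)=1\}$ has lower density at least $\tfrac{2}{3}$. Strict inequality $p(q)<\tfrac{4}{3}q$ is automatic at $q$ not divisible by $3$ by integrality, and persistent equality at multiples of $3$ is ruled out by Corollary~\ref{RScor}. Lemma~\ref{6} then yields a bi-special $w$ of length $n\in[n_0,\,n_0+p(n_0)]$ with exactly two successors which is simultaneously the unique right- and left-special word of length $n$. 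Consequently in $G_{X,n}$ every vertex $\neq w$ has in- and out-degree $1$, while $w$ has in- and out-degree $2$; transitivity plus a short check (any interior vertex shared by both walks out of $w$ would create an extra special word) forces $G_{X,n}$ to be the bouquet of two simple loops $\gamma_0,\gamma_1$ meeting only at $w$. Let $a,b$ be the labels of these loops with $|a|\le|b|$.

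Then $p(n)=|a|+|b|-1$, the words $a$ and $b$ begin with the two distinct successor letters of $w$, and every point of $X$ is uniquely (up to shift) a concatenation of $a$'s and $b$'s. From $p(n)<\tfrac{4}{3}n$ one gets $|a|<n$ (otherwise $|a|+|b|\ge 2n>\tfrac{4}{3}n+1$), and overlapping the two copies of $w$ in $wa$ by $n-|a|>0$ positions shows $w$ has period $|a|$, so $a$ is a root of $w$ with $w=ua^{t}$ for some proper suffix $u$ of $a$ and $t\ge 1$.

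The crucial step is $|b|<n$. I would argue by contradiction: if $|b|\ge n$, the complexity bound yields $|a|\le n/3+1$, and using Lemmas~\ref{s},~\ref{s2},~\ref{s3} to track the maximal common suffix $s$ of $a^{\infty}$ and $a^{\infty}b$ along the forced path of the long loop $\gamma_1$, one produces an extra right-special word at some length $n+k\le n+|a|$ violating $p(n+k)<\tfrac{4}{3}(n+k)$. Once $|b|<n$, $b$ is the length-$|b|$ suffix of $w$ (by the same overlap argument applied to $wb$), hence of $a^{\infty}$, proving $a$ is a root of $b$. Then $|b|<2|a|$ is automatic: if $|b|\ge 2|a|$, the length-$|b|$ suffix of $a^{\infty}$ begins with a full copy of $a$, so $b$ begins with $a$'s first letter, contradicting the distinct first letters. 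Lemma~\ref{s2} shows $s$ is a suffix of every left-infinite concatenation of $a$ and $b$, and hence of $w$; the unique $t\ge 0$ with $|s|+t|a|=n$ gives $w=sa^{t}$, which by Lemma~\ref{6} is the unique right- and left-special word of length $n$. Finally, $p(q)<\tfrac{4}{3}q$ for $q\ge|a|$ is inherited from the choice of $n_0$ after checking $|a|\ge n_0$, which one arranges by taking $n_0$ large on a sequence where the shorter return word grows. The main obstacle is the step $|b|<n$: this is where the hypothesis must be used beyond length $n$ and where the combinatorics of how $\gamma_1$ interacts with the period-$|a|$ structure of $w$ becomes delicate.
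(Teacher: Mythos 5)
Your overall plan---take the bi-special word $w$ supplied by Lemma~\ref{6} and let $a,b$ be the labels of the two loops at $w$ in the Rauzy graph---is the same starting point as the paper's, but the proof breaks at exactly the step you flag as crucial. The claim that $\len{b} < n$ is not merely delicate: it is false, so no complexity contradiction can be extracted from $\len{b} \geq n$. The bound $\len{a}+\len{b} \leq p(n)+1 \leq \frac{4}{3}n+1$ forces $\len{b} \leq n$ only when $\len{a} > \frac{1}{3}n$; when the short return word is much shorter than $w$, the long return word genuinely exceeds $\len{w}$ without creating any extra right-special word. A concrete instance: in a Sturmian subshift containing long runs $0^N$, the word $w = 0^k$ (for $4 \leq k \leq N-2$, say) is the unique right- and left-special word of its length, its return words are $a = 0$ and $b = 10^k$ with $\len{b} = k+1 > \len{w}$, and yet $p(q) = q+1 < \frac{4}{3}q$ at every relevant length. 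The same phenomenon occurs inside the class of subshifts built in Proposition~\ref{decomp} whenever $n_j - m_j$ is large. This is precisely why the paper's proof splits into cases: it only deduces ``$u$ is a suffix of $w$'' when $\len{w} < 3\len{v}$, and in the opposite case it proves---by a separate complexity count over the lengths in $[\len{v}, 2\len{v})$, which would otherwise force $p(2\len{v}) \geq 3\len{v}+1$---that there exists a much shorter bi-special word $w_{00}$ with $\len{w_{00}} < 3\len{v}$, and restarts the argument with $w_{00}$, whose longer return word does satisfy the needed length bound. That descent to a shorter bi-special word is the idea your proposal is missing, and without it the argument does not close.

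A second, independent gap: even granting that $a$ is a root of $b$, distinct first letters do not force $\len{b} < 2\len{a}$. A suffix of $a^{\infty}$ of length at least $2\len{a}$ begins with the first letter of $a$ only when $\len{a}$ divides $\len{b}$; otherwise it begins in the interior of $a$ (for example $a = 01$ and $b = 10101$ is a suffix of $a^{\infty}$ of length $5 > 2\len{a}$ beginning with a different letter, and the Sturmian example above has $\len{b} = k+1 \gg 2\len{a} = 2$). The paper obtains $\len{a} < \len{b} < 2\len{a}$ not for the return words themselves but by writing the long return word as $u = u^{\star}v^{s}$ with $u^{\star}$ a nonempty proper suffix of $v$ and \emph{redefining} $b := u^{\star}v$, so that each $u$ in the decomposition becomes $ba^{s-1}$; unique decomposability into $a$ and $b$ then has to be rechecked rather than being inherited for free from the loop structure. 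Your remaining steps (the overlap argument making $a$ a root of $w$, the identification $w = sa^{t}$ via Lemma~\ref{s2}, and letting $\len{a} \to \infty$ to secure $p(q) < \frac{4}{3}q$ for $q \geq \len{a}$) do track the paper, but they all sit downstream of these two broken steps.
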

\begin{proof}
There exist infinitely many $q$ such that $p(q+1) - p(q) = 1$ by Corollary \ref{RScor}.  By Lemma \ref{6}, there exists a bi-special word $w$ with $\len{w}$ arbitrarily large which is the unique left-special and right-special word of its length and which has exactly two successors.  We may assume $p(q) \leq \frac{4}{3}q$ for all $q \geq \len{w}$.
We note that by \cite{ormespavlov}, $X$ is infinite and minimal.

Let $u$ and $v$ be the shortest two return words for $w$ (meaning $wu$ and $wv$ both have $w$ as a suffix) which will be the labels of the two paths from $w$ to itself in the Rauzy graph $G_{X,\len{w}}$ for words of length $\len{w}$, with $v$ being the shorter of the two.  All bi-infinite words in $X$ can be written in exactly one way as a concatenation of $v$ and $u$, as every such word must be the label of a path in the Rauzy graph (which visits the vertex $w$ infinitely many times by minimality of $X$), and the only two such paths have labels $v$ and $u$.

Since $\len{u} + \len{v} \leq p(\len{w}) + 1 \leq \frac{4}{3}\len{w} + 1$, we have $2\len{v} \leq \frac{4}{3}\len{w} + 1$ so $\len{v} \leq \frac{2}{3}\len{w} + \frac{1}{2}$. This is less than $\len{w}$ (since $\len{w} > 1$), and so $v$ is a root of $w$ by Lemma \ref{B}.  Note that $v$ cannot be a proper power of any word since if $v = v_{0}^{t}$ then $wv_{0}$ has $w$ as a suffix so $v_{0}$ is a root of $w$ making $v_{0}$ a return word for $w$ which is shorter than $v$.

Observe that if $\len{w} < 3 \len{v}$ then $\len{u} \leq \frac{4}{3} \len{w} + 1 - \len{v} < \frac{4}{3}\len{w} - \frac{1}{3} \len{w} + 1$ so $u$ is a suffix of $w$ making $v$ a root of $u$. We write $u = u^{\star}v^{s}$ for some proper suffix $u^{\star}$ of $v$ (which cannot be empty as $u$ and $v$ start with different letters) and define $a = v$ and $b = u^{\star}v$. Then as before, every bi-infinite word in $X$ can be written uniquely as a concatenation of $v = a$ and $u = ba^{s-1}$, hence the same is true of $a$ and $b$ (since $a = v$).  Clearly $a$ is a root of $b$, and $\len{a} < \len{b} < 2\len{a}$ as $0 < \len{u^{\star}} < \len{a}$.

So assume from here on that $\len{w} \geq 3 \len{v}$.

Suppose now that for every suffix $w_{0}$ of $w$ with $\len{v} \leq \len{w_{0}} < 2 \len{v}$, we have $p(\len{w_{0}}+1) - p(\len{w_{0}}) \geq 2$.  Then, by Corollary \ref{RScor}, $p(2\len{v}) = p(2 \len{v}) - p(\len{v}) + p(\len{v}) \geq 2(2 \len{v} - \len{v}) + \len{v} + 1 = 3\len{v} + 1$ so $\frac{p(2 \len{v})}{2\len{v}} > \frac{3}{2}$, contradicting our hypothesis.

Therefore there exists $w_{0}$ a suffix of $w$ with $\len{v} \leq \len{w_{0}} < 2 \len{v}$ which is the unique right-special word of its length and it has exactly two successors.

Since $w_{0}$ is a suffix of $w$, $v$ is a root of $w_{0}$.
As there must also be a unique left-special word of the same length as $w_{0}$, $w_{0}$ extends to a bi-special word $w_{00}$ which is the unique left-special and right-special word of its length and which has exactly two successors (Lemma \ref{6}).  Now $\len{w_{00}} \leq \len{w_{0}} + \len{v}$ since the path from the left-special to the right-special vertex in the Rauzy graph for words of length $\len{w_{0}}$ must be no longer than $v$ (as $w_{0}v$ must have $w_{0}$ as a suffix).  Then $\len{w_{00}} < 2 \len{v} + \len{v} = 3\len{v} \leq \len{w}$ so $w_{00}$ is a proper suffix, and prefix, of $w$.

Let $v_{0}$ and $u_{0}$ be the shortest return words for $w_{00}$ with $v_{0}$ beginning with the same letter as $v$ (and $u_0$ beginning with a different letter). Then all bi-infinite words in $X$ are concatenations of $u_{0}$ and $v_{0}$.  Since $v$ is a return word for $w_{00}$, $v$ must be a concatenation of $u_{0}$ and $v_{0}$ which means that $v_{0}$ must be a prefix of $v$ by virtue of sharing a common first letter. Likewise $u_{0}$ must be a prefix of $u$.

Since $v$ is a suffix of $w$, then $vv_{0}$ has $v$ as a suffix so $v_{0}$ is a root of $v$ by Lemma \ref{B}.  Write $v = v^{\prime}v_{0}^{t}$ for some $t \geq 1$ and $v^{\prime}$ a proper suffix of $v_{0}$.  Then $v_{0} = v^{\prime\prime}v^{\prime}$ so $v$ has $v^{\prime}v_{0} = v^{\prime}v^{\prime\prime}v^{\prime}$ as a prefix.  But $v_{0}$ is also a prefix of $v$ so both $v^{\prime}v^{\prime\prime}$ and $v^{\prime\prime}v^{\prime}$ are prefixes of $v$.  Therefore they are equal so by Lemma \ref{B1} both are powers of the same word.  But then $v$ is a power of that word and it cannot be a proper power of any word so either $v^{\prime}$ or $v^{\prime\prime}$ is empty and so $v_{0} = v$.

If $\len{u_{0}} \leq \len{v}$ then $u_{0}$ is a root of $w_{00}$ hence of $v$.  Write $v = v^{\star}u_{0}^{s}$ for some proper suffix $v^{\star}$ of $u_{0}$ (which cannot be empty as $v$ begins with a different letter than $u$) and $s \geq 1$.
Taking $a = u_{0}$ and $b = v^{\star}u_{0}$, then every bi-infinite word in $X$ is a concatenation of $u_{0} = a$ and $v = ba^{s-1}$.  Clearly $a$ is a root of $b$ and $\len{a} < \len{b} < 2\len{a}$.

So we are left with $\len{u_{0}} > \len{v}$.  Here $\len{u_{0}} \leq p(\len{w_{00}}) + 1 - \len{v}
< \frac{4}{3}\len{w_{00}} + 1 - \frac{1}{3}\len{w_{00}}$ as $\len{w_{00}} < 3\len{v}$.  Therefore $\len{u_{0}} \leq \len{w_{00}}$.  So $u_{0}$ is a suffix of $w$ hence $v$ is a root of $u_{0}$.  Writing $u_{0} = u^{\star}v^{s}$ for some proper suffix $u^{\star}$ of $v$ and $s \geq 1$ then taking $a = v$ and $b = u^{\star}v$, just as before we have that every bi-infinite word in $X$ is a unique concatenation of $v = a$ and $u_{0} = ba^{s-1}$, hence of $a$ and $b$. As before, clearly $a$ is a root of $b$ and $\len{a} < \len{b} < 2\len{a}$.

In all cases, one of $a,b$ is a prefix of $u$ and the other is a prefix of $v$.  Since $u$ and $v$ begin with different letters, $a$ and $b$ begin with different letters. It remains to verify the claim about the maximal common suffix $s$ and that $a$ may be taken arbitrarily long.

In the case when $a$ is a root of $w$ (and $w_{00}$ was not introduced), set $w_{00} = w$ and $t=0$.
Then in all cases, $a$ is a root of $w_{00}$ as $a$ is either $v$ or $u_{0}$ so $w_{00}$ is a suffix of $a^{\infty}$.  In all cases, $ba^{t}$ is the other return word for $w_{00}$ for some $t \geq 0$.  Then $w_{00}a^{\ell}ba^{t}$ has $w_{00}$ as a suffix for all $\ell \geq 0$ so $w_{00}$ is a suffix of $a^{\infty}ba^{t}$.  Since $w_{00}$ is left-special and $a$ and $ba^{t}$ are its two return words, the maximal common suffix of $a^{\infty}$ and $a^{\infty}ba^{t}$ must be no longer than $w_{00}$.  Therefore $w_{00} = sa^{t}$ where $s$ is the maximal common suffix of $a^{\infty}$ and $a^{\infty}ba$.

Let $\{ w_{\ell} \}$ be a sequence of such bi-special words with $\len{w_{\ell}}$ increasing to $\infty$ and let $\{ a_{\ell} \}$ and $\{ v_{\ell} \}$ be the corresponding $a$ and $v$ above.  Since either $a = v$ or $a = u_{0}$, and in both cases it is a root of $w_{00}$, $a_{\ell}$ is a root of $v_{\ell}$.

Since $w_{\ell}$ is the unique right-special word of its length, it is a suffix of $w_{\ell+1}$ and therefore $v_{\ell}$ is a suffix of $v_{\ell+1}$.  If $\len{v_{\ell}}$ were bounded then there would exist $L$ such that $v_{\ell} = v_{L}$ for $\ell \geq L$ but then $v_{L}$ would be a root of $w_{\ell}$ for $\ell \geq L$ so $v_{L}^{\infty} \in X$, a contradiction.  So $\len{v_{\ell}} \to \infty$.  Likewise, since $a_{\ell}$ is a root of $v_{\ell}$, if $\len{a_{\ell}}$ were bounded then for some $L$ we would have $a_{L}^{\infty} \in X$.  Therefore $\len{a_{\ell}} \to \infty$ so we may take $a$ and $b$ such that for all $q \geq \len{a}$, we have $p(q) < \frac{4}{3}q$.
\end{proof}

The following lemma is our main tool to recursively demonstrate the structure from Proposition~\ref{decomp}. The key is control over the lengths of the suffixes from Lemmas~\ref{s} and \ref{s2}.

\begin{lemma}\label{induction}
Let $X$ be an infinite transitive subshift with $\frac{p(q)}{q} < \frac{4}{3}$ for $q > N$.  Let $u$ and $v$ be words with $N < \len{v} < \len{u}$ such that $v$ is a suffix of $u$ and $v$ is not a prefix of $u$.  Let $s$ be the maximal common suffix of $v^{\infty}$ and $v^{\infty}u$ and let $p$ be the maximal common prefix of $u$ and $v$.

Assume that $\len{p} + \len{s} < \len{u} + \len{v}$ and $\len{p} + \len{s} < 3\len{v}$ and that every bi-infinite word in $X$ can be written as a concatenation of $u$ and $v$.  Then there exist $0 < m < n$ such that every concatenation of $u$ and $v$ which represents a point in $X$ has only $v^{m-1}$ and $v^{n-1}$ appearing between nearest occurrences of $u$ and satisfying:
\begin{itemize}
\item $n \leq 2m$ whenever $m > 1$;
\item $n < 1.9m$ whenever $m > 4$;
\item $n \leq 3$ whenever $m = 1$
\end{itemize}
and the words $sv^{n-2}p$ and $sv^{m-1}uv^{m-1}p$ are right-special.
\end{lemma}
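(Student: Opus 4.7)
The plan is to analyze right-special words of $X$ via the unique block decomposition into $u$'s and $v$'s guaranteed by the hypothesis, then use the complexity bound $p(q)/q<4/3$ to constrain the possible $v$-run lengths.

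First, the hypothesis $|p|+|s|<|u|+|v|$ together with Lemma~\ref{s} prevents $u$ and $v$ from being powers of a common word, so Lemma~\ref{s2} gives that $s$ is a suffix of every left-infinite concatenation of $u$ and $v$ represented by a point of $X$. Dually, $p$ is the longest prefix of a new block during which its identity (as $u$ or $v$) is unresolved. Consequently, any right-special word in $L(X)$ of length greater than $|s|+|p|$ corresponds to a reading position exactly $|p|$ letters into a new block of the decomposition; such a word must end in $sp$, and its two continuations are $u_{|p|+1}$ and $v_{|p|+1}$, which differ by maximality of $p$. Lemma~\ref{s3} certifies that the maximal common suffix at the branch point equals the right-special word itself. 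In particular, for each valid number $g$ of $v$-blocks between two consecutive $u$-blocks, $sv^{g-1}p$ is right-special at length $|s|+(g-1)|v|+|p|$, and at the next level, combinations such as $sv^{g_1-1}uv^{g_2-1}p$ are right-special as well.

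Using Corollary~\ref{RScor}, I would then bound the number of distinct valid gap-counts. If three distinct values $g_1<g_2<g_3$ occurred, the right-special words $sv^{g_i-1}p$ would appear at three distinct lengths in an interval of size $(g_3-g_1)|v|$, and the $3^2$ second-level combinations $sv^{g_i-1}uv^{g_j-1}p$ would add further right-special words at higher lengths. Summing these contributions into $p(q)-p(r)$ eventually exceeds $(q-r)/3$, violating $p(q)<4q/3$. Hence exactly two valid gap-counts exist, labelled $m-1<n-1$, so the only runs between nearest $u$'s are $v^{m-1}$ and $v^{n-1}$.

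The numerical constraints $n\leq 2m$ (for $m>1$), $n<1.9m$ (for $m>4$), and $n\leq 3$ (for $m=1$) come from applying this counting argument at the super-block level, with building blocks $U=uv^{m-1}$ and $V=uv^{n-1}$. The ratio $|V|/|U|$ controls how many right-special words lie in ranges of length comparable to $|V|-|U|$: the second-level right-special words of form $sv^{g_i-1}uv^{g_j-1}p$ must fit inside the complexity increment allowed by $p(q)<4q/3$. A direct calculation yields $|V|<2|U|$ (i.e., $n\leq 2m$) when $m>1$, a sharper bound $n<1.9m$ when $m>4$ (where the absolute slack in the complexity budget grows with $|U|$), and the tight constraint $n\leq 3$ when $m=1$ (where $|U|=|u|$ is minimal and the counting is tightest). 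This multi-scale bookkeeping is the main obstacle, since the $4/3$ threshold leaves almost no slack once several levels interact, and the precise constants $2$, $1.9$, and $3$ must be extracted by carefully arithmetic on the complexity increments.

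Finally, the explicit right-specialness of $sv^{n-2}p$ and $sv^{m-1}uv^{m-1}p$ is verified directly. For $sv^{n-2}p$, continuing $p$ as $v$ yields a $v^{n-1}$ run (valid for the long gap count), while continuing as $u$ places the initial $s$ at the end of the first $v$-block inside a $v^{n-1}$ run, so $sv^{n-2}u$ embeds in the valid configuration $uv^{n-1}u$ (using that $s$ is a suffix of $uv$ by Lemma~\ref{s2}). For $sv^{m-1}uv^{m-1}p$, continuing $p$ as $u$ closes the second $v$-run at length $m-1$ (the short gap), while continuing as $v$ extends it toward length $n-1$ (the long gap); both are valid block configurations. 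In each case Lemma~\ref{s3} confirms that the maximal common suffix of the two continuations in $L(X)$ is exactly the asserted right-special word.
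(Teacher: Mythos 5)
Your overall strategy matches the paper's: exploit the unique $u,v$-decomposition, identify right-special words of the form $s(\cdots)p$, and count them against the $\frac{4}{3}$ bound via Corollary~\ref{RScor}. But two essential steps are missing. First, Corollary~\ref{RScor} only beats the bound $p(q) < \frac{4}{3}q$ if you exhibit \emph{two distinct right-special words of the same length} for every length in a long interval. Your argument for ruling out three gap-counts only produces right-special words $sv^{g_i-1}p$ at three \emph{different} lengths (plus vaguely invoked ``second-level combinations''), which by itself contributes nothing beyond $p(q) \geq p(r) + (q-r)$. The paper's mechanism is to take $x$ minimal among gap-counts, show that $sv^{x}uv^{x}p$ and $sv^{x}uv^{y}p$ are \emph{both} right-special, and use Lemma~\ref{s3} to show their maximal common suffix is only $sv^{x}p$; their unequal suffixes then give two right-special words at every length in $(\len{sv^{x}p}, \len{sv^{x}uv^{x}p}]$, and the resulting estimate $1 + \frac{x\len{v}+\len{u}}{\len{p}+\len{s}+2x\len{v}+\len{u}} \geq \frac{4}{3}$ (using $\len{p}+\len{s} < \len{u}+\len{v}$) gives the contradiction. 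You never construct such a pair sharing only a short common suffix.

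Second, the numerical constraints $n \leq 2m$, $n < 1.9m$, $n \leq 3$ — which you yourself identify as ``the main obstacle'' — are asserted to follow from ``careful arithmetic'' at a super-block level but are never derived, and your proposed super-blocks $U = uv^{m-1}$, $V = uv^{n-1}$ are not the objects the paper uses. The actual derivation compares the two right-special words $sv^{n-2}p$ and $sv^{m-1}uv^{m-1}p$ (maximal common suffix $sv^{m-1}p$, again via Lemma~\ref{s3}), first rules out $\len{sv^{m-1}uv^{m-1}p} \leq \len{sv^{n-2}p}$ by a separate complexity estimate, and then in the remaining case obtains $\frac{p(\len{sv^{n-2}p})}{\len{sv^{n-2}p}} > 1 + \frac{n-m-1}{n+1}$ using the hypothesis $\len{p}+\len{s} < 3\len{v}$; the three constants drop out of elementary case analysis on $\frac{n-m-1}{n+1} \geq \frac{1}{3}$. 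Without that case split and that specific inequality, the claimed bounds are unsupported. (You also omit the preliminary observation, via minimality from \cite{ormespavlov}, that at least two distinct gap-counts must occur.)
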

\begin{proof}
For brevity, whenever we refer to a `concatenation' in the following, it is a concatenation of $u,v$ which represents a point of $X$ or a subword of such a point. We again note that by \cite{ormespavlov}, $X$ is infinite and minimal, and so no concatenation can contain infinitely many consecutive $v$. Similarly, if there was only a single number of $v$ which may occur between nearest occurrences of $u$, then $X$ would be finite, contradicting our assumptions. So there are at least two different numbers of $v$ which can occur between nearest occurrences of $u$.

Suppose for a contradiction that $uv^{x}u$ and $uv^{y}u$ and $uv^{z}u$ all appear in some concatenations and that $x < y < z$. We may assume that $x$ is the minimal value such that $uv^{x}u$ appears in a concatenation.
%rp{(The following sentence seems redundant)}
%\sout{We may assume that $x \geq 0$ is the smallest value such that $uv^{x}u$ appears in a concatenation.} 
Since $uv^{x}u$ and $uv^{y}u$ are necessarily preceded by $v^{x}$ (due to $x$ being minimal), then $v^{x}uv^{x}u$ and $v^{x}uv^{x}v$ both appear in concatenations (as $y > x$).  By Lemma \ref{s2} (as $v$ is not a prefix of $u$, they cannot be powers of the same word), $s$ is a suffix of every left-infinite concatenation. This means that $v^x u v^x u$ and $v^x u v^x v$ are both preceded by $s$ in the bi-infinite concatenations they respectively appear in, and so $sv^{x}uv^{x}$ can be followed by either $u$ or $v$, meaning that $sv^{x}uv^{x}p$ is right-special (since the letters appearing after $p$ in $u$ and $v$ are distinct by maximality of $p$).

Likewise, $v^{x}uv^{y}u$ and $v^{x}uv^{y}v$ appear in some concatenations (due to $z > y$) so $sv^{x}uv^{y}p$ is also right-special.  By Lemma \ref{s3}, the maximal common suffix of $sv^{x}uv^{x}p$ and $sv^{x}uv^{y}p$ is $sv^{x}p$. Therefore there are at least two right-special words of length $\ell$ for $\len{sv^{x}p} < \ell \leq \len{sv^{x}uv^{x}p}$ (namely, the unequal suffixes of $sv^{x}uv^{x}p$ and $sv^{x}uv^{y}p$ of length $\ell$).  Then, since $\len{p} + \len{s} < \len{v} + \len{u} < 2\len{u}$, by Corollary~\ref{RScor}
\begin{align*}
\frac{p(\len{sv^{x}uv^{x}p})}{\len{sv^{x}uv^{x}p}} &\geq 1 + \frac{\len{sv^{x}uv^{x}p} - \len{sv^{x}p}}{\len{sv^{x}uv^{x}p}}
= 1 + \frac{x\len{v} + \len{u}}{\len{p} + \len{s} + 2x\len{v} + \len{u}} 
> 1 + \frac{x\len{v} + \len{u}}{2\len{u} + 2x\len{v} + \len{u}}.
\end{align*}
The final expression is increasing for $x \geq 0$, hence is at least $\frac{4}{3}$ (its value at $x = 0$), contradicting our hypothesis that $p(q)/q < \frac{4}{3}$ for $q > N$. Therefore such $x < y < z$ cannot exist so there are only two distinct values $x$ and $y$.  Writing $x = m - 1$ and $y = n - 1$ then shows that $v^{m-1}$ and $v^{n-1}$ are the only words appearing between occurrences of $u$ in a concatenation.

By similar reasoning as above, we observe that $sv^{m-1}uv^{m-1}p$ is right-special and that $sv^{n-2}p$ is also right-special since $sv^{n-1}u$ appears in a concatenation and it has $sv^{n-2}v$ as a prefix and $sv^{n-2}u$ as a suffix. Again by similar reasoning as above, their maximal common suffix is $sv^{m-1}p$.

Suppose $\len{sv^{m-1}uv^{m-1}p} \leq \len{sv^{n-2}p}$.  Then there are at least two right-special words of length $\ell$ for $\len{sv^{m-1}p} < \ell \leq \len{sv^{m-1}uv^{m-1}p}$ so, by Corollary~\ref{RScor} and the fact that $\len{p} + \len{s} < \len{u} + \len{v} < 2\len{u}$,
\[
\frac{p(\len{sv^{m-1}uv^{m-1}p})}{\len{sv^{m-1}uv^{m-1}p}}
\geq 1 + \frac{(m-1)\len{v} + \len{u}}{\len{p} + \len{s} + 2(m-1)\len{v} + \len{u}}
> 1 + \frac{(m-1)\len{v} + \len{u}}{2(m-1)\len{v} + 3\len{u}} \geq \frac{4}{3}
\]
which contradicts our hypothesis.

So instead $\len{sv^{n-2}p} < \len{sv^{m-1}uv^{m-1}p}$.  Then there are at least two right-special words of length $\ell$ for $\len{sv^{m-1}p} < \ell \leq \len{sv^{n-2}p}$ so, by Corollary~\ref{RScor} and the fact that $\len{p} + \len{s} < 3\len{v}$,
\[
\frac{p(\len{sv^{n-2}p})}{\len{sv^{n-2}p}} \geq 1 + \frac{(n - m - 1)\len{v}}{\len{p} + \len{s} + (n-2)\len{v}}
> 1 + \frac{(n-m-1)\len{v}}{3\len{v} + (n-2)\len{v}}
= 1 + \frac{n-m-1}{n+1}.
\]

Consider first when $m=1$.  If $n \geq 4$ then $\frac{n-m-1}{n+1} = \frac{n-2}{n+1} \geq \frac{2}{5} > \frac{1}{3}$ which contradicts our hypothesis.  %Moreover, if $n=3$ then $\frac{n-m-1}{n+1} = \frac{1}{4}$ so this cannot occur when $\frac{p(q)}{q} \leq \frac{5}{4}$.

Now consider when $m > 1$.  If $n \geq 2m + 1$ then $\frac{n-m-1}{n+1} \geq \frac{2m+1 - m - 1}{2m+1+1} = \frac{m}{2m+2} \geq \frac{2}{2(2)+2} = \frac{1}{3}$ contradicting our hypothesis.  So $n \leq 2m$ when $m > 1$.

Finally, consider when $m \geq 5$.  Suppose $n \geq 1.9m$.  Then
\[
\frac{n-m-1}{n+1} \geq \frac{1.9m - m - 1}{1.9m + 1} = \frac{0.9m - 1}{1.9m + 1} \geq \frac{4.5 - 1}{9.5 + 1} = \frac{1}{3}
\]
contradicting our hypothesis.  So $n < 1.9m$ whenever $m > 4$.
\end{proof}

\begin{proof}[Proof of Proposition~\ref{decomp}]
We prove by induction that such sequences exist, using the notation $v_k := \pi(\rho_{k-1}(0))$ and $u_k := \pi(\rho_{k-1}(1))$. 

By \cite{ormespavlov}, $X$ is minimal.  Write $s_k$ for the maximal common suffix of $v_{k}^{\infty}$ and $v_{k}^{\infty}u_{k}$ and $p_{k}$ for the maximal common prefix of $v_{k}$ and $u_{k}$.

Our inductive hypotheses are the following:
\begin{itemize}
\item all $x \in X$ can be written as concatenations of $u_k$ and $v_k$;
\item $v_k$ is a suffix of $u_k$ and is not a prefix of $u_k$;
\item $\len{p_k} + \len{s_k} < \min(\len{v_k} + \len{u_k}, 3\len{v_k})$;
\item $v_{k} = (\pi \circ \tau_{m_{1},n_{1}} \circ \cdots \circ \tau_{m_{k-1},n_{k-1}})(0)  = \pi(\rho_{k-1}(0))$ and $u_{k} = (\pi \circ \tau_{m_{1},n_{1}} \circ \cdots \circ \tau_{m_{k-1},n_{k-1}})(1) = \pi(\rho_{k-1}(1))$.
\end{itemize}

Since $\limsup \frac{p(q)}{q} < \frac{4}{3}$, eventually $p(q) < \frac{4}{3}q$.  Lemma \ref{ab} gives $v_{1}$ and $u_{1}$ with 
$v_{1}$ a suffix of $u_{1}$ and $\len{v_1} < \len{u_{1}} < 2\len{v_{1}}$ which start with different letters such that every infinite word is a concatenation of $u_{1}$ and $v_{1}$.  By Lemma \ref{s}, $\len{s_{1}} < \len{v_{1}u_{1}} < 3\len{v_{1}}$.  As $u_{1}$ and $v_{1}$ begin with different letters, $p_1$ is empty.  Therefore the base case is established by setting $\pi(0) = v_{1}$ and $\pi(1) = u_{1}$.  Lemma \ref{ab} ensures that $p(q) < \frac{4}{3}q$ for all $q \geq \len{\pi(0)}$.

Given $v_k$ and $u_k$, by Lemma \ref{induction} there exist $0 < m_k < n_k$ such that every infinite word is a concatenation of $v_{k+1} = v_{k}^{m_{k}-1}u_{k}$ and $u_{k+1} = v_{k}^{n_{k}-1}u_{k}$.  
Observe that $u_{k+1} = v_{k}^{n_{k}-1}u_{k} = 
(\pi(\rho_{k-1}(0)))^{n_{k}-1} \pi(\rho_{k-1}(1)) = 
\pi(\rho_{k-1}(0^{n_k - 1} 1)) = 
\pi(\rho_{k-1}(\tau_{m_k,n_k}(1))) = \pi(\rho_k(1))$ and similarly 
$v_{k+1} = \pi(\rho_k(0))$.

Clearly $v_{k+1}$ is a suffix of $u_{k+1}$.  If $v_{k+1}$ were a prefix of $u_{k+1}$ then $u_{k}$ would be a prefix of $v_{k}^{n_{k}-m_{k}}u_{k}$ but that would make $v_{k}$ a prefix of $u_{k}$.  So $v_{k+1}$ is not a prefix of $u_{k+1}$, and $p_{k+1} = v_{k}^{m_{k}-1}p_{k}$.

By definition, $s_{k+1}$ is the maximal common suffix of $v_{k+1}^{\infty}$ and $v_{k+1}^{\infty} u_{k+1}$. We can rewrite these as $y = \ldots u_k v_k^{m_k - 1} u_k$ and $z = \ldots v_k v_k^{m_k - 1} u_k$. These share a suffix of $v_k^{m_k - 1} u_k$, so we must just find the maximal common suffix of the portions with this removed, i.e. $y' = \ldots u_k$, a concatenation ending with $u_k$, and $z' = \ldots v_k$, a concatenation ending with $v_k$. But $y'$ then agrees with $v_k^{\infty} u_k$ on a suffix of length $|u_k| + |s_k| > |s_k|$ by Lemma~\ref{s2} and $z'$ agrees with $v_k^{\infty}$ on a suffix of length $|v_k| + |s_k| > |s_k|$ by Lemma~\ref{s2}, meaning that $y'$ and $z'$ have maximal common suffix $s_k$. Therefore, $s_{k+1} = s_{k}v_{k}^{m_{k}-1}u_{k} = s_{k}v_{k+1}$. Then,
%We also have $s_{k+1} = s_{k}v_{k}^{m_{k}-1}u_{k}$ and $p_{k+1} = v_{k}^{m_{k}-1}p_{k}$.  Then
\[
\len{p_{k+1}} + \len{s_{k+1}} = \len{p_k} + \len{s_k} + 2(m_{k}-1)\len{v_k} + \len{u_k}
< (2m_k - 1)\len{v_k} + 2\len{u_k}
= 2\len{v_{k+1}} + \len{v_k}
\]
and since $\len{v_{k+1}} + \len{v_k} \leq \len{u_{k+1}}$ and $\len{v_k} < \len{v_{k+1}}$, the inductive hypotheses are verified.

Lemma \ref{induction} gives that $n_k \leq 2m_k$ when $m_k > 1$ and $n_k \leq 1.9m_k$ when $m_k > 4$ and that $n_k \leq 3$ when $m_k = 1$.% and that if $\limsup \frac{p(q)}{q} < \frac{5}{4}$ then $n_k = 2$ whenever $m_k = 1$.

Suppose that $m_{k} = 1$ and $n_{k} = 3$ and  $n_{k-1} \geq m_{k-1} + 2$.  By Lemma \ref{induction}, the words $s_{k}v_{k}p_{k}$ and $s_{k-1}v_{k-1}^{n_{k-1}-2}p_{k}$ and $s_{k}u_{k}p_{k}$ are right-special.  By Lemma \ref{s3}, the maximal common suffix of $s_{k}v_{k}p_{k}$ and $s_{k}u_{k}p_{k}$ is $s_{k}p_{k}$.  Using Lemma \ref{s2} and that $p_{k} = v_{k-1}^{m_{k-1}-1}p_{k-1}$, both $s_{k}v_{k}p_{k}$ and $s_{k}u_{k}p_{k}$ have $s_{k-1}u_{k-1}v_{k-1}^{m_{k-1}-1}p_{k-1}$ as a suffix.  By Lemma \ref{s3}, the maximal common suffix of either of them and $s_{k-1}v_{k-1}^{n_{k-1}-2}p_{k-1}$ is then $s_{k-1}v_{k-1}^{m_{k-1}-1}p_{k-1}$.  Therefore there are least
$
\len{s_{k}v_{k}p_{k}} + \len{s_{k}v_{k}p_{k}} - \len{s_{k}p_{k}} + \len{s_{k-1}v_{k-1}^{n_{k-1}-1}p_{k-1}} - \len{s_{k-1}v_{k-1}^{m_{k-1}-1}p_{k-1}} 
$
right-special words of length at most $\len{s_{k}v_{k}p_{k}}$.

Since $p_{k} = v_{k-1}^{m_{k}-1}p_{k-1}$, $s_{k} = s_{k-1}v_{k}$ and $\len{p_{k-1}} + \len{s_{k-1}} < 3\len{v_{k-1}}$,
\[
\len{p_{k}} + \len{s_{k}} = (m_{k-1}-1)\len{v_{k-1}} + \len{v_{k}} + \len{p_{k-1}} + \len{s_{k-1}} < \len{v_{k}} + (m_{k-1} + 2)\len{v_{k-1}} = 2\len{v_{k}} - \len{u_{k}} + 3\len{v_{k-1}}.
\]
Therefore, since $n_{k-1} \geq m_{k-1} + 2$,
\begin{align*}
\frac{p(\len{s_{k}v_{k}^{n_{k}-2}p_{k}})}{\len{s_{k}v_{k}^{n_{k}-2}p_{k}}}
&\geq 1 + \frac{\len{v_{k}} + (n_{k-1} - m_{k-1} - 1)\len{v_{k-1}}}{\len{v_{k}} + \len{p_{k}} + \len{s_{k}}} 
> 1 + \frac{\len{v_{k}} + \len{v_{k-1}}}{3\len{v_{k}} - \len{u_{k-1}} + 3\len{v_{k-1}}}
> 1 + \frac{1}{3}
\end{align*}
contradicting our hypothesis.  So if $m_{k}=1$ and $n_{k} = 3$ then $n_{k-1} = m_{k-1} + 1$.

Since $s_{1}v_{1}^{t}$ is the unique right-special and unique left-special word of its length for some $t \geq 0$ (Lemma \ref{ab}) and $u_{1}v_{1}^{t}$ and $v_{1}$ are the two return words for $s_{1}v_{1}^{t}$, we have that $t \leq m_{1} - 1$ as $u_{1}$ is always followed by $v_{1}^{t}$.  Since $s_{1}v_{1}^{t}$ is left-special, $u_{1}v^{t}s_{1}v^{t}$ must appear meaning that $t = m_{1} - 1$.  Therefore any right-special word of length at least $\len{s_{1}v_{1}^{m_{1}-1}}$ must have $s_{1}v_{1}^{m_{1}-1}$ as a suffix.  As the return words for $s_{1}v_{1}$ are $v_{1}$ and $u_{1}v_{1}^{m_{1}-1}$, then every right-special word of at least that length is a suffix of a concatenation of $u_{1}$ and $v_{1}$.

Finally, since $v_k$ is in the language for all $k$, there exists a two-sided sequence containing $x^{(m_k), (n_k)} = \lim v_{k}$. Then since $X$ is minimal, $X$ is the orbit closure of $x^{(m_k), (n_k)}$.
\end{proof}

\begin{remark}\label{ps}
In future arguments, for any subshift $X$ satisfying the structure of Proposition~\ref{decomp}, we use the notation of the proof, i.e. $u_k = \pi(\rho_{k-1}(1))$, $v_k = \pi(\rho_{k-1}(0))$, $p_k$ is the maximal prefix of $v_k$ and $u_k$, and $s_k$ is the maximal suffix of $v_k^{\infty}$ and $v_k^{\infty} u_k$. In addition, as shown in the proof of Proposition~\ref{decomp}, the sequence $(p_k)$ satisfies the recursion $p_{k+1} = v_{k}^{m_{k}-1}p_{k} = v_k p_{k+1}$, the sequence $(s_k)$ satisfies the recursion $s_{k+1} = s_{k}v_{k+1}$, and $|p_k| + |s_k| < \min(|u_k| + |v_k|, 3|v_k|)$ for all $k$.
\end{remark}

\begin{remark}\label{ud}
By induction on $k$, each substitution $\pi \circ \rho_k$ is uniquely decomposable, in the sense that each $x \in X$ can be decomposed uniquely into words $(\pi \circ \rho_k)(a)$ for $a \in \{0,1\}$. For $k = 0$, this follows from Lemma 2.9 since
$\pi(0) = v_1$ and $\pi(1) = u_1$ were constructed using that lemma. If $\pi \circ \rho_k$ is uniquely decomposable, then every $x$ is representable uniquely as a concatenation of $(\pi \circ \rho_k)(0)$ and $(\pi \circ \rho_k)(1)$, and then the same must be true
of $(\pi \circ \rho_{k+1})(0) = (\pi \circ \rho_k)(0)^{m_{k+1} - 1} (\pi \circ \rho_k)(1)$ and 
$(\pi \circ \rho_{k+1})(1) = (\pi \circ \rho_k)(0)^{n_{k+1} - 1} (\pi \circ \rho_k)(1)$ (since each of these contains 
$(\pi \circ \rho_k)(1)$ exactly once.)
\end{remark}

\section{Subshifts with \texorpdfstring{$C < \nicefrac{4}{3}$}{C < 4/3} have discrete spectrum}

\begin{theorem}\label{disc}
If $X$ is an infinite transitive subshift with $\limsup \frac{p(q)}{q} < \frac{4}{3}$, then $X$ is uniquely ergodic with unique measure which has discrete spectrum.
\end{theorem}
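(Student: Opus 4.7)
The plan is to leverage the substitutive S-adic structure from Proposition \ref{decomp} together with the balanced pair algorithm hinted at in the introduction. Unique ergodicity is essentially immediate: since $C < 4/3 < 3$, it follows from Boshernitzan's bound \cite{boshernitzan}, so the bulk of the work is establishing discrete spectrum, i.e., that $L^2(X,\mu)$ is spanned by measurable eigenfunctions of $\sigma$.

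The concrete target, following the balanced pair approach, is to show that for $\mu\times\mu$-typical pairs $(x,y)$ one can find shifts $N_k\to\infty$ such that $x$ and $\sigma^{N_k}y$ agree on ever-longer central windows, with the shifts $N_k$ controlled by a rotation on a compact group. The combinatorial device is as follows. Recall from Remark \ref{ud} that each $x\in X$ has a unique decomposition at every level $k$ as a concatenation of the blocks $v_k=\pi(\rho_{k-1}(0))$ and $u_k=\pi(\rho_{k-1}(1))$. Call an aligned pair $(A,B)$ of equal-length words (each a concatenation of $v_k$'s and $u_k$'s appearing in $X$) \emph{balanced} if the numbers of $v_k$'s in $A$ and $B$ agree, and \emph{irreducible} if no proper prefix is balanced. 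The rigidity of the substitution shape $\tau_{m,n}\colon 0\mapsto 0^{m-1}1,\ 1\mapsto 0^{n-1}1$ — in particular, the fact that both images end in $1$ and differ only in a prefix of $0$'s — forces the rewriting of a level-$k$ irreducible balanced pair at level $k{+}1$ to factor into a concatenation of shorter irreducible balanced pairs drawn from a very restricted finite list, which I would enumerate using the explicit forms $p_{k+1}=v_k^{m_k-1}p_k$ and $s_{k+1}=s_kv_{k+1}$ from Remark \ref{ps}. Iterating this rewriting is the balanced pair algorithm in our setting.

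The accounting is governed by the $2\times 2$ incidence matrices $M_k=\left(\begin{smallmatrix}m_k-1 & 1\\ n_k-1 & 1\end{smallmatrix}\right)$ associated with $\tau_{m_k,n_k}$. Their dominant eigenvalues drive the block lengths, while their subdominant eigenvalues $\lambda'_k$ (of modulus strictly less than $1$ in the Pisot regime $m_k < n_k \leq 2m_k$) measure how quickly unresolved nontrivial balanced pairs shrink relative to the surrounding blocks. Using the constraints of Proposition \ref{decomp} — $n_k\leq 2m_k$ for $m_k>1$, $n_k<1.9 m_k$ once $m_k>4$, and the compensating $n_{k-1}=m_{k-1}+1$ whenever $(m_k,n_k)=(1,3)$ — products of subdominant eigenvalues along the sequence of levels tend to zero, so the density of indices at which balanced pairs have not yet reduced to letter-coincidences tends to zero. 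This supplies the asymptotic agreement $\sigma^{N_k}y\approx x$ in measure, and standard arguments (essentially that the only nontrivial self-joining of $(X,\mu)$ over its Kronecker factor is the diagonal) then deliver discrete spectrum.

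The main obstacle is twofold. First, the non-Pisot case $(m_k,n_k)=(1,3)$, in which $M_k$ has a subdominant eigenvalue of modulus exactly $1$: here I would bundle $\tau_{m_{k-1},n_{k-1}}\circ\tau_{1,3}$ as a single step, since Proposition \ref{decomp} forces $n_{k-1}=m_{k-1}+1$, and the introduction's computation shows the composed matrix $\left(\begin{smallmatrix}m & 1\\ 3m-2 & 3\end{smallmatrix}\right)$ is always Pisot, restoring uniform contraction after bundling. Second, the non-stationarity of the substitutions means classical Pisot-substitution theorems do not apply off-the-shelf; I expect the cleanest way to close the argument is to verify quantitatively that the total "subdominant displacement" accumulated along the sequence is summable, again using the gaps $n_k/m_k \leq 1.9$ for $m_k>4$ to bound $|\lambda'_k/\lambda_k|$ away from $1$ uniformly outside a finite exceptional set handled by the bundling above.
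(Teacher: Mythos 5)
Your outline is in the right spirit---the paper's proof is indeed organized around balanced-pair-type comparisons of the level-$k$ blocks---but two of its load-bearing steps have genuine gaps. The first is the quantitative heart. You propose to control the density of unresolved positions by products of the subdominant eigenvalues $\lambda'_k$ of the individual incidence matrices, bounded away from $1$ ``outside a finite exceptional set.'' Neither half of this works as stated in the non-stationary setting. The exceptional set need not be finite: steps with $(m_k,n_k)=(1,3)$, or with $m_k$ small, can recur infinitely often. More importantly, since the matrices do not commute, the block length $d_k=|(\pi\circ\rho_k)(0)|$ is not comparable to $\prod_i\lambda_i$ by a per-matrix computation; the quantity that must decay is $|\pi(0)|\prod_i(n_i-m_i)/d_k$, where $d_k$ obeys the two-term recursion $d_{k+1}=m_{k+1}d_k+(n_k-m_k)d_{k-1}$. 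Writing $\beta_j=(n_j-m_j)d_{j-1}/d_j$, this quantity equals $\prod_j\beta_j$ with $\beta_{j+1}=a_{j+2}/(b_{j+1}+\beta_j)$, and an individual $\beta_j$ can be arbitrarily close to $1$ (or exceed it) even when every matrix is Pisot---for instance when $n_j=2m_j$ with $m_j$ small and $\beta_{j-1}$ small. Proposition~\ref{error} proves exponential decay of the product only via a case analysis that tracks interactions across two or three consecutive levels (bounding products of consecutive $\beta_j$'s by $48/49$ or $0.52$); a uniform per-level eigenvalue bound is not a substitute for this.

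The second gap is the combinatorial reduction and the endgame. The assertion that rewriting an irreducible balanced pair at level $k+1$ factors into pairs ``from a very restricted finite list'' is precisely the content that must be proven; in the paper it takes the concrete form of an induction showing that $(\pi\circ\rho_k)(0)^p(\pi\circ\rho_k)(1)$ and $(\pi\circ\rho_k)(1)(\pi\circ\rho_k)(0)^p$ differ in at most $2|\pi(1)|\,p\prod_i(n_i-m_i)$ positions, with the case $(m_k,n_k)=(1,3)$ requiring a separate argument exploiting $n_{k-1}=m_{k-1}+1$. Likewise, the passage from ``typical pairs approximately agree after suitable shifts'' to discrete spectrum is not standard as you state it: to run a joining argument over the Kronecker factor you must first exhibit that factor, i.e.\ show the shifts $N_k$ are genuinely organized by a group rotation. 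The paper sidesteps this by verifying mean almost periodicity directly---comparing a single point $y$ with $\sigma^s y$ for $s$ in an explicit syndetic set built from the $d_i$, bounding the upper density of disagreements by the exponentially decaying quantity above, and invoking the fact that mean almost periodicity implies discrete spectrum. You would need either to adopt that criterion or to construct the limiting rotation explicitly (the paper does so only afterwards, in Remark~\ref{alpha}, via a generalized continued fraction) before your concluding step can be carried out.
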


Our proof relies on first proving exponential decay of some quantities, which will later be used to verify discrete spectrum via so-called mean almost periodicity.

%\begin{theorem}\label{eigen}
%If $X$ is an infinite transitive subshift with $\limsup \frac{p(q)}{q} < \frac{4}{3}$, then $(X, \sigma)$ is uniquely ergodic with unique measure which is not weak mixing.
%\end{theorem}

%\begin{definition}
%For any $x \in \mathbb{R}$, define $\langle x \rangle$ to be the distance from $x$ to the nearest integer.
%\end{definition}

%\dc{NOTE: you mean $\langle \alpha |w_k| \rangle$ in the next sentence but don't define the notation until after it}

%Our general technique in the following Proposition is similar to that of Host \cite{MR873430}.%, in that we demonstrate the existence of a continuous eigenfunction with eigenvalue $\alpha$ by showing that $\langle \alpha |w_k| \rangle$ converges quickly to $0$ for a distinguished sequence of words $w_k$.

\begin{proposition}\label{error}
Let $X$ be the orbit closure of $x^{(m_{k}),(n_{k})}$ where $(m_k), (n_k)$ satisfy the conclusions of Proposition~\ref{decomp}.
Then there exist $\epsilon_k$ which converge to $0$ exponentially so that for every $k$, 
\[
\frac{(n_{k+1}+1)\len{\pi(0)}\prod_{i=1}^k (n_i - m_i)}{\len{(\pi \circ \rho_{k+1})(0)}}
< \epsilon_k.
\]
%\dc{where $a_{k+1} = n_{k} - m_{k}$.}
\end{proposition}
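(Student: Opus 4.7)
The plan is to rewrite the given ratio as a telescoping product and then estimate it. Let $a_k := |v_k|$ and $b_k := |u_k|$. From $v_{k+1} = v_k^{m_k-1}u_k$ and $u_{k+1} = v_k^{n_k-1}u_k$ one gets $a_{k+1} = (m_k-1)a_k + b_k$ and $b_{k+1}-a_{k+1} = (n_k-m_k)a_k$; eliminating $b$ yields
\[
a_{k+1} \;=\; m_k a_k + (n_{k-1}-m_{k-1})a_{k-1} \qquad (k\ge 2).
\]
Writing $\lambda_k := n_k-m_k$ and $\beta_k := \lambda_k a_k/a_{k+1}$, dividing the recurrence by $a_{k+1}$ gives the continued-fraction-style recursion
\[
\beta_k \;=\; \frac{\lambda_k}{m_k+\beta_{k-1}},
\]
and telescoping the definition gives $\prod_{i=1}^k \beta_i = a_1\prod_{i=1}^k\lambda_i/a_{k+1}$. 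Since $a_{k+2}/a_{k+1} = m_{k+1}+\beta_k \ge m_{k+1}$, the quantity in the statement equals $\frac{n_{k+1}+1}{m_{k+1}+\beta_k}\prod_{i=1}^k\beta_i$, whose prefactor is bounded by $(n_{k+1}+1)/m_{k+1} \le 4$ using the bullets of Proposition~\ref{decomp}. So it suffices to prove $\prod_{i=1}^k\beta_i$ decays exponentially.

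For this I will show: there exist $K$ and $c<1$ with $\prod_{i=j+1}^{j+K}\beta_i \le c$ for all $j$. A direct computation gives
\[
\beta_{i-1}\beta_i \;=\; \frac{\lambda_i\lambda_{i-1}}{m_im_{i-1}+m_i\beta_{i-2}+\lambda_{i-1}},
\]
and using the constraints $\lambda_i\le m_i$ for $m_i>1$, $\lambda_i<0.9m_i$ for $m_i>4$, and $\lambda_i\le 2$ for $m_i=1$ from Proposition~\ref{decomp}, a finite case analysis on $(m_i,m_{i-1})$ shows $\beta_{i-1}\beta_i \le c_1 < 1$ uniformly except in the configurations $(m_{i-1},n_{i-1})=(1,2),(m_i,n_i)=(1,3)$ and $(m_{i-1},n_{i-1})=(1,3),(m_i,n_i)=(2,4)$, where $\beta_{i-1}\beta_i = 2/(2+\beta_{i-2})$ can approach $1$. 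In these cases, if $\beta_{i-2}\le 1$ (which holds whenever $\lambda_{i-2}=1$, and in particular in the second configuration by the fifth bullet of Proposition~\ref{decomp}), then the three-term product $\beta_{i-2}\beta_{i-1}\beta_i = 2\beta_{i-2}/(2+\beta_{i-2}) \le 2/3$; in the remaining sub-case $(m_{i-2},n_{i-2})=(1,3)$, the fifth bullet forces $\lambda_{i-3}=1$ and unwinding the recursion gives
\[
\beta_{i-3}\beta_{i-2}\beta_{i-1}\beta_i \;=\; \frac{2}{2(m_{i-3}+\beta_{i-4})+1} \;\le\; \frac{2}{3}.
\]
Hence every window of four consecutive $\beta$'s multiplies to at most some $c<1$, giving $\prod_{i=1}^k\beta_i \le C'c^{k/4}$; setting $\epsilon_k := 4C'c^{k/4}$ completes the proof.

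The main obstacle is the finite case analysis establishing the uniform bound $<1$ on two-, three-, or four-step products across all configurations allowed by Proposition~\ref{decomp}. The crucial structural input is the fifth bullet: the constraint that the non-Pisot substitution $\tau_{1,3}$ is always preceded by one with $\lambda=1$ is precisely what prevents arbitrarily long chains of $\beta>1$ values and makes the constants uniform; without it, consecutive $(1,3)$'s would let the $\beta$'s grow without bound.
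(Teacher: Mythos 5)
Your setup is the same as the paper's: the length recursion $d_{k+1}=m_{k+1}d_k+(n_k-m_k)d_{k-1}$, the quantities $\beta_j=(n_j-m_j)d_{j-1}/d_j$ satisfying $\beta_j=\lambda_j/(m_j+\beta_{j-1})$, the telescoping identity, and the prefactor bound $(n_{k+1}+1)/m_{k+1}\le 4$ are all correct and reduce the problem, exactly as in the paper, to exponential decay of $\prod_{j}\beta_j$. The gap is in the case analysis, which you name as the main obstacle but only assert, and the assertion is false. Take $(m_{i-1},n_{i-1})=(1,3)$ followed by $(m_i,n_i)=(3,6)$, preceded by, say, $\tau_{3,4}$; this is consistent with every bullet of Proposition~\ref{decomp}. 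Your own formula gives $\beta_{i-1}\beta_i=\frac{\lambda_i\lambda_{i-1}}{m_im_{i-1}+m_i\beta_{i-2}+\lambda_{i-1}}=\frac{6}{5+3\beta_{i-2}}$, and since the fifth bullet forces $\lambda_{i-2}=1$ we have $\beta_{i-2}=1/(m_{i-2}+\beta_{i-3})<1/3$, so $\beta_{i-1}\beta_i>1$. More generally, whenever a $(1,3)$ step is \emph{followed} by $(m_i,n_i)$ with $n_i-m_i$ close to $m_i$, $m_i\ge 3$, and $\beta_{i-2}$ small, the two-term product exceeds $1$; so the exceptional set is not just your two configurations, and your claimed tiling into windows of length at most four with uniformly small products is not established in these cases.

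The repair is essentially the paper's grouping: anchor each problematic index $j$ (those with $(m_j,n_j)=(1,3)$, the only indices where $\beta_j$ can reach or exceed $1$) to its \emph{predecessor} rather than its successor. The identity $\beta_j\beta_{j-1}=\frac{2\beta_{j-1}}{1+\beta_{j-1}}=1+\frac{\beta_{j-1}-1}{\beta_{j-1}+1}$, combined with the fifth bullet (which gives $\lambda_{j-1}=1$ and hence $\beta_{j-1}<1$), makes this pair automatically less than $1$, and bounded away from $1$ unless $\beta_{j-1}$ is near $1$, in which case the paper shows $\beta_{j-1}\beta_{j-2}<0.5$ and uses a triple. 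You would also need a uniform bound strictly below $1$ for the non-problematic indices (the paper's $\beta_j<0.9$ when $m_j>4$, and $\beta_j<0.96$ or $\beta_j\beta_{j-1}<0.5$ when $m_j\le 4$), which your write-up does not supply. Your three- and four-term computations in the two configurations you did identify are correct, but they do not cover the missing cases.
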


\begin{proof}
We first set some preliminary notation. Define $a_{1} = 1$ and $a_{k} = n_{k-1} - m_{k-1}$ and $b_{k} = m_{k}$ for $k > 0$. Note that by Proposition~\ref{decomp}, all $b_k$ and $a_k$ are positive; $a_{k+1} \leq b_k$ whenever $b_k > 1$; $a_{k+1} < 0.9b_{k}$ whenever $b_{k} > 4$; and $a_{k+1} \leq 2$ whenever $b_{k} = 1$. We also define $d_{k} = \len{(\pi \circ \rho_k)(0)}$, and note that $(d_k)$ satisfies the recursion
\begin{equation}\label{rec}
%c_{k+1} = b_{k+1} c_k + a_{k+1} c_{k-1} \textrm{ and } 
d_{k+1} = b_{k+1} d_k + a_{k+1} d_{k-1}
\end{equation}
where $d_{-1} = \len{\pi(1)} - \len{\pi(0)}$ and $d_{0} = \len{\pi(0)}$.

%Define
%$d_{-1} = \len{\pi(1)} - \len{\pi(0)}$, $d_{0} = \len{\pi(0)}$, $c_{-1} = 1$ and $c_0 = 0$. Then, for all $k \geq 0$, define
%\begin{equation}\label{rec}
%c_{k+1} = b_{k+1} c_k + a_{k+1} c_{k-1} \textrm{ and } 
%d_{k+1} = b_{k+1} d_k + a_{k+1} d_{k-1}.
%\end{equation}

%We continue to use the notation
%\[
%\rho_k = \tau_{m_1, n_1} \circ \cdots \circ \tau_{m_k, n_k}.
%\]

%\begin{claim}
%$d_{k} = \len{(\pi \circ \rho_k)(0)}$ for $k \geq 0$.
%\end{claim}
%\begin{proof}
%Clearly $d_{0} = \len{\pi(0)}$ and $d_{1} = m_{1}d_{0} + a_{1}d_{-1} = m_{1}\len{\pi(0)} + \len{\pi(1)} - \len{\pi(0)} = \len{\pi(0^{m_1 - 1}1)} = \len{(\pi \circ \rho_1)(0)}$.
%
%If $d_k$ is the length of $(\pi \circ \rho_k)(0)$ and $d_{k-1}$ is the length of $(\pi\circ \rho_{k-1})(0)$, then the length of $(\pi\circ \rho_{k+1})(0)$ is the length of
%$(\pi\circ \rho_k(0))^{m_{k+1} - 1} (\pi\circ \rho_k(1))$. Note that the length of $(\pi\circ \rho_k)(1)$ is the length of $(\pi\circ \rho_k)(0)$ plus $(n_k - m_k)$ times the length of $(\pi\circ \rho_{k-1})(0)$.
%So, the length of $(\pi\circ \rho_{k+1})(0)$ is
%\begin{align*}
%&(m_{k+1} - 1) |(\pi\circ \rho_k)(0)| + |(\pi\circ \rho_k)(0)| + (n_k - m_k) |(\pi \circ \rho_{k-1})(0)| \\
%&\quad\quad = m_{k+1} d_k + (n_k - m_k) d_{k-1} = b_{k+1} d_k + a_{k+1} d_{k-1} = d_{k+1}.
%\end{align*}
%By induction, then $d_k = |(\pi \circ \rho_k)(0)|$ for all $k > 0$.
%\end{proof}

For ease of notation, define 
\[
\beta_j = \frac{a_{j+1} d_{j-1}}{d_j} 
\]
and observe that, by (\ref{rec}),
\[
\beta_{j+1} = \frac{a_{j+2}d_{j}}{d_{j+1}} = \frac{a_{j+2}}{b_{j+1} + a_{j+1}\frac{d_{j-1}}{d_{j}}} = \frac{a_{j+2}}{b_{j+1} + \beta_{j}}. 
\]

Note that $\beta_{0} = \frac{a_{1}d_{-1}}{d_{0}} = \frac{d_{-1}}{\len{\pi(0)}}$.  Then
\begin{equation}\label{betaprod}
\frac{\len{\pi(0)}a_{1}\cdots a_{k+1}}{d_{k}} = \frac{\len{\pi(0)}}{d_{-1}}\prod_{j=0}^{k} \frac{a_{j+1}d_{j-1}}{d_{j}}
= \frac{\len{\pi(0)}}{d_{-1}}\beta_{0} \prod_{j=1}^{k} \beta_{j} = \prod_{j=1}^{k} \beta_{j}.
\end{equation}

\begin{claim}
$0 < \beta_{j} < 2$ for all $j \geq 0$.
\end{claim}
\begin{proof}
Since $a_{j+1} \leq b_{j} + 1$ for all $j$, $\beta_{j} \leq \frac{b_{j}+1}{b_{j}+\beta} < 1 + \frac{1}{b_{j}} \leq 2$.
\end{proof}

\begin{claim}
If $a_{j+1} \leq b_{j}$ then $\beta_{j} < 1$.
\end{claim}
\begin{proof}
Since $\beta_{j-1} > 0$,
$\beta_{j} = \frac{a_{j+1}}{b_{j} + \beta_{j-1}} < \frac{a_{j+1}}{b_{j}} \leq 1$.
\end{proof}

\begin{claim}
If $a_{j+1} = b_{j} + 1$ then at least one of $\beta_{j} < 1$ or $\beta_{j}\beta_{j-1} \leq 1$.
\end{claim}
\begin{proof}
When $a_{j+1} = 2$ and $b_{j} = 1$, by Proposition \ref{decomp}, $\tau_{1,3}$ cannot occur for consecutive values so we have $a_{j} \leq b_{j}$ so $\beta_{j-1} \leq 1$.  Since $\beta_{j} = \frac{2}{1 + \beta_{j-1}} \geq 1$, we have $\beta_{j}\beta_{j-1} = 2 - \beta_{j} \leq 1$.  
\end{proof}

This implies $\prod_{j=1}^{k} \beta_{j} \leq \beta_{1} \leq 2$.

%It is easily proved by induction that $c_{k+1}d_{k} - c_{k}d_{k+1} = (-1)^{k}|\pi(0)| a_{1} \cdots a_{k}$ for all $k$.  
%Therefore, as either $\beta_{1} \leq 1$ or $a_{2} = 2$ in which case $\beta_{1} = \frac{2\len{\pi(0)}}{\len{\pi(1)}} \leq 2$, by (\ref{betaprod}),
%\begin{equation}\label{diff}
%\left| \frac{c_{k+1}}{d_{k+1}} - \frac{c_k}{d_k} \right| = \frac{|\pi(0)| a_1 \ldots a_{k+1}}{d_{k+1} d_k} 
%= \frac{1}{d_{k+1}} \prod_{j=1}^{k} \beta_{j} \leq \frac{2}{d_{k+1}} 
%\end{equation}
%for all $k \geq 0$. Since the sequence $d_k$ grows exponentially, this implies that the sequence $\frac{c_k}{d_k}$ approaches a limit, which we denote by $\alpha$. By (\ref{rec}), $\frac{c_{k+1}}{d_{k+1}}$ is between $\frac{c_k}{d_k}$ and $\frac{c_{k-1}}{d_{k-1}}$ for all $k > 0$, and $\frac{c_0}{d_0} = 0 < \frac{c_1}{d_1}$. Therefore, $\frac{c_{2k}}{d_{2k}}$ approaches $\alpha$ from below and $\frac{c_{2k+1}}{d_{2k+1}}$ approaches $\alpha$ from above. Therefore,
%
%\begin{equation}\label{diff2}
%\left| \alpha - \frac{c_k}{d_k} \right| < \left| \frac{c_{k+1}}{d_{k+1}} - \frac{c_{k}}{d_{k}} \right| =
% \frac{|\pi(0)| a_1 \ldots a_{k+1}}{d_{k+1} d_k}.
%\end{equation}
%
%Therefore for any $p$, by (\ref{betaprod}),
%\begin{equation}\label{bd0}
%\langle |(\pi \circ \rho_k)(0)| p \alpha \rangle < \frac{p |\pi(0)| a_1 \ldots a_{k+1}}{d_{k+1}}
%% = \frac{p d_k}{d_{k+1}} \prod_{j=0}^k \frac{a_{j+1} d_{j-1}}{d_j}
%= \frac{p d_{k}}{d_{k+1}} \prod_{j=1}^{k} \beta_{j}.
%\end{equation}

By the assumptions on $(m_k)$ and $(n_k)$, we see that $a_{k+1} \leq b_k$ when $b_k > 1$ and $a_{k+1} \leq 2$ when $b_{k}=1$ and 
$a_{k+1} < 0.9 b_k$ when $b_k > 4$. We now break into several cases.%\rp{For future usage, we set $N = 4$ and $\epsilon' = 0.1$, and then $b_k > N$ implies $a_{k+1} < (1 - \epsilon')b_k$ (NOTE: NOW THAT THE 0.9 ISSUE IS EASIER, DO WE EVEN NEED THIS $N$/$\epsilon'$ NOTATION ANYMORE?)}.

\noindent
\textbf{Case 1:} If $b_{j} > 4$ then $\beta_{j} < 0.9$.

\begin{proof}
If $b_{j} > 4$ then, as $d_{j} > b_{j}d_{j-1}$ by (\ref{rec}), $\beta_{j} = \frac{a_{j+1}d_{j-1}}{d_{j}} < 0.9$.
\end{proof}

\noindent
\textbf{Case 2:}
If $a_{j+1} \leq b_{j} \leq 4$ and $b_{j-1} \leq 4$ then $\beta_{j} < 0.96$.

\begin{proof}
If $a_{j+1} \leq b_{j} \leq 4$ and $b_{j-1} \leq 4$ then by (\ref{rec}),
\begin{equation*}
d_{j} = b_{j} d_{j-1} + a_{j} d_{j-2} \leq b_{j} d_{j-1} + (b_{j-1}+1) d_{j-2} <
b_{j} d_{j-1} + d_{j-1} + d_{j-2} \leq (b_{j} + 2)d_{j-1} \leq 6d_{j-1}.
\end{equation*}

Then, again by (\ref{rec}), using that $a_{j+1} \leq b_{j}$,
\[
\frac{d_{j}}{d_{j-1}} = b_{j} + \frac{a_{j} d_{j-2}}{d_{j-1}} > b_{j} + \frac{1}{6} \geq a_{j+1} + \frac{1}{6}.
\]

Therefore, since $a_{j+1} \leq b_{j} \leq 4$,
\[
\beta_{j} = \frac{a_{j+1} d_{j-1}}{d_{j}} 
< \frac{a_{j+1}}{a_{j+1} + (1/6)} =
\left(1 + \frac{1}{6a_{j+1}}\right)^{-1} <\left(1 + \frac{1}{24}\right)^{-1} = 0.96. \qedhere
\]
\end{proof}

%So define $\epsilon'' = \min(\epsilon', 1 - (1 + (N(N+1))^{-1})^{-1})$.  
%Then we have $\beta_{j} < 0.96$ whenever $b_{j}, b_{j-1} \leq 4$ and $a_{j+1} \leq b_{j}$ and also whenever $b_{j} > 4$.

\noindent
\textbf{Case 3:}
If $a_{j+1} \leq b_{j} \leq 4$ and $b_{j-1} > 4$ then at least one of $\beta_{j} < 0.96$ or $\beta_{j}\beta_{j-1} < 0.5$ holds.

\begin{proof}
Consider when $a_{j+1} \leq b_{j} \leq 4$ and $b_{j-1} > 4$ so $\beta_{j-1} < 0.96$.  Suppose $\beta_{j} > \frac{8}{9}$.  Then
\[
\frac{8}{9} < \frac{a_{j+1}}{b_{j} + \beta_{j-1}} \leq \frac{b_{j}}{b_{j} + \beta_{j-1}} \leq \frac{4}{4 + \beta_{j-1}}
\]
so $8 + 2\beta_{j-1} < 9$ so $\beta_{j-1} < \frac{1}{2}$.  Then $\beta_{j}\beta_{j-1} < \beta_{j-1} < 0.5$ since $a_{j+1} \leq b_{j}$ implies $\beta_{j} < 1$.  So at least one of $\beta_{j} \leq \frac{8}{9} < 0.96$ or $\beta_{j}\beta_{j-1} < 0.5$ must hold.
\end{proof}

Any $j$ where $a_{j+1} \leq b_j$ is covered by Case 1 if $b_j > 4$ and Case 2 or 3 if $b_j \leq 4$. The only remaining case is then 
$a_{j+1} > b_j$, which happens only if $a_{j+1} = 2$ and $b_j = 1$.

\noindent
\textbf{Case 4:}
If $a_{j+1} = 2$ and $b_{j} = 1$ then at least one of $\beta_{j}\beta_{j-1} < \frac{48}{49}$ or $\beta_{j}\beta_{j-1}\beta_{j-2} < 0.52$ holds.

\begin{proof}
Consider any such $j$. By Proposition \ref{decomp}, $\tau_{1,3}$ cannot occur consecutively so $a_j \leq b_{j-1}$, and so $j-1$ is in one of Cases 1-3. %when $a_{j+1} = 2$ and $b_{j} = 1$ and 
If $\beta_{j-1} < 0.96$, then
\[
\beta_{j}\beta_{j-1} = \frac{a_{j+1}}{b_{j} + \beta_{j-1}}\beta_{j-1} = \frac{2\beta_{j-1}}{1 + \beta_{j}-1} = 1 + \frac{\beta_{j-1} - 1}{\beta_{j-1} + 1} < 1 + \frac{0.96 - 1}{0.96 + 1} = \frac{48}{49}.
\]

If $\beta_{j-1} \geq 0.96$, then $j-1$ must be in Case 3 and $\beta_{j-1}\beta_{j-2} < 0.5$.  Then
\[
\beta_{j}\beta_{j-1}\beta_{j-2} = \frac{2}{1 + \beta_{j-1}}\beta_{j-1}\beta_{j-2} < \frac{1}{1 + \beta_{j-1}} \leq \frac{1}{1 + 0.96} < 0.52. \qedhere
\]
\end{proof}

\begin{claim}
For all $k \geq 1$,
\[
\prod_{j=1}^{k} \beta_{j} < 2 \Big{(}\frac{48}{49}\Big{)}^{k/2}.
\]
\end{claim}
\begin{proof}
All $j>2$ are in one of the cases above, and so at least one of the following hold:
%Since $a_{j+1} = 2$, $b_{j} = 1$ cannot occur consecutively and $a_{j+1} \leq b_{j+1} \leq 4$, $b_{j-1} > 4$ likewise cannot occur consecutively, we have that for all $j > 1$ at least one of the following hold:
$\beta_{j} < 0.96$, 
$\beta_{j}\beta_{j-1} < \frac{48}{49}$, or
$\beta_{j}\beta_{j-1}\beta_{j-2} < 0.52$. For every $k$, we can group the product $\prod_{j=1}^{k} \beta_{j}$ into products of one, two, or three consecutive terms bounded from above in this way, with the possible exception of $\beta_1$ or $\beta_1 \beta_2$. As $0.96 < \sqrt{\frac{48}{49}}$ and $0.52 < (\frac{48}{49})^{3/2}$, and since $\beta_1 \beta_2 < 1$ whenever $\beta_1 > 1$, this yields 
\[
\prod_{j=1}^{k} \beta_{j} < \beta_{1} \Big{(}\frac{48}{49}\Big{)}^{k/2} < 2 \Big{(}\frac{48}{49}\Big{)}^{k/2}. \qedhere
\]
\end{proof}

%
%If $\beta_{j} < 0.96$ and $a_{j+2} = 2$ and $b_{j+1} = 1$ then
%\begin{equation}\label{zz}
%\beta_{j+1}\beta_{j} = \frac{a_{j+2}}{b_{j+1} + \beta_{j}}\beta_{j} = \frac{2\beta_{j}}{1 + \beta_{j}} = 1 + \frac{\beta_{j} - 1}{\beta_{j} + 1} < 1 + \frac{0.96 - 1}{0.96 + 1} = \frac{48}{49}.
%\end{equation}
%
%
%
%
%for $j = j_{0}+1, \ldots, j_{0} + L$ and $a_{j_{0}+1} \leq b_{j_{0}}$.  
%
%When $\beta_{j_{0}} < 0.96$, we have $\beta_{j_{0}+1} < 1$ by (\ref{zz}) so $\prod_{j=j_{0}}^{j_{0}+L} \beta_{j} \leq \beta_{j_{0}}\max(1,\beta_{j_{0}+1}) < 0.96$.
%
%When $\beta_{j_{0}} \geq 0.96$, we must have $b_{j_{0}} \leq 4$ and $b_{j_{0}-1} > 4$ so 
%$\beta_{j_{0}-1} < 0.96$.  By (\ref{zz}), then $\beta_{j_{0}-1}\beta_{j_{0}} < \frac{48}{49}$.  Then
%$\prod_{j=j_{0}-1}^{L} \beta_{j} \leq \beta_{j_{0}-1}\max(1,\beta_{j_{0}}) < \frac{48}{49}$.
%
%Recall that there exists $T$ so that $a_{j+1} > b_j$ cannot occur for more than $T$ consecutive values of $j$ for sufficiently large $j$. We can then split the sufficiently large integers, say $j > k$, into blocks of 
%consecutive integers as above (a block consists of $j$ where $a_{j+1} \leq b_j$, followed by $L$ integers, $0 \leq L \leq T$, where 
%$a_{j+1} = 2$ and $b_j = 1$) with length at most $T+1$, where the product of the $\beta_{j}$ over each block is less than 
%$\frac{48}{49}$. 

%Then, by (\ref{bd0}),
%\begin{equation}\label{extra1}
%\langle |(\pi \circ \rho_k)(0)| p \alpha \rangle
%< \frac{p\len{\pi(0)}a_{1} \cdots a_{k+1}}{d_{k+1}}
%= \frac{pd_k}{d_{k+1}} \prod_{j=1}^k \beta_j 
%< 2 \frac{pd_k}{d_{k+1}} \Big{(}\frac{48}{49}\Big{)}^{k/2}
%\end{equation}

Since $n_{k+1} \leq 2m_{k+1} + 1 = 2b_{k+1} + 1$, we have $\frac{(n_{k+1}+1)d_{k}}{d_{k+1}} \leq \frac{(2b_{k+1}+2)d_{k}}{b_{k+1}d_{k}} = 2 + \frac{2}{b_{k+1}} \leq 4$, and so
%so the above is less than $\epsilon_{k} := 6(\frac{48}{49})^{\lfloor k/(T+1) \rfloor}$, completing the proof.
\[
\frac{n_{k+1}\len{\pi(0)}\prod_{i=1}^k (n_i - m_i)}{d_{k+1}}
= \frac{n_{k+1}d_{k}}{d_{k+1}}\frac{\len{\pi(0)}a_{1} \cdots a_{k+1}}{d_{k}}
\leq 4 \prod_{j=1}^{k} \beta_{j} < 8\left(\frac{48}{49}\right)^{k/2}.
\]
Defining $\epsilon_{k} := 8(\frac{48}{49})^{k/2}$ completes the proof.
\end{proof}

\begin{proof}[Proof of Theorem \ref{disc}]
Our technique for verifying discrete spectrum of $X$ is by using mean almost periodicity, which requires a definition. The \textbf{upper density} of $A \subset \mathbb{N}$, denoted $\overline{d}(A)$, is
$\limsup \frac{|A \cap \{1, \ldots, n\}|}{n}$. It's easy to check that upper density is subadditive, i.e.
$\dens(A \cup B) \leq \dens(A) + \dens(B)$ for every $A, B$.

A subshift $X$ is \textbf{mean almost periodic} if for all $\epsilon > 0$ and all $x \in X$, there exists a syndetic set $S$ so that for all $s \in S$, $x$ and $\sigma^s x$ differ on a set of locations with upper density less than $\epsilon$. It is well-known that mean almost periodicity implies discrete spectrum; see for instance Theorem 2.8 of \cite{MR2569181}.

Examples of aperiodic but mean almost periodic subshifts are given by the Sturmian subshifts and also so-called regular Toeplitz subshifts. Since our hypotheses are satisfied by Sturmian subshifts, their mean almost periodicity follows as a corollary of our proof.%Sturmian and Toeplitz subshifts are known to be mean almost periodic (and not periodic); since our class of subshifts contains the Sturmian subshifts, the known result on Sturmians follows as a corollary of our proof.

%First, we note that by \cite{ormespavlov}, $X$ is minimal, and so by \cite{boshernitzan}, $X$ is uniquely ergodic; any references to `measure' refer to the unique $\sigma$-invariant Borel probability measure on $X$. 

By Proposition~\ref{decomp}, $X$ is the orbit closure of 
\[
x^{(m_k), (n_k)} = \lim_{k \rightarrow \infty} (\pi \circ \tau_{m_1, n_1} \circ \tau_{m_2, n_2} \circ \cdots \tau_{m_k, n_k})(0)
= \lim_{k \rightarrow \infty} (\pi \circ \rho_{k})(0)
\]
for some $\pi: \{0,1\} \rightarrow \mathcal{A}^*$ where $\pi(0), \pi(1)$ begin with different letters and 
$|\pi(0)| < |\pi(1)| < 2|\pi(0)|$ and some sequences $(m_k), (n_k)$ satisfying $0 < m_k < n_k \leq 2m_k$ or $(m_{k},n_{k}) = (1,3)$.

We again use the notations
$a_{k+1} = n_k - m_k$ and $d_k = |(\pi \circ \rho_k)(0)|$ as in the proof of Proposition~\ref{error}.

For any $k > 0$ and $p \in \mathbb{N}$, define the words
\begin{align*}
y_{0,k,p} = ((\pi \circ \rho_k)(0))^p (\pi \circ \rho_k)(1), \ & z_{0,k,p} = (\pi \circ \rho_k)(1) ( (\pi \circ \rho_k)(0))^p, \\
y_{1,k,p} = ((\pi \circ \rho_k)(1))^p (\pi \circ \rho_k)(0), \ & z_{1,k,p} = (\pi \circ \rho_k)(0) ( (\pi \circ \rho_k)(1))^p.\
\end{align*}

We will prove the following by induction:
\begin{equation}\label{diffbd}
\textrm{$y_{i,k,p}$, $z_{i,k,p}$ differ on fewer than $2|\pi(1)| p a_1 \ldots a_{k+1}$ locations ($i \in \{0,1\}$)}.
\end{equation}

The base case $k = 0$ trivially holds, since the lengths of $y_{0,0,p}, z_{0,0,p}, y_{1,0,p}, z_{1,0,p}$ are less than $2p|\pi(1)|$. 

Assume now that (\ref{diffbd}) holds for some $k-1$ (and all $p$). 

Consider first the case when $n_{k} \leq 2m_{k}$.

Then by definition of $\tau_{m_k, n_k}$, if we write $u = (\pi \circ \rho_{k-1})(1)$, $v = (\pi \circ \rho_{k-1})(0)$, $m = m_k$, and $n = n_k$, then $y_{0,k,p} = (v^{m-1} u)^p v^{n-1} u$ and $z_{0,k,p} = v^{n-1} u (v^{m-1} u)^p$. 

Since $v$ is a suffix of $u$, write $u = u^{\prime}v$.  Then, using that $m < n \leq 2m$,
\begin{align*}
y_{0,k,p} &= (v^{m-1}u)^{p} v^{n-1}u
= (v^{m-1}u^{\prime}v)^{p} v^{m-1}v^{n-m}u
= v^{m-1}(u^{\prime}v^{m})^{p}v^{n-m}u \\
&= v^{m-1}(u^{\prime}v^{n-m}v^{2m-n})^{p}v^{n-m}u, \\
z_{0,k,p} &= v^{n-1}u(v^{m-1}u)^{p}
= v^{m-1}v^{n-m}(u^{\prime}v^{m})^{p}u 
= v^{m-1}v^{n-m}(u^{\prime}v^{2m-n}v^{n-m})^{p}u \\
&= v^{m-1}(v^{n-m}u^{\prime}v^{2m-n})^{p}v^{n-m}u.
\end{align*}
Since $\len{u^{\prime}v^{n-m}} = \len{v^{n-m}u^{\prime}}$, this means $y_{0,k,p}$ and $z_{0,k,p}$ differ at a number of locations equal to $p$ times the number of locations where $u^{\prime}v^{n-m}$ and $v^{n-m}u^{\prime}$ differ.  Clearly $u^{\prime}v^{n-m}$ and $v^{n-m}u^{\prime}$ differ on the same number of locations as $u^{\prime}v^{n-m}v = uv^{n-m}$ and $v^{n-m}u^{\prime}v = v^{n-m}u$ differ.  Since $uv^{n-m} = z_{0,k-1,n-m}$ and $v^{n-m}u = y_{0,k-1,n-m}$, the inductive hypothesis gives that they differ on fewer than $2|\pi(1)|(n-m)a_{1}\cdots a_{k}$ locations.  Then $y_{0,k,p}$ and $z_{0,k,p}$ differ on fewer than $2|\pi(1)|p(n-m)a_{1}\cdots a_{k}$ locations.  Since $a_{k+1} = n - m$, this proves the claim.
Similarly,
\begin{align*}
y_{1,k,p} &= (v^{n-1}u)^{p}v^{m-1}u
= v^{n-1}(u^{\prime}v^{n})^{p-1}u^{\prime}v^{m}u \\
&= v^{m-1}v^{n-m}(u^{\prime}v^{m}v^{n-m})^{p-1}u^{\prime}v^{m}u
= v^{m-1}(v^{n-m}u^{\prime}v^{m})^{p}u, \\
z_{1,k,p} &= v^{m-1}u(v^{n-1}u)^{p}
= v^{m-1}(u^{\prime}v^{n})^{p}u 
= v^{m-1}(u^{\prime}v^{n-m}v^{m})^{p}u.
\end{align*}
so $y_{1,k,p}$ and $z_{1,k,p}$ differ on fewer than $2|\pi(1)|pa_{1}\cdots a_{k+1}$ locations.

Consider now the case when $n_{k} = 3$ and $m_{k} = 1$.  Here
\[
(\pi \circ \rho_{k})(0) = (\pi \circ \rho_{k-1})(1), (\pi \circ \rho_{k})(1) = ((\pi \circ \rho_{k-1})(0))^{2} (\pi \circ \rho_{k-1})(1)
\]

By Proposition \ref{decomp}, $n_{k-1} = m_{k-1} + 1$ so we have $(\pi \circ \rho_{k-1})(1) = (\pi \circ \rho_{k-2})(0) (\pi \circ \rho_{k-1})(0)$.

First consider when $m_{k-1} > 1$.  Here $(\pi \circ \rho_{k-2})(0)$ is a prefix of $(\pi \circ \rho_{k-1})(0)$ so there are words $g = (\pi \circ \rho_{k-2})(0)$ and $h$ such that $(\pi \circ \rho_{k-1})(0) = gh$ and $(\pi \circ \rho_{k-1})(1) = ggh$.  Then
$
(\pi \circ \rho_{k})(0) = ggh$ and $(\pi \circ \rho_{k})(1) = (gh)^{2}ggh$ so
\begin{align*}
y_{0,k,p} &= (ggh)^{p} (ghghggh)
= ggh (ggh)^{p-1} ghghggh \\
z_{0,k,p} &= (ghghggh) (ggh)^{p}
%= ghg(hgg)^{p+1} h
= ghg (hgg)^{p-1} hgghggh
\end{align*}
which differ on two pairs of $gh$ and $hg$ and on $p-1$ pairs of $ggh$ and $hgg$.

Our inductive hypothesis does apply directly to $gh$ and $hg$, however
 $gh$ and $hg$ differ on the same number of letters as $ggh = (\pi \circ \rho_{k-2})(0) ((\pi \circ \rho_{k-2})(0))^{m_{k-1}-1} (\pi \circ \rho_{k-2})(1)$ and $ghg = ((\pi \circ \rho_{k-2})(0))^{m_{k-1}-1} (\pi \circ \rho_{k-2})(1) (\pi \circ \rho_{k-2})(0)$.  Those words differ on the same number of letters as $(\pi \circ \rho_{k-2})(0)(\pi \circ \rho_{k-2})(1)$ and $(\pi \circ \rho_{k-2})(1)(\pi \circ \rho_{k-2})(0)$, and by hypothesis they differ on fewer than $2\len{\pi(1)}a_{1}\cdots a_{k-1}$ locations.

Similarly, $gggh = ((\pi \circ \rho_{k-2})(0))^{m_{k-1}+1}(\pi \circ \rho_{k-2})(1)$ and $ghgg = ((\pi \circ \rho_{k-2})(0))^{m_{k-1}-1}(\pi \circ \rho_{k-2})(1)((\pi \circ \rho_{k-2})(0))^{2}$ differ on the same number of letters as $(\pi \circ \rho_{k-2})(1)((\pi \circ \rho_{k-2})(0))^{2}$ and $((\pi \circ \rho_{k-2})(0))^{2}(\pi \circ \rho_{k-2})(1)$ which by hypothesis is fewer than $2\len{\pi(1)}2a_{1}\cdots a_{k-1}$ locations.

Therefore $y_{0,k,p}$ and $z_{0,k,p}$ differ on fewer than $2 \cdot 2\len{\pi(1)}a_{1}\cdots a_{k-1} + 2(p-1)2\len{\pi(1)}a_{1}\cdots a_{k-1}$ locations.  Since $a_{k} = 1$ and $a_{k+1} = 2$, they differ on fewer than $2\len{\pi(1)}p a_{1} \cdots a_{k+1}$ locations.
Similarly,
\begin{align*}
y_{1,k,p} &= (ghghggh)^{p}ggh = ghg(hgghghg)^{p-1} hgghggh \\
z_{1,k,p} &= ggh(ghghggh)^{p} = ggh (ghghggh)^{p-1} ghghggh
\end{align*}
differ on two pairs of $gh$ and $hg$ and on $p-1$ pairs of $hgghghg$ and $ghghggh$.  As $hgghghg$ and $ghghggh$ differ on two pairs of $gh$ and $hg$, the total number of differences is $2p$ times the number of differences between $gh$ and $hg$.  Since $gh$ and $hg$ differ on fewer than $2\len{\pi(1)} a_{1} \cdots a_{k-1}$ locations, and since $a_{k} = 1$ and $a_{k+1} = 2$, $y_{1,k,p}$ and $z_{1,k,p}$ differ on fewer than $2\len{\pi(1)}p a_{1} \cdots a_{k+1}$ locations.

Now consider when $m_{k-1} = 1$.  Here $(\pi \circ \rho_{k-1})(0) = (\pi \circ \rho_{k-2})(1)$ so $(\pi \circ \rho_{k-2})(0)$ is a suffix of $(\pi \circ \rho_{k-1})(1)$.  So there are words $g = (\pi \circ \rho_{k-2})(0)$ and $h$ such that $(\pi \circ \rho_{k-1})(0) = hg$.  Then
$(\pi \circ \rho_{k})(0) = ghg$ and $(\pi \circ \rho_{k})(1) = (hg)^{2}ghg$ so
\begin{align*}
y_{0,k,p} &= (ghg)^{p}hghgghg = gh(ggh)^{p-1}ghghgghg \\
z_{0,k,p} &= hghgghg(ghg)^{p} = hg(hgg)^{p-1}hgghgghg
\end{align*}
which differ on two pairs of $gh$ and $hg$ and on $p-1$ pairs of $ggh$ and $hgg$.  Since $gghg = (\pi \circ \rho_{k-2})(0)^{2} (\pi \circ \rho_{k-1})(0) = (\pi \circ \rho_{k-2})(0)^{2} (\pi \circ \rho_{k-2})(1)$ and $hggg = (\pi \circ \rho_{k-2})(1) ((\pi \circ \rho_{k-2})(0))^{2}$, by hypothesis they differ on fewer than $2\len{\pi(1)}2a_{1}\cdots a_{k-1}$ locations.
Then, as above, $y_{0,k,p}$ and $z_{0,k,p}$ differ on fewer than $2\len{\pi(1)}p a_{1} \cdots a_{k+1}$ locations.  Similarly,
\begin{align*}
y_{1,k,p} &= (hghgghg)^{p}ghg = hg (hgghghg)^{p-1} hgghgghg \\
z_{1,k,p} &= ghg(hghgghg)^{p} = gh(ghghggh)^{p-1} ghghgghg
\end{align*}
differ on $2p$ pairs of $gh$ and $hg$ so $y_{1,k,p}$ and $z_{1,k,p}$ differ on fewer than $2\len{\pi(1)} p a_{1} \cdots a_{k+1}$ locations.

We will now prove that $X$ is mean almost periodic. Fix any $k$, and as before, define $u = (\pi \circ \rho_{k-1})(1)$, $v = (\pi \circ \rho_{k-1})(0)$, $m = m_k$, and $n = n_k$. Choose any $y \in X$; by minimality of $X$, $y$ can be written as a bi-infinite concatenation of the words $(\pi \circ \rho_k)(0) = v^{m-1} u$ and $(\pi \circ \rho_k)(1) = v^{n-1} u$. We may assume without loss of generality that $y$ contains $v^{m-1} u$ starting at the origin, since any syndetic set $S$ as in the definition of mean almost periodicity for $y$ also works for any shift of $y$. Since $d_{k} = |v^{m-1}u|$, let us write
\begin{align*}
y                 & = \ldots.v^{m-1} u v^{i_1 - 1} u v^{i_2 - 1} u \ldots\\
\sigma^{d_k} y & = \ldots.v^{i_1 - 1} u v^{i_2 - 1} u \ldots
\end{align*}
where each $i_k$ is either $m$ or $n$.
We can rewrite as
\begin{align*}
y                 & = \ldots.v^{m-1} (u v^{i_1 - m}) v^{m-1} (u v^{i_2 - m}) v^{m-1} \ldots\\
\sigma^{d_k} y & = \ldots.v^{m-1} (v^{i_1 - m} u) v^{m-1} (v^{i_2 - m} u) v^{m-1} \ldots
\end{align*}

The words inside parentheses are unequal exactly when $i_j = n$, in which case they are the pair $u v^{n-m}$, $v^{n-m} u$.
Since the lengths of $u v^{n-m}$ and $v^{n-m} u$ are the same, this means that the only differences in $y$ and $\sigma^{d_k} y$ occur within pairs $u v^{n-m}$, $v^{n-m} u$. By (\ref{diffbd}), the number of differences in any such pair is bounded from above by $2|\pi(1)| (n-m) a_1 \ldots a_k = 2|\pi(1)| a_1 \ldots a_{k+1}$. When $y$ is partitioned into its level-$(k+1)$ words $(\pi \circ \rho_{k+1})(0)$ and $(\pi \circ \rho_{k+1})(1)$ (and $\sigma^{d_k}$ is partitioned at the same locations), each partitioned segment contains exactly one such pair $u v^{n-m}$, $v^{n-m} u$. Since each such segment has length at least $|(\pi \circ \rho_{k+1})(0)| = d_{k+1}$, %the \rp{upper} density of differences in $y, \sigma^{d_k} y$ is bounded from above by
\[
\dens \left( \{t \ : \ y(t) \neq (\sigma^{d_k} y)(t)\} \right) \leq \frac{2|\pi(1)| a_1 \ldots a_{k+1}}{d_{k+1}}.
\]
For ease of notation, we define $D_q = \{t \ : \ y(t) \neq y(t+q)\}$ for every $q$; by the above,
\begin{equation}\label{diffdens}
\dens(D_{d_k}) \leq \frac{2|\pi(1)| a_1 \ldots a_k a_{k+1}}{d_{k+1}}.
\end{equation}
Now, fix any $k$ and consider the set
\[
S_k := \left\{\sum_{i = k}^{r} p_i d_i \ : \ r > k, 0 \leq p_i \leq n_{i+1}+1\right\}.
\]
We claim that $S_k$ is syndetic. To see this, note that $n_{i+1} d_i > d_{i+1}$ for all $i$ since $d_{i+1} = m_{i+1}d_{i} + (n_{i} - m_{i})d_{i-1} \leq m_{i+1}d_{i} + (m_{i}+1)d_{i-1} < m_{i+1}d_{i} + d_{i} + d_{i-1} \leq (m_{i+1}+2)d_{i} \leq (n_{i+1}+1)d_{i}$, and so a simple greedy algorithm shows that for all $M \in \mathbb{N}$, there exists $s \in S_k$ with $M \leq s < M + d_k$.

Finally, choose any $s = \sum_{i = k}^{r} p_i d_i \in S_k$. For any $\ell_1, \ell_2 \in \mathbb{N}$,
$D_{\ell_1 + \ell_2} \subset D_{\ell_1} \cup (D_{\ell_2} - \ell_1)$ since $t \in D_{\ell_{1}+\ell_{2}}$ implies at least one of $y(t) \ne y(t+\ell_{1})$ or $y(t+\ell_{1}) \ne y(t+\ell_{1}+\ell_{2})$ , and so $\dens(D_{\ell_1 + \ell_2}) \leq \dens(D_{\ell_1}) + \dens(D_{\ell_2})$. Using this repeatedly implies
\[
\dens(D_s) = \dens\left(D_{\sum_{i = k}^{r} p_i d_i}\right) \leq \sum_{i = k}^r p_i \dens(D_{d_i}) \leq \sum_{i = k}^r
\frac{2|\pi(1)| n_{i+1} a_1 \ldots a_{i+1}}{d_{i+1}}.
\]
Proposition \ref{error} implies that $\frac{(n_{i+1}+1) \len{\pi(0)} a_{1}\cdots a_{i+1}}{d_{i+1}} < \epsilon_i$ for a sequence $\epsilon_i$ which is exponentially decaying.  Then
$
\dens(D_s) < \sum_{i=k}^r \frac{2\len{\pi(1)}}{\len{\pi(0)}} \epsilon_i
$.
Since $(\epsilon_i)$ is summable, the right-hand side becomes arbitrarily small as $k \rightarrow \infty$, and so $X$ is mean almost periodic, and therefore has discrete spectrum.
\end{proof}

\begin{remark}\label{alpha}
We remark that in fact this proof yields an explicit formula for an eigenvalue of $X$. Namely, define a sequence
$(c_k)$ by $c_{-1} = 1$, $c_0 = 0$, and the same recursion $c_{k+1} = b_{k+1} c_k + a_{k+1} c_{k-1}$. Basic continued fraction theory implies that $\frac{c_k}{d_k}$ approaches a limit $\alpha$, and that for all $k$,
\[
\left| \frac{c_k}{d_k} - \alpha \right| < 
\left| \frac{c_k}{d_k} - \frac{c_{k+1}}{d_{k+1}} \right| = 
\frac{|\pi(0)|a_1 \ldots a_{k+1}}{d_k d_{k+1}} = 
\frac{|\pi(0)| \prod_{i=1}^k (n_i - m_i)}{d_k d_{k+1}}.
\]
Therefore, the distance from $d_k \alpha$ to the nearest integer is less than $\frac{|\pi(0)| \prod_{i=1}^k (n_i - m_i)}{d_{k+1}}$, which decays exponentially by Proposition~\ref{error}. If we define $\lambda = e^{2\pi i \alpha}$, then $\lambda^{d_k}
= \lambda^{|(\pi \circ \rho_k)(0)|}$ approaches $1$ with exponential rate. By definition,
$|(\pi \circ \rho_k)(1)| = d_k + (n_k - m_k) d_{k-1}$.
The distance from $(n_k - m_k) d_{k-1} \alpha$ to the nearest integer is less than 
$\frac{n_k |\pi(0)| \prod_{i=1}^{k-1} (n_i - m_i)}{d_k}$, which again decays exponentially by Proposition~\ref{error}. Therefore, 
$\lambda^{|(\pi \circ \rho_k)(1)|}$ approaches $1$ with exponential rate as well.

From this, an essentially identical argument to that of Host from \cite{MR873430} (see also p. 170-171 from \cite{MR2590264}) shows that $\lambda$ is an eigenvalue (in fact a continuous one). (His argument was for a single substitution $\tau$, but the construction can be done virtually without change with $\tau^k$ replaced by $\pi \circ \rho_k$.)

We can even represent $\alpha$ (and therefore $\lambda$) in terms of generalized continued fractions. If we defined an alternate sequence $(e_k)$ by the same recursion with $e_{-1} = 0$ and $e_0 = 1$, then $\frac{c_k}{e_k}$ is just the $k$th convergent to the generalized continued fraction
\[
\beta = \cfrac{a_1}{b_1+\cfrac{a_2}{b_2+\cfrac{a_3}{b_3+\ddots}}} = \cfrac{1}{m_1+\cfrac{n_1-m_1}{m_2+\cfrac{n_2-m_2}{m_3+\ddots}}}.
\]
In particular, $\frac{c_k}{e_k} \rightarrow \beta$.  Since $c_{-1}=1, c_{0}=0, e_{-1}=0, e_{1}=1$ and $c_{k}$, $d_{k}$ and $e_{k}$ are all defined by the same (linear) recursion, $d_{k} = d_{-1}c_{k} + d_{0}e_{k}$ for all $k$.  Then, as $d_{-1} = \len{\pi(1)} - \len{\pi(0)}$ and $d_{0} = \len{\pi(0)}$,
\[
\alpha = \lim \frac{c_{k}}{d_{k}} = \lim \Big{(}d_{-1} + d_{0}\Big{(}\frac{e_{k}}{c_{k}} - 1\Big{)}\Big{)}^{-1} = (d_{-1} + d_{0} (\beta^{-1} - 1))^{-1} = \frac{\beta}{\len{\pi(1)}\beta + \len{\pi(0)}(1 - \beta)}
\]
% If we define $(f_k)$ by the same recursion with $f_{-1} = -1$ and $f_0 = 1$, then $c_k + f_k = e_k$ (since the recursion is linear, $e_{-1} = 0 = c_{-1} + f_{-1}$ and $e_0 = 1 = c_0 + f_0$), and so $\frac{f_k}{e_k} \rightarrow 1 - \beta$.
%
%Finally, we note that $d_k = |\pi(1)| c_k + |\pi(0)| f_k$, since $d_{-1} = |\pi(1)| - |\pi(0)| = |\pi(1)| c_{-1} + |\pi(0)| f_{-1}$ and 
%$d_0 = |\pi(0)| = |\pi(1)| c_0 + |\pi(0)| f_0$. And $\alpha$ is the limit of the sequence $\frac{c_k}{d_k}$, which we can rewrite as
%\[
%\frac{c_k}{e_k} \left(\frac{d_k}{e_k}\right)^{-1} = \frac{c_k}{e_k}\left(|\pi(1)| \frac{c_k}{e_k} + |\pi(0)| \frac{f_k}{e_k}\right)^{-1} \rightarrow \beta \left(|\pi(1)| \beta + |\pi(0)|(1-\beta)\right)^{-1} = \frac{\beta}{|\pi(1)| \beta + |\pi(0)|(1-\beta)}.
%\]
Therefore, the eigenvalue $\lambda$ can be written as $\exp\left(2\pi i \left(\frac{\beta}{|\pi(1)| \beta + |\pi(0)|(1-\beta)}\right)\right)$.
\end{remark}

\section{A weak mixing subshift with \texorpdfstring{$C = \nicefrac{3}{2}$}{C = 3/2}}%complexity \texorpdfstring{$\nicefrac{3}{2}$}{3/2}}

\begin{theorem}\label{example}
There exists an infinite transitive subshift $X$ which is uniquely ergodic, has unique measure which is weak mixing, and 
for which $\limsup \frac{p(q)}{q} = \frac{3}{2}$.
\end{theorem}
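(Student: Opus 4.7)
The plan is to construct $X$ explicitly as an S-adic system built from the same substitution family $\tau_{m,n}$ used in Section 3, but with parameters $(m_k,n_k)$ chosen to \emph{exceed} the restrictions imposed by Proposition \ref{decomp}. The complexity bound $C < \nicefrac{4}{3}$ came from forcing $n_k \le 2m_k$ via the bispecial-counting estimates in Lemma \ref{induction}; to land exactly at $\nicefrac{3}{2}$, I would allow $n_k$ to be substantially larger than $2m_k$ along a carefully controlled sparse subsequence. Concretely, I would set $m_k = 1$ for every $k$, take $n_k \in \{2,3\}$ as a baseline, and at a sparse sequence of stages $k = k_j$ insert a substitution $\tau_{1,N_j}$ with $N_j \to \infty$. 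The background $\tau_{1,2}$ substitutions generate Sturmian-type behavior (complexity $\approx q$), while each large-$N_j$ insertion temporarily creates extra bispecial words which inflate the complexity; by tuning both the density of insertions and the growth rate of $N_j$, one aims to force $\limsup p(q)/q = \nicefrac{3}{2}$ and, as a byproduct, $\liminf p(q)/q = 1$, matching the refinements claimed after the theorem statement.

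For the complexity computation itself I would rely on Corollary \ref{RScor}, tracking right-special words at each length via the Rauzy-graph recursion inherited from the substitutive structure, in direct analogy with Proposition \ref{error}. The key observation is that with $m_k=1$ the recursion $d_{k+1} = d_k + (n_k-1)d_{k-1}$ governs the word-lengths; one then verifies that between insertions the ratio $p(q)/q$ drifts back down towards $1$, while immediately after an insertion $k_j$ the ratio spikes, and shows via direct estimation that the supremum of these spikes converges to exactly $\nicefrac{3}{2}$ (upper bound from the counting lemma, lower bound by exhibiting right-special words at a chosen scale $q_j$ just past $d_{k_j}$). Unique ergodicity should then follow for free from $\limsup p(q)/q < 3$ by the Boshernitzan--Damron--Fickenscher bound cited as item (1) in the introduction, once minimality is verified from primitivity of the composed substitutions.

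The main obstacle, as always in such constructions, is the weak-mixing conclusion. I would use the standard S-adic/rank-one eigenvalue criterion: $\lambda = e^{2\pi i\alpha}$ can be an eigenvalue only if the sequences $\|d_k \alpha\|_{\mathbb{R}/\mathbb{Z}}$ and $\|\len{(\pi\circ\rho_k)(1)}\,\alpha\|_{\mathbb{R}/\mathbb{Z}}$ are summable (this is the opposite side of the analysis in Remark \ref{alpha}, which manufactured eigenvalues precisely when such summability held). The idea is that at an insertion step the recursion reads $d_{k_j+1} = d_{k_j} + (N_j-1)d_{k_j-1}$; by choosing $N_j$ adaptively and large, one can make $\|(N_j-1)d_{k_j-1}\alpha\|$ bounded away from $0$ for any prescribed target $\alpha$, forcing at least one of $\|d_{k_j}\alpha\|$ or $\|d_{k_j+1}\alpha\|$ to fail the summability criterion. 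To rule out \emph{every} nontrivial $\alpha$ simultaneously, I would diagonalize: enumerate a countable dense set $\{\alpha_j\}$ of candidate eigenvalues and, at the $j$-th insertion stage, choose $N_j$ so as to kick $\alpha_j$ (and also preserve the kicks for $\alpha_1,\ldots,\alpha_{j-1}$, which only requires a congruence condition on $N_j$ modulo a bounded denominator). Rational $\alpha$ are handled separately by ensuring no periodic factor exists, which follows from aperiodicity of the S-adic limit.

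The delicate balancing that makes this the hard step is that the very mechanism used to kill eigenvalues (large $N_j$) is also what drives the complexity \emph{up} toward $\nicefrac{3}{2}$; pushing $N_j$ too hard or too frequently will overshoot the complexity bound. The construction must therefore interleave three constraints — sparsity of insertions, growth rate of $N_j$, and the diagonalization-driven congruence conditions — in such a way that the complexity limsup lands exactly at $\nicefrac{3}{2}$ while every $\alpha$ is killed at some stage. I expect this interleaving, rather than either the complexity computation or the eigenvalue criterion in isolation, to be the essential difficulty, and to be what forces a multi-scale quantitative argument rather than a purely qualitative one.
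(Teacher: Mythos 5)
There are two genuine gaps, one in each half of the argument. First, the parameter choice $m_k = 1$ with $n_k = N_j \to \infty$ at the insertion stages forces $\limsup p(q)/q = 2$, not $\frac{3}{2}$. The mechanism that inflates complexity in these S-adic systems is that for all lengths $\ell \in (\len{s_k v_k^{m_k-1}p_k}, \len{s_k v_k^{n_k-2}p_k}]$ there are two right-special words (a suffix of $p_{\infty}$ and a suffix of $s_k v_k^{n_k-2}p_k$), so $p$ has slope $2$ over an interval of length $(n_k - m_k - 1)\len{v_k}$ sitting on top of a base of length roughly $(m_k-1)\len{v_k} + \len{s_k p_k}$. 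Evaluating at the top of the spike gives $p(q)/q \approx 1 + \frac{n_k - m_k - 1}{n_k - 2}$, which tends to $2$ when $m_k = 1$ and $n_k \to \infty$; sparsity of the insertions controls how often spikes occur, not their height, so it cannot lower this. Landing at exactly $\frac{3}{2}$ requires controlling the ratio $n_k/m_k$: the paper takes $n_k = 2m_k$ with $m_k \to \infty$, so each spike peaks at $1 + \frac{m_k-1}{2m_k-2} + o(1) \to \frac{3}{2}$, and Proposition~\ref{sum2} converts this into the exact value of the limsup.

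Second, and more fundamentally, the proposed diagonalization over ``a countable dense set of candidate eigenvalues'' cannot establish weak mixing: the candidates range over all of $\mathbb{R}/\mathbb{Z}$, which is uncountable, and although the eigenvalue group of the system is countable, it is not known in advance, so it cannot be enumerated and killed term by term --- destroying a dense countable set leaves uncountably many $\alpha$ untouched, any of which could a priori be an eigenvalue. Relatedly, you invoke summability of $\|d_k\alpha\|$ as a \emph{necessary} condition for $\alpha$ to be an eigenvalue, but Host-type summability is the \emph{sufficient} direction; the necessary condition one can actually extract is only that $\|i_k d_k \alpha\| \to 0$ for controlled multiples $i_k$, and even that requires the Rokhlin-tower overlap estimate $\mu(\sigma^{i_k d_k}B_k \cap B_k) > \frac{1}{6}\mu(B_k)$ that the paper proves. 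The paper's route is uniform in $\alpha$: the decay forces integer approximants $c_k'$ of $d_k r$ to satisfy the same recursion $c_{k+1}' = b_{k+1}c_k' + a_{k+1}c_{k-1}'$ as the heights $d_k$, and then the lower bound $\left|\frac{c_k'}{d_k} - \frac{c_{k+1}'}{d_{k+1}}\right| \geq K d_{k+1}^{-1}$ --- valid precisely because $a_{k+1} = b_k$ and $\sum b_k^{-1} < \infty$ keep the product $\prod_j \frac{a_{j+1}d_{j-1}}{d_j}$ bounded away from $0$ (the anti-Pisot feature) --- contradicts the upper bounds; rational eigenvalues are excluded by arranging, via Dirichlet's theorem, that every $d_k$ is prime. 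Some argument of this all-$\alpha$-at-once type is unavoidable, and the countable diagonalization is not a viable substitute for it.
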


The complexity estimates in Theorem \ref{example} will follow from a general formula for word complexity of subshifts with the structure from Proposition \ref{decomp}, which may be of independent interest.

%\subsection{Estimating word complexity for subshifts with our substitutive structure}

% This follows by induction since otherwise there must be some location where one concatenation reads $(\pi \circ \rho_{k})(0) (\pi \circ \rho_{k})(1)$ and the other reads $(\pi \circ \rho_{k})(1) (\pi \circ \rho_{k})(0)$ but by Lemma \ref{B1} this would mean both $(\pi \circ \rho_{k})(0)$ and $(\pi \circ \rho_{k})(1)$ are multiples of the same word making the subshift periodic.  The reader is referred to \cite{MR1489074}.

%Throughout this section, let $(X,\sigma)$ be the orbit closure of $x^{(m_{k}),(n_{k})}$ for some $\pi : \{ 0, 1 \} \to \mathcal{A}^*$ such that $\pi(0)$ and $\pi(1)$ begin with different letters.  Write $v_{k} = (\pi \circ \rho_{k-1})(0)$ and $u_{k} = (\pi \circ \rho_{k-1})(1)$ and $p_{k}$ for the maximal common prefix of $v_{k}$ and $u_{k}$ and $s_{k}$ for the maximal common suffix of $v_{k}^{\infty}$ and $v_{k}^{\infty}u_{k}$.

\begin{proposition}\label{sum2}
Let $X$ be the orbit closure of $x^{(m_{k}),(n_{k})}$ for $\pi$ and $(\tau_{m_{k},n_{k}})$ satisfying the conclusions of Proposition \ref{decomp}. %\dcout{for some $\pi : \{ 0, 1 \} \to \mathcal{A}^*$ such that $\pi(0)$ and $\pi(1)$ begin with different letters.  Write $v_{k} = (\pi \circ \rho_{k-1})(0)$ and $u_{k} = (\pi \circ \rho_{k-1})(1)$ and $p_{k}$ for the maximal common prefix of $v_{k}$ and $u_{k}$ and $s_{k}$ for the maximal common suffix of $v_{k}^{\infty}$ and $v_{k}^{\infty}u_{k}$.}
Then there exists a constant $K$ such that for $k \geq 2$,
\begin{align*}
p(q) &= \left\{ \begin{array}{ll} q + \sum_{j=2}^{k} (n_{j} - m_{j} - 1)\len{v_{j}} + K & \text{if}~\len{s_{k}v_{k}^{n_{k}-2}p_{k}} \leq q \leq \len{s_{k+1}v_{k+1}^{m_{k+1}-1}p_{k+1}} \\
2q - \len{s_{k}v_{k}^{m_{k}-1}p_{k}} + \sum_{j=2}^{k-1} (n_{j} - m_{j} - 1)\len{v_{j}} + K & \text{if}~\len{s_{k}v_{k}^{m_{k}-1}p_{k}} \leq q \leq \len{s_{k}v_{k}^{n_{k}-2}p_{k}}. \end{array} \right.
\end{align*}
\end{proposition}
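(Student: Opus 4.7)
The plan is to compute $p(q)$ via Corollary~\ref{RScor}, which reduces the task to determining the number $r_q = |RS_q(X)|$ of right-special words at each length $q$. The unique decomposition property (Remark~\ref{ud}) forces $|F(w)| = 2$ for every right-special $w$, since each level-$k$ word has only two possible successor words. It therefore suffices to show that $r_q = 2$ for $q \in (\len{s_k v_k^{m_k-1} p_k}, \len{s_k v_k^{n_k-2} p_k}]$ and $r_q = 1$ for $q \in (\len{s_k v_k^{n_k-2} p_k}, \len{s_{k+1} v_{k+1}^{m_{k+1}-1} p_{k+1}}]$.

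The lower bounds on $r_q$ follow directly from Lemma~\ref{induction}, which at each level $k$ produces two right-special words $\alpha_k := s_k v_k^{n_k-2} p_k$ and $\beta_k := s_k v_k^{m_k-1} u_k v_k^{m_k-1} p_k$ with maximum common suffix $s_k v_k^{m_k-1} p_k$. Taking suffixes of appropriate length (which remain right-special) yields two distinct right-special words for each $q$ in the double range, and at least one for each $q$ in the single range. Using the recursions $s_{k+1} = s_k v_{k+1}$, $p_{k+1} = v_k^{m_k-1} p_k$, and $v_{k+1} = v_k^{m_k-1} u_k$ from Remark~\ref{ps}, one verifies that $\beta_k = s_k v_{k+1} p_{k+1}$ coincides with the suffix of $s_{k+1} v_{k+1}^{m_{k+1}-1} p_{k+1} = s_k v_{k+1}^{m_{k+1}} p_{k+1}$ of length $\len{\beta_k}$; this relies on $s_k$ being a suffix of $v_{k+1}^\infty$, which follows from Lemma~\ref{s2} applied at level $k$ (since $v_{k+1}^\infty$ is a left-infinite concatenation of $v_k$ and $u_k$, and $v_k$ is a suffix of $u_k$ with the two not powers of a common word). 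Thus the unique right-special word grows continuously from $\beta_k$ through the common part of $\alpha_{k+1}, \beta_{k+1}$ as $q$ increases.

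The main obstacle is the matching upper bound $r_q \leq 2$ (respectively $\leq 1$). The plan is to proceed by induction on $k$, using a level-$k$ analogue of the final bullet of Proposition~\ref{decomp}: every sufficiently long right-special word must be a suffix of a concatenation of $v_k$ and $u_k$. Any such right-special suffix must end at a branching point in the level-$(k+1)$ decomposition, and a careful enumeration (each such branching point corresponds either to the suffix of $\alpha_{k+1}$, of $\beta_{k+1}$, or their common truncation) yields the exact RS counts. Summing the $+1$ and $+2$ contributions via Corollary~\ref{RScor} from a base length absorbed into the constant $K$ (e.g., $r = \len{s_2 v_2^{m_2-1} p_2}$) produces the claimed formulas, noting that the length of each double range at level $j$ is exactly $(n_j - m_j - 1)\len{v_j}$, which accounts for the $\sum_{j=2}^{k}(n_j - m_j - 1)\len{v_j}$ term.
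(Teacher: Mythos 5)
Your overall strategy is the same as the paper's: identify the right-special words of each length, show they form exactly two families, compute where those families coincide via maximal common suffixes, and sum with Lemma~\ref{RSlem}/Corollary~\ref{RScor}. Indeed your $\beta_k = s_kv_k^{m_k-1}u_kv_k^{m_k-1}p_k = s_{k+1}p_{k+1}$ are exactly the finite stages of the left-infinite word $p_{\infty} = \lim_k s_kp_k$ that the paper works with, and your $\alpha_k = s_kv_k^{n_k-2}p_k$ are the paper's second family. The lower-bound half of your argument (these words are right-special by Lemma~\ref{induction}, their maximal common suffix is $s_kv_k^{m_k-1}p_k$ by Lemma~\ref{s3}, and $\beta_k$ persists as a suffix of the level-$(k+1)$ words via Lemma~\ref{s2} and the recursions of Remark~\ref{ps}) is correct and matches the paper.

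The gap is the upper bound, which is the actual content of the proposition and which you dispose of in one sentence (``a careful enumeration \dots\ yields the exact RS counts''). Knowing that every sufficiently long right-special word is a suffix of a concatenation of $u_k$ and $v_k$ does not by itself imply it is a suffix of $\alpha_k$ or of $p_{\infty}$: a priori a right-special word could end with $v_k^{t}p_k$ for any $t$ and could extend arbitrarily far to the left of $s_k$. The paper closes this by taking $k$ maximal with $s_kp_k$ a suffix of $w$, writing $u_k = u_k'v_k^{\ell_k}$ with $\ell_k$ maximal, and then arguing: (i) if $w$ is not a suffix of $s_kv_k^{t-\ell_k}p_k$, the letters of $w$ to the left of $s_k$ are forced to come from $u_k$, so $u_kv_k^{t-\ell_k}p_k$ is right-special, which forces $t-\ell_k = m_k-1$ and hence $p_{k+1}$ to be a suffix of $w$, contradicting the maximality of $k$; and (ii) $t-\ell_k \le n_k-2$, since otherwise $v_k^{n_k-1+\ell_k}v_k$ would be in the language, contradicting the maximality of $\ell_k$. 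Neither step is routine, and neither is captured by ``branching points in the level-$(k+1)$ decomposition'' (note the branch actually occurs $|p_k|$ letters \emph{into} a level-$k$ block, not at a block boundary). A smaller issue: $|F(w)|=2$ for long right-special $w$ does not follow from unique decomposability alone (the alphabet $\mathcal{A}$ may have more than two letters); it follows from such $w$ having $s_1v_1^{m_1-1}$ as a suffix, whose two return words $v_1$ and $u_1v_1^{m_1-1}$ begin with different letters. Without the enumeration argument the bounds $r_q\le 2$ and $r_q\le 1$ are unproven, so as written the proposal establishes only a lower bound on $p(q)$.
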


%Then every right-special word of length at least $\len{s_{2}p_{2}}$ is a suffix of at least one of the right-special words:
%\begin{itemize}
%\item the left-infinite word $p_{\infty} := \lim s_{k}p_{k} = \lim s_{1}v_{2} \cdots v_{k} v_{k}^{m_{k}-1}v_{k-1}^{m_{k-1}-1} \cdots v_{1}^{m_{1}-1}$; or
%\item $s_{k}v_{k}^{n_{k}-2}p_{k}$ for $k > 0$.
%\end{itemize}
%\end{proposition}
\begin{proof}
We claim first that the words $p_{\infty} := \lim s_{k}p_{k} = \lim s_{1}v_{2} \cdots v_{k} v_{k}^{m_{k}-1}v_{k-1}^{m_{k-1}-1} \cdots v_{1}^{m_{1}-1}$ and $s_{k}v_{k}^{n_{k}-2}p_{k}$ for $k > 0$ are right-special.

Since $v_{k+1} = v_{k}^{m_{k}-1}u_{k}$ and $u_{k+1} = v_{k}^{n_{k}-1}u_{k}$, $p_{k+1} = v_{k}^{m_{k}-1}p_{k}$.  By induction then $p_{k+1} = v_{k}^{m_{k}-1}\cdots v_{1}^{m_{1}-1}$ as $p_{1}$ is empty.  By Lemma \ref{s2}, $s_{k}p_{k}$ is a suffix of $s_{k+1}p_{k+1} = s_{k}v_{k}^{m_{k}-1}u_{k}v_{k}^{m_{k}-1}p_{k}$.  As $\len{s_{k+1}} > \len{s_{k}}$, this shows $p_{\infty}$ exists and is left-infinite.
 
By definition of $p_{k}$ as the maximal common prefix, $p_{k}\pi(0)$ and $p_{k}\pi(1)$ are both in the language since each of $u_{k}$ and $v_{k}$ must have one of them as a prefix and they cannot have the same one.  So $p_{k}$ is right-special for each $k$ (as $\pi(0)$ and $\pi(1)$ begin with different letters) hence $p_{\infty}$ is right-special.  That $s_{k}v_{k}^{n_{k}-2}p_{k}$ is right-special follows from Lemma \ref{induction}.

%Since $s_{k+1} = s_{k}v_{k+1}$, we have $s_{k} =s_{1} v_{2} \cdots v_{k}$ for all $k$. Then $s_{\infty}$ exists as a right-infinite word.  Since $v_{k}u_{k}$ and $u_{k}v_{k}$ are both in the language and $\len{s_{k}} < \len{v_{k}u_{k}}$ (Lemma \ref{s} as $u_{k}$ and $v_{k}$ are not multiples of the same word as $v_{k}$ is not a prefix of $u_{k}$), $s_{k}$ is left-special as $s_{1}$ must be preceded by different letters in $u_{1}v_{1}$ and $v_{1}u_{1}$ by maximality of $s_{1}$.  So $s_{\infty}$ is left-special.

%Since $v_{k+1}u_{k+1} = v_{k}^{m_{k}-1}u_{k}v_{k}^{n_{k}-1}u_{k}$ appears in a concatenation and, by Lemma \ref{s2}, has $s_{k}v_{k}^{n_{k}-1}u_{k}$ as a suffix which has $s_{k}v_{k}^{n_{k}-2}v_{k}$ as a prefix and, again by Lemma \ref{s2}, has $s_{k}v_{k}^{n_{k}-2}u_{k}$ as a suffix, $s_{k}v_{k}^{n_{k}-2}p_{k}$ is right-special.

%Likewise $v_{k}^{m_{k}-1}u_{n}v_{k}^{n_{k}-1}p_{k}$ has $v_{k}u_{k}v_{k}^{n_{k}-1}p_{k}$ as a suffix.  That word has $v_{k}u_{k}v_{k}^{n_{k}-2}v_{k}$ as a prefix and $u_{k}v_{k}v_{k}^{n_{k}-2}u_{k}$ as a suffix which shows that $s_{k}v_{k}^{n_{k}-2}p_{k}$ is left-special.

%A similar argument shows that if $w$ is left-special then it is either a prefix of $s_{\infty}$ or of $w_{k}$.

Next we claim that every right-special word is a suffix of $p_{\infty}$ or of $s_{k}v_{k}^{n_{k}-2}p_{k}$ for some $k > 0$.

Since every right-special word of length at least $\len{s_{1}v_{1}^{m_{1}-1}}$ is a suffix of a concatenation of $u_{1}$ and $v_{1}$, any right-special word with $s_{2}p_{2} = s_{1}v_{1}^{m_{1}-1}u_{1}v_{1}^{m_{1}-1}$ as a suffix is of the form $xu_{1}v_{1}^{m_{1}-1}$ where $x$ is a suffix of a concatenation of $u_{1}$ and $v_{1}$.  If $x$ were not a suffix of a concatenation of $v_{2}$ and $u_{2}$ then $u_{1}v_{1}^{r}u_{1}$ for $r \ne m_{1}-1, n_{1}-1$ must appear somewhere in $x$ but this is impossible 
by definition of $\tau_{m_1, n_1}$.
%as $\pi \circ \tau_{m_{1},n_{1}}(\{ 0,1 \}^{\star})$ never reads $u_{1}v_{1}^{r}u_{1}$.  
So every right-special word with $s_{2}p_{2}$ as a suffix is of the form $xp_{2}$ where $x$ is a suffix of a concatenation of $v_{2}$ and $u_{2}$.

Assume that any word with $s_{k}p_{k}$ as a suffix is necessarily of the form $xp_{k}$ where $x$ is a concatenation  of $u_{k}$ and $v_{k}$.  Let $w$ be a word which has $s_{k+1}p_{k+1}$ as a suffix.  Since $s_{k+1}p_{k+1} = s_{k}v_{k+1}v_{k}^{m_{k}-1}p_{k}$ which has $s_{k}p_{k}$ as a suffix, $w = xv_{k+1}v_{k}^{m_{k}-1}p_{k}$ where $x$ is a suffix of a concatenation of $u_{k}$ and $v_{k}$.  If $x$ were not a suffix of a concatenation of $u_{k+1}$ and $v_{k+1}$ then somewhere in $xv_{k+}$ there must appear $u_{k}v_{k}^{r}u_{k}$ for $r \ne n_{k}-1, m_{k}-1$ or $v_{k}^{t}$ for $t > n_{k}-1$.  But this is impossible by definition of $\tau_{m_k, n_k}$.
%as $\pi \circ \tau_{m_{1},n_{1}} \cdots \circ \tau_{m_{k},n_{k}}(\{ 0,1 \}^{\star})$ never reads $u_{k}v_{k}^{r}u_{k}$.  
By induction, then for all $k$, any word with suffix $s_{k}p_{k}$ is of the form $xp_{k}$ where $x$ is a suffix of a concatenation of $u_{k}$ and $v_{k}$.

Since $v_{k}$ is a suffix of $u_{k}$ for $k > 1$, write $u_{k} = u_{k}^{\prime}v_{k}^{\ell_{k}}$ for $\ell_{k} \geq 1$ maximal.  Note that $s_{k}$ has $v_{k}^{\ell_{k}}$ as a suffix.

Let $w$ be a right-special word with $\len{w} \geq \len{s_{1}p_{1}}$.  Take $k \geq 1$ maximal so that $w$ has $s_{k}p_{k}$ as a suffix.  By the above, $w = xp_{k}$ is where $x$ is a suffix of a concatenation of $u_{k}$ and $v_{k}$ in any left-infinite word.  Choose $t \geq 0$ maximal so that $v_{k}^{t}p_{k}$ is a suffix of $w$.

Suppose $w$ is not a suffix of $s_{k}v_{k}^{t-\ell_{k}}p_{k}$.  Then $u_{k}v_{k}^{t-\ell_{k}}p_{k}$ must be right-special since all letters to the left of $s_{k}$ are forced to come from $u_{k}$ by maximality of $\ell_{k}$.  As the $p_{k}$ must appear as a prefix of both $v_{k}$ and $u_{k}$, then $u_{k}v_{k}^{t-\ell_{k}}u_{k}$ and $u_{k}v_{k}^{t-\ell_{k}}v_{k}$ are in the language so $t - \ell_{k} = m_{k} - 1$.  But then $w$ has $v_{k}^{m_{k}-1}p_{k} = p_{k+1}$ as a suffix, contradicting the maximality of $k$.

So $w$ is a suffix of $s_{k}v_{k}^{t - \ell_{k}}p_{k}$.
Suppose $t - \ell_{k} \geq n_{k} - 1$.  Then $w$ has $v_{k}^{n_{k}-1+\ell_{k}}p_{k}$ as a suffix.   As $w$ is right-special, this requires $v_{k}^{n_{k}-1+\ell_{k}}v_{k}$ be in the language.  But that is only possible if $u_{k}$ has $v_{k}^{\ell_{k}+1}$ as a suffix, contradicting the maximality of $\ell_{k}$.

So $t - \ell_{k} \leq n_{k} - 2$.  Then $w$, being a suffix of $s_{k}v_{k}^{t-\ell_{k}}p_{k}$, is a suffix of $s_{k}v_{k}^{n_{k}-2}p_{k}$.
%\end{proof}
%
%\begin{corollary}\label{sum}
%Let $X$ be the orbit closure of $x^{(m_{k}),(n_{k})}$.  Then there exists a constant $K$ such that for $k \geq 2$,
%\begin{align*}
%p(\len{s_{k}v_{k}^{n_{k}-2}p_{k}}) &= \len{s_{k}v_{k}^{n_{k}-2}p_{k}} + \sum_{j=2}^{k} (n_{j} - m_{j} - 1)\len{v_{j}} + K
%\end{align*}
%\end{corollary}
%\begin{proof}

Finally, we establish the complexity function is as claimed.
Since $p_{\infty}$ has $s_{k+1}p_{k+1} = s_{k}v_{k+1}v_{k}^{m_{k}-1}p_{k}$ as a suffix, by Lemma \ref{s2}, it has $s_{k}u_{k}v_{k}^{m_{k}-1}p_{k}$ as a suffix.  By Lemma \ref{s3}, the maximal common suffix of $p_{\infty}$ and $s_{k}v_{k}^{n_{k}-2}p_{k}$ is then $s_{k}v_{k}^{m_{k}-1}p_{k}$.  Likewise the maximal common suffix of $s_{k}v_{k}^{n_{k}-2}p_{k}$ and $s_{k^{\prime}}v_{k^{\prime}}^{n_{k^{\prime}}-2}p_{k^{\prime}}$ for $k^{\prime} > k$ is $s_{k}v_{k}^{m_{k}-1}p_{k}$ as $v_{k+1}$ has $u_{k}$ as a suffix.  Therefore each $s_{k}v_{k}^{n_{k}-2}p_{k}$ provides $(n_{k}-2 - (m_{k}-1))\len{v_{k}}$ right-special words (with lengths in $(\len{s_k v_k^{m_k-1} p_k}, \len{s_{k}v_{k}^{n_{k}-2}p_{k}}]$) which are not suffixes of $p_{\infty}$.  Set $K = p(\len{s_{2}p_{2}}) - \len{s_{2}p_{2}}$ and the claim follows.
\end{proof}

\begin{proof}[Proof of Theorem \ref{example}]
Define any increasing $(n_k)$, $(m_k)$ so that $n_k = 2m_k$ for all $k$, $m_1 = 1$, and the sum $\sum_k (n_k)^{-1} < \infty$. Then define $\pi$ to be the identity, define $\tau_{m_k, n_k}$, $\rho_k, a_k, b_k, c_k, d_k$ as in the proof of Proposition \ref{error}, and note that $a_{k+1} = n_k - m_k = m_k = b_k$ for all $k$ and 
$\sum_k (b_k)^{-1} < \infty$. Just as before, $d_k = |\rho_k(0)|$ for all $k$, % (note that there is no $\pi$ in our construction, or alternately $\pi$ is identity on $\{0,1\}$), 
and we wish to impose the additional condition that $d_k$ is prime for every $k>1$. This is easily achieved via induction. First, 
$d_0 = d_1 = 1$, so $d_2 = b_2 d_1 + a_2 d_0 = m_2 + 1$, which can clearly be chosen prime. Then, assume that $d_k$ is prime, and recall
that $d_{k+1} = b_{k+1}d_k+a_{k+1}d_{k-1}$. Both $a_{k+1} = b_k$ and $d_{k-1}$ are positive and less than the prime $d_k$ (since $d_k = b_k d_{k-1} + a_k d_{k-2}$ and $d_{k-2}$ is positive for $k > 1$), meaning that 
$d_k$ and $a_{k+1}d_{k-1}$
are positive and coprime. Then by Dirichlet's theorem, there exist infinitely many choices of
$b_{k+1}$ so that $d_{k+1}$ is prime. As long as the sequence $(b_k)$ is chosen large enough at
each step, we will maintain the condition $\sum_k (b_k)^{-1} < \infty$.

Let $X$ be the orbit closure of $x^{(m_{k}),(n_{k})}$.
$X$ is minimal by construction so by \cite{boshernitzan}, $X$ is uniquely ergodic with unique measure $\mu$.

Suppose for a contradiction that $X$ is not weak mixing, and so there is an eigenvalue $\lambda \neq 1$ with measurable eigenfunction $f$. Our method is again based on the Host's arguments from \cite{MR873430}, where he showed that the existence of an eigenfunction can be used to obtain Diophantine conditions involving the lengths of substitution words, which can be viewed as heights of Rokhlin towers.

One can define Rokhlin towers by $B_k = [\rho_k(0)]$, $h_k = |\rho_k(0)|$, and $T_k = \bigcup_{j = 0}^{h_k - 1} \sigma^j B_k$; since $m_k, n_k \rightarrow \infty$, $\mu(T_k) \rightarrow 1$.  By Remark \ref{ud}, $X$ is uniquely decomposable so the levels of the towers are disjoint. Then, for each $k$, define 
\[
f_k(x) =\sum_{j=0}^{h_{k}-1}  \frac{1}{\mu(B_k)} \Big{(}\int_{\sigma^j B_k} f \ d\mu\Big{)} \bbone_{\sigma^j B_{k}}(x)
\]
i.e., $f_{k}(x) = (\mu(B_k))^{-1}(\int_{\sigma^j B_k} f \ d\mu)$ for $x \in \sigma^j B_{k}$ and $f_{k}(x) = 0$ for $x \notin T_k$.

By the Lebesgue Differentiation Theorem, as $\mu(T_k) \to 1$ and $\mu(\sigma^{j}B_{k}) \to 0$, $f_{k}$ converge almost everywhere to $f$.

Observe that $\sigma^{d_{k}} = \sigma^{|\rho_{k}(0)|}$ takes every occurrence of $\rho_{k}(0)$ to an occurrence of $\rho_{k}(0)$ except for those which are immediately prior to an occurrence of $\rho_{k}(1)$ in some $\rho_{k+1}(0)$ or $\rho_{k+1}(1)$. Then for all $t > 0$, $\sigma^{d_{k+t}}$ takes all occurrences of $\rho_{k}(0)$ appearing in a $\rho_{k+t}(0)$ to an occurrence of $\rho_{k}(0)$ except possibly for those appearing in a $\rho_{k+t}(0)$ immediately prior to a $\rho_{k+t}(1)$.

Let $\{ i_{k} \}$ be any sequence such that $0 < i_{k} < 0.5(m_{k+1}-1)$.  Then as above, for all $t > 0$,
$\sigma^{i_{k+t}d_{k+t}}$ takes all occurrences of $\rho_{k}(0)$ in a $\rho_{k+t}(0)$ to an occurrence of $\rho_{k}(0)$ except possibly for those appearing in a $\rho_{k+t}(0)$ less than $i_{k+t}$ occurrences before a $\rho_{k+t}(1)$ in some $\rho_{k+t+1}(0)$ or $\rho_{k+t+1}(1)$. We also note that since $n_{k+t+1} = 2m_{k+t+1}$, at least one-third of the $\rho_k(0)$ appearing in any $x \in X$ 
are part of some $\rho_{k+t}(0)$. Therefore, %\rp{We recall that there are more $\rho_{k}(0)$ than $\rho_{k}(1)$ in all points of $X$ (since $m_{k} > 1$). Therefore, }
\[
\mu(\sigma^{i_{k+t}d_{k+t}}[\rho_{k}(0)] \cap [\rho_{k}(0)]) \geq \frac{m_{k+t}-1-i_{k+t}}{m_{k+t}-1} \left( \frac{1}{3} \mu([\rho_{k}(0)])\right)
\]
so, since $i_{k+t} < 0.5(m_{k+t}-1)$,
\[
\mu(\sigma^{i_{k+t}d_{k+t}}(\sigma^j B_{k}) \cap (\sigma^j B_{k})) > \frac{1}{6}\mu(\sigma^j B_{k}).
\]
Then $f_{k}(\sigma^{i_{k+t}d_{k+t}}x) = f_k(x)$ for a set of measure at least $\frac{1}{6}\mu(T_{k})$.  Since $f_{k} \to f$ almost everywhere and $\mu(T_{k}) \to 1$, there is then a positive measure set such that for any sufficiently small $\epsilon > 0$ and almost every $x$ in the set, there exists $k$ so that for all $t$, $|f(\sigma^{i_{k+t}d_{k+t}}x) - f(x)| < \epsilon$.  Therefore $\lambda^{i_{k}d_{k}} \to 1$.

We will prove that this is impossible. Define $r \in (0,1)$ by $\lambda = e^{2\pi i r}$; then $\langle i_k d_k r \rangle \rightarrow 0$ whenever $0 < i_k < 0.5(m_{k+1} - 1)$, which implies that for large enough $k$ (say $k \geq k_0$), 
$\langle d_k r \rangle < 0.05(m_{k+1} - 1)^{-1}$. Clearly $r$ cannot be rational, since all $d_k$ are $1$ or prime.
Since $5n_{k+1} = 10m_{k+1} < 20(m_{k+1}-1)$, for $k \geq k_0$, $\langle d_k r \rangle < 0.2(n_{k+1})^{-1}$. This implies that for all $k \geq k_0$, there exists $c'_k \in \mathbb{Z}$,
so that $\left|r - \frac{c'_k}{d_k} \right| < 0.2(d_k n_{k+1})^{-1} < 0.2(d_{k+1})^{-1}$. (Recall that
$d_{k+1} = b_{k+1} d_k + a_{k+1} d_{k-1} < 2b_{k+1} d_k = n_{k+1} d_k$.) We will prove the following: for all $k > k_0$,

\begin{equation}\label{recursion}
c'_{k+1} = b_{k+1} c'_k + a_{k+1} c'_{k-1}. 
\end{equation}

Assume that $k > k_0$, and denote the right-hand side of (\ref{recursion}) by $c''_{k+1}$. Then, 
\begin{equation}\label{ckbds}
\left|r - \frac{c'_k}{d_k} \right| < 0.2(d_{k+1})^{-1} \textrm{ and } 
\left|r - \frac{c'_{k-1}}{d_{k-1}} \right| < 0.2(d_k)^{-1},
\end{equation}
and so
\begin{equation}\label{bd1}
\left|d_{k+1} r - c'_k \frac{d_{k+1}}{d_k} \right| < 0.2.
\end{equation}

We can simplify
\begin{equation}\label{bd2}
\left|c'_k \frac{d_{k+1}}{d_k} - c''_{k+1}\right| = \left|c'_k\left(b_{k+1} + \frac{a_{k+1} d_{k-1}}{d_k}\right) - b_{k+1} c'_k - a_{k+1} c'_{k-1}\right| = \left| c'_k a_{k+1} \frac{d_{k-1}}{d_k} - a_{k+1} c'_{k-1} \right|.
\end{equation}

By the second inequality in (\ref{ckbds}),
\begin{equation}\label{bd3}
\left|a_{k+1} d_{k-1} r - a_{k+1} c'_{k-1}\right| < \frac{0.2a_{k+1} d_{k-1}}{d_{k}} = \frac{0.2b_k d_{k-1}}{d_{k}} < 0.2.
\end{equation}

Similarly, by the first inequality in (\ref{ckbds}),
\begin{equation}\label{bd4}
\left|a_{k+1} d_{k-1} r - c'_k a_{k+1} \frac{d_{k-1}}{d_k}\right| < \frac{0.2a_{k+1} d_{k-1}}{d_{k+1}} = \frac{0.2b_k d_{k-1}}{d_{k+1}} < 0.2.
\end{equation}

Therefore, by the triangle inequality and (\ref{bd2})-(\ref{bd4}),
\[
\left|c'_k \frac{d_{k+1}}{d_k} - c''_{k+1}\right| < 0.4.
\]

Combining with (\ref{bd1}) via the triangle inequality yields
\begin{equation}\label{bd5}
\left|d_{k+1} r - c''_{k+1}\right| < 0.6.
\end{equation}
Recall that by definition, 
\begin{equation}\label{bd6}
\left|r - \frac{c'_{k+1}}{d_{k+1}}\right| < 0.2 (d_{k+2})^{-1}, \textrm{ and so }
\left|d_{k+1} r - c'_{k+1}\right| < 0.2 \frac{d_{k+1}}{d_{k+2}} < 0.2.
\end{equation}
Finally, (\ref{bd5}) and (\ref{bd6}) imply that $c'_{k+1} = c''_{k+1}$ (since they are both integers), completing the proof that (\ref{recursion}) holds for $k > k_0$.

Since $r$ is irrational and $\frac{c'_k}{d_k} \rightarrow r$, we may also assume without loss of generality (by increasing $k_0$) that $\frac{c'_{k_0}}{d_{k_0}} \neq 
\frac{c'_{1+k_0}}{d_{1+k_0}}$. Then, it is easily proved by induction that for all $k > k_0$,
\[
\left| \frac{c'_k}{d_k} - \frac{c'_{k+1}}{d_{k+1}} \right| = |c'_{1+k_0} d_{k_0} - c'_{k_0} d_{1+k_0}| \frac{a_{1+k_0} \ldots a_{k+1}}{d_k d_{k+1}}.
\]
We abbreviate $Q = |c'_{1+k_0} d_{k_0} - c'_{k_0} d_{1+k_0}|$, and note that $Q \neq 0$ by the assumption that $\frac{c'_{k_0}}{d_{k_0}} \neq \frac{c_{1+k_0}}{d_{1+k_0}}$. 
We can now bound the distance from above using that $a_{j+1}d_{j-1} \leq d_{j}$:
\begin{equation}\label{finalbd}
\left| \frac{c'_k}{d_k} - \frac{c'_{k+1}}{d_{k+1}} \right| = \frac{Q a_{1+k_0} \ldots a_{k+1}}{d_k d_{k+1}}
= \frac{Q}{d_{k_0 - 1} d_{k+1}} \prod_{j=k_0}^{k} \frac{a_{j+1} d_{j-1}}{d_j} > \frac{Q}{d_{k_0 - 1} d_{k+1}} \prod_{j=k_0}^{\infty} \frac{a_{j+1} d_{j-1}}{d_j}.
\end{equation}
Note that 
\[
\frac{d_j}{a_{j+1} d_{j-1}} = \frac{b_j}{a_{j+1}} + \frac{a_j d_{j-2}}{a_{j+1} d_{j-1}} \leq \frac{b_j}{a_{j+1}} + \frac{b_{j-1} d_{j-2}}{a_{j+1} d_{j-1}} < \frac{b_j + 1}{a_{j+1}} < \frac{b_j}{a_{j+1} - 1} = \frac{b_{j}}{b_{j} - 1} =
 \left(1 - b_j^{-1}\right)^{-1}.
\]
Therefore, the product $\prod_{j=k_0}^{\infty} \frac{a_{j+1} d_{j-1}}{d_j}$ is greater than 
$\prod_{j=k_0}^{\infty} \left(1 - \frac{1}{b_j}\right)$, which converges to a positive limit $L$ by the assumption that $\sum b_k^{-1}<\infty$. Combining with (\ref{finalbd}) yields that there exists a positive constant $K = \frac{Q L}{d_{k_0 - 1}}$ so that for all $k > k_0$, 
\begin{equation}\label{finalbd2}
\left| \frac{c'_k}{d_k} - \frac{c'_{k+1}}{d_{k+1}} \right| > \frac{K}{d_{k+1}}.
\end{equation}
However, recall that $|r - \frac{c_{k}^\prime}{d_{k}}| < 0.2(d_{k}n_{k+1})^{-1}$ meaning $|r d_{k+1} n_{k+1} - c_{k}^{\prime}n_{k+1}| < 0.2$ so $c_{k}^{\prime}n_{k+1}$ is the closest integer to $r d_{k+1} n_{k+1}$.  
Since $\langle 0.25n_{k+1}d_{k}r \rangle \to 0$, this implies there exists $k_{1} > k_{0}$ such that $|r d_{k+1} n_{k+1} - c_{k}^{\prime} n_{k+1}| < 0.5K$.  Then $|r - \frac{c_{k}^{\prime}}{d_{k}}| < 0.5K (n_{k+1}d_{k})^{-1}$.  Since $d_{k+1} < n_{k+1}d_{k}$, then $|r - \frac{c_{k}^{\prime}}{d_{k}}| < 0.5K (d_{k+1})^{-1}$.
Then for $k > k_1$,
\[
\left| r - \frac{c'_k}{d_k} \right| < 0.5K(d_{k+1})^{-1} \textrm{ and } 
\left| r - \frac{c'_{k+1}}{d_{k+1}} \right| < 0.5K(d_{k+2})^{-1} < 0.5K(d_{k+1})^{-1},
\]
which contradicts (\ref{finalbd2}) by the triangle inequality. Therefore, our original assumption is false and $X$ is weak mixing.

It remains only to show that the complexity function satisfies the claimed bounds.
Since $\len{p_1} = 0$ and by Remark \ref{ps}, $p_{k+1} = v_{k}^{m_{k}-1}p_{k}$, we have $\len{p_{k}} = \sum_{j=1}^{k-1} (m_{j}-1)\len{v_{j}}$ and therefore, since $n_{j} - m_{j} = m_{j}$,
\[
\sum_{j=1}^{k}(n_{j} - m_{j} - 1)\len{v_{j}} = \sum_{j=1}^{k}(m_{j}-1)\len{v_{j}} = (m_{k}-1)\len{v_{k}} + \len{p_{k}}.
\]
By Proposition \ref{sum2}, then
\begin{align*}
p(\len{s_{k}v_{k}^{2m_{k}-2}p_{k}}) = \len{s_{k}v_{k}^{2(m_{k}-1)}p_{k}} + (m_{k}-1)\len{v_{k}} + \len{p_{k}} + K
= 1.5\len{s_{k}v_{k}^{2m_{k}-2}p_{k}} - 0.5(\len{s_{k}} - \len{p_{k}}) + K.
\end{align*}
Since $\len{p_{k}} + \len{s_{k}} < 3\len{v_{k}}$ and $m_{k} \to \infty$, $\lim \frac{p(\len{s_{k}v_{k}^{2m_{k}-2}p_{k}})}{\len{s_{k}v_{k}^{2m_{k}-2}p_{k}}} = 1.5$.  Proposition \ref{sum2} implies that
the limsup of $\frac{p(q)}{q}$ %and of $p(q) - 1.5q$ 
is achieved along some subsequence of $\len{s_{k}v_{k}^{n_{k}-2}p_{k}}$, so $\limsup \frac{p(q)}{q} = 1.5$.
\end{proof}

\begin{remark}
The examples in Theorem \ref{example} also satisfy
$p(q) - 1.5q \to -\infty$ and $\liminf \frac{p(q)}{q} = 1$.  For any $f(q) \to \infty$, such a subshift exists which also satisfies $p(q) < q + f(q)$ infinitely often.
\end{remark}
\begin{proof}
By Remark \ref{ps}, $s_{k+1} = s_{k}v_{k+1}$, so we have $\len{s_{k}} - \len{p_{k}} \leq \len{s_{k}} - \len{v_{k}} = \len{s_{k-1}} \to \infty$ so $p(q) - 1.5q \to -\infty$.
By Proposition \ref{sum2},
\[
p(\len{v_{k}^{m_{k}-1}p_{k}}) = \len{v_{k}^{m_{k}-1}p_{k}} + \sum_{j=1}^{k-1} (n_{j} - m_{j} - 1)\len{v_{j}} +K = \len{v_{k}^{m_{k}-1}p_{k}} + \len{p_{k}} + K
\]
and $\len{p_{k}} < 3\len{v_{k}}$ so since $m_{k} \to \infty$, $\liminf \frac{p(q)}{q} = 1$.
Now let $f(q) \to \infty$ be arbitrary.  
For all $k$, if $v_{k}$ and $p_{k}$ are given, we can choose $b_{k} = m_k$ large enough so that $f((m_{k}-1)\len{v_{k}}+\len{p_k}) > \len{p_{k}} + K$, which implies that $p(\len{v_{k}^{m_{k}-1}p_{k}}) < \len{v_{k}^{m_{k}-1}p_{k}} + f(\len{v_{k}^{m_{k}-1}p_{k}})$.
\end{proof}

%By Proposition \ref{1.5}, as $n_{k} = 2m_{k}$ and $m_{k} \to \infty$, $\limsup \frac{p(q)}{q} = \frac{3}{2}$ and $p(q) - 1.5q \to -\infty$ and $\liminf \frac{p(q)}{q} = 1$.  For any $f(q) \to \infty$, we may choose $b_{k}$ large enough that Proposition \ref{1.5} gives $p(q) < q + f(q)$ infinitely often.  $X$ is minimal by construction so by \cite{boshernitzan}, $X$ is uniquely ergodic with unique measure $\mu$.

%\noindent
%\textbf{Conflicts of Interest:} none.\\
%
%\noindent
%\textbf{Financial Support:} The second author gratefully acknowledges the support of a Simons Foundation Collaboration Grant.

\vspace{-2em}
\dbibliography{DiscreteSpectrum}

\end{document}